\theoremstyle{plain}
    \newtheorem{theorem}{Theorem}[section]
    \newtheorem{lemma}[theorem]{Lemma}
    \newtheorem{corollary}[theorem]{Corollary}
\theoremstyle{definition}
    \newtheorem{definition}[theorem]{Definition}
\theoremstyle{remark}
\title{On the Minimum Area of Null Homotopies of Curves Traced Twice}
\author{Zipei Nie}
\address{Z. Nie: Department of Mathematics, Massachusetts Institute of Technology, Cambridge MA}
\email{zipei@mit.edu} 
\date{\today}
\begin{document}
\begin{abstract}We provide an efficient algorithm to compute the minimum area of a homotopy between two closed plane curves, given that they divide the plane into finite number of regions. For any positive real number $\varepsilon>0$, we construct a closed plane curve $\gamma$ such that the minimum area of a null homotopy of $2\cdot\gamma$ is less than $\varepsilon$ times that of $\gamma$. We also establish a lower bound on how complex a desired closed curve has to be.
\end{abstract}
\maketitle

\section{Introduction}
The problems related to the minimum area of a null homotopy of a closed curve traced multiple times have been studied for decades along with the Plateau's problem. In 1968, L. C. Young \cite{lyoung} constructed a closed curve $\gamma$ in the Euclidean space $\mathbb{R}^4$ such that the least area of a surface with boundary $2\cdot\gamma$ is strictly less than twice of that with boundary $\gamma$ in the context of integral current. Roughly speaking, it is not necessarily twice as hard to fill a curve traced twice as to fill the curve traced only once with a surface in $\mathbb{R}^4$. His example was inspired by the embedding of the Klein bottle into $\mathbb{R}^4$. In 1984, F. Morgan \cite{fmorgan} and B. White \cite{bwhite} generalized his example to the $n\cdot \gamma$ case for any integer $n\ge 2$ via different approaches from each other. 

Recently in 2013, R. Young \cite{ryoung} proved that for a $(d-1)$-cycle $T$ in $\mathbb{R}^n$, there exists a positive constant $c$ depending on $d$ and $n$ such that $\mathrm{FV}(T)\le c \mathrm{FV}(2\cdot T)$ where the filling volume $\mathrm{FV}(T)$ is defined as the infimum of the mass of a $d$-cycle $U$ such that $\partial U=T$. It is an inequality in the opposite direction to the previous constructions.

This paper concerns a homotopic analog of R. Young's work in the case where the dimension $d=2$. This case particularly interests us because the homotopy group $\pi_{n}(X)$ is commutative when $n>1$, but not necessarily so when $n=1$. In contrast to his result, we construct a closed curve $\gamma$ in $\mathbb{R}^2$, such that the minimum area of a null homotopy of $2\cdot\gamma$ is less than $\varepsilon$ times that of $\gamma$ for any positive constant $\varepsilon$ (see Theorem \ref{main-curve}). This construction was inspired by a highly twisted simple closed curve on the real projective plane we found during the research project.

In 2013, E. W. Chambers and Y. Wang \cite{sim} investigated the computation of the minimum area of a regular homotopy between two simple curves on $2$-manifolds with the same endpoints. In our paper, a different method allows us to compute the minimum area of a homotopy, not necessarily regular, between two closed plane curves, not necessarily simple, in polynomial time and space, given that they divide the plane into finite number of regions (see Theorem \ref{distance-complexity} and Theorem \ref{main-relation}). It is worth noting that the computation of the minimum area of a homotopy between two curves $\gamma_1$ and $\gamma_2$ with the same endpoints can be converted to the computation of the minimum area of a null homotopy of the closed curve $\gamma_1\cdot \overline{\gamma_2}$. The main obstacle when trying to generalize our computation to the case on $2$-manifolds is that we currently do not have an efficient algorithm to compute the cancellation norm under more general settings, as commented by M. Brandenbursky, \'S. R. Gal, J. K\k{e}dra and M. Marcinkowski \cite{norm}.

This paper is organized as follows.

In Section 2, we define the weighted cancellation norm and prove several properties of this pseudo-norm, which is a natural generalization of the cancellation norm defined by S. Gadgil \cite{rna} and M. Brandenbursky, \'S. R. Gal, J. K\k{e}dra and M. Marcinkowski \cite{norm}.

In Section 3, we define the weighted cancellation distance and compare it to the weighted cancellation distance.

In Section 4, we establish a relation between the weighted cancellation distance and the minimum area of homotopy between two closed plane curves.

In Section 5, we construct a closed plane curve that satisfies our conditions.

In Section 6, we provide a lower bound on how complex a desired closed curve has to be.

\section{Weighted Cancellation Norm}

In recent papers \cite{norm,rna}, the authors introduced an interesting conjugacy invariant norm on the free group. This norm could be generalized as follows without much efforts. For our convenience, we change the ordered pairs in the definition of a folding of a word in \cite{rna} into unordered pairs.

\begin{definition}
A symmetric set without identity $S$ is a set with a bijective function $\mathrm{inv}: S\rightarrow S$ with no fixed points such that $\mathrm{inv}(\mathrm{inv}(x))=x$ for any $x\in S$.
\end{definition}

Note that this definition is compatible with the definition of the symmetric set as a subset of a group, where we can take the function $\mathrm{inv}$ to be the inverse of a group element. By the axiom of choice, there exists a subset $T$ of $S$ such that $S$ is the union of the disjoint sets $T$ and $\mathrm{inv}(T)$. Conversely, any set $T$ induces a symmetric set without identity $S=T\sqcup T$ naturally. If we consider $S$ as a subset of the group $F_T$ freely generated by $T$, then it is a symmetric set as a subset of the group. 

\begin{definition}
We say that two disjoint pairs $\{i ,j\}$ and $\{k,m\}$ where $i<j$ and $k<m$, are linked if either $i<k<j<m$ or $k<i<m<j$.
\end{definition}

\begin{definition}
A folding of a word $w=l_1\cdots l_n$ over a symmetric set without identity is a collection of disjoint pairs $\mathcal{F}\subset \{ \{i,j\}|1\le i<j\le n\}$ such that for $\{i,j\}\in \mathcal{F}$ we have $l_i=\mathrm{inv}(l_j)$ and that any two pairs $\{i_1,j_1\}\in \mathcal{F}$ and $\{i_2,j_2\} \in \mathcal{F}$ are not linked.  
\end{definition}

\begin{definition}
We say a position $i$ ($1\le i\le n$) is unpaired in a folding $\mathcal{F}$ of a word $w=l_1\cdots l_n$ over a symmetric set without identity, if $\{i,j\}\not\in\mathcal{F}$ for any $1\le j\le n$.
   
\end{definition}

\begin{definition}
A weight function on a symmetric set without identity $S$ is a function $\mathrm{wt}:S\rightarrow \mathbb{R}^+\cup\{0\}$ such that $\mathrm{wt}(\mathrm{inv}(x))=\mathrm{wt}(x)$ for any $x\in S$. 
\end{definition}

In case the symmetric set without identity $S$ is the disjoint union of subsets $T$ and $\mathrm{inv}(T)$, a weight function on $S$ corresponds to a function from $T$ to $\mathbb{R}^+\cup\{0\}$ by restriction.

\begin{definition}The weighted cancellation norm of a word $w=l_1\cdots l_n$ over a symmetric set without identity $S$ with a weight function $\mathrm{wt}$ is defined as 
$$||w||:=\min_{\mathcal{F}}\sum_{i} \mathrm{wt}(l_i)$$
where $\mathcal{F}$ ranges over all foldings of $w$ and $i$ ranges over all unpaired positions in $\mathcal{F}$. 
\end{definition}

As a generalization of Lemma 2.J in \cite{norm}, the following theorem shows that there exists an efficient algorithm to compute the weighted cancellation norm of a word. 

\begin{theorem}\label{norm-complexity}
The weighted cancellation norm of a word $w=l_1\cdots l_n$ can be computed within time $O(n^3)$ and space $O(n^2)$.
\end{theorem}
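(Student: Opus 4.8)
The plan is to compute $\|w\|$ by dynamic programming over the contiguous subwords of $w$, in the spirit of the proof of Lemma 2.J of \cite{norm}. For $1\le i\le n$ and $i-1\le j\le n$ let $D(i,j)$ denote the weighted cancellation norm of the subword $l_il_{i+1}\cdots l_j$, with the convention $D(i,i-1)=0$ (the empty word). I claim that for $i\le j$,
$$D(i,j)=\min\Bigl(\mathrm{wt}(l_i)+D(i+1,j),\ \min_{\substack{i<k\le j\\ l_k=\mathrm{inv}(l_i)}}\bigl(D(i+1,k-1)+D(k+1,j)\bigr)\Bigr),$$
where the second minimum is read as $+\infty$ when there is no admissible $k$; the quantity we want is then $D(1,n)$.

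To justify the recurrence, fix a folding $\mathcal{F}$ of $l_i\cdots l_j$ realizing $D(i,j)$ and look at the position $i$. Either $i$ is unpaired, in which case deleting it leaves a folding of $l_{i+1}\cdots l_j$ and the first term applies; or $\{i,k\}\in\mathcal{F}$ for a unique $k$ with $i<k\le j$ (larger than $i$, since pairs are written as sets and $i$ is the least index present), forcing $l_k=\mathrm{inv}(l_i)$. In the latter case the non-linked condition forces every pair of $\mathcal{F}$ other than $\{i,k\}$ to lie entirely within $\{i+1,\dots,k-1\}$ or entirely within $\{k+1,\dots,j\}$: a pair with one endpoint strictly between $i$ and $k$ and the other in $\{k+1,\dots,j\}$ would be linked with $\{i,k\}$, and no pair may reuse the index $i$ or $k$. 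Hence restricting $\mathcal{F}$ to the two blocks gives foldings of $l_{i+1}\cdots l_{k-1}$ and of $l_{k+1}\cdots l_j$, and since paired positions contribute nothing, $D(i,j)\ge D(i+1,k-1)+D(k+1,j)$ for this $k$. Conversely, given foldings $\mathcal{F}_1$ of $l_{i+1}\cdots l_{k-1}$ and $\mathcal{F}_2$ of $l_{k+1}\cdots l_j$ with $l_k=\mathrm{inv}(l_i)$, the collection $\mathcal{F}_1\cup\mathcal{F}_2\cup\{\{i,k\}\}$ is again a folding of $l_i\cdots l_j$, because two pairs drawn from the two disjoint index-blocks separated by $k$ cannot be linked, nor can either be linked with $\{i,k\}$; the unpaired-at-$i$ case is reversible for the same trivial reason. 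The recurrence follows, and a short induction on $j-i$ shows that $D(i,j)$ equals the weighted cancellation norm of $l_i\cdots l_j$.

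For the complexity, tabulate the $O(n^2)$ values $D(i,j)$ in order of increasing length $j-i$. Computing one entry amounts to comparing $\mathrm{wt}(l_i)+D(i+1,j)$ with $D(i+1,k-1)+D(k+1,j)$ over the at most $n$ indices $k$, each candidate costing an $O(1)$ test $l_k=\mathrm{inv}(l_i)$ and two table look-ups; so $O(n)$ work per entry and $O(n^3)$ in total, with $O(n^2)$ space for the table. This gives the stated bounds.

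The complexity count is routine; the step that needs care is the structural claim in the middle paragraph, namely that a folding in which the leftmost index of a block is paired decomposes cleanly across its partner, and that independently optimal foldings of the two resulting sub-blocks recombine into a legal folding. Both directions are immediate once the definition of \emph{linked} is unwound, but they are precisely what guarantees that the naive $O(n)$-branching recursion has only $O(n^2)$ distinct subproblems rather than exponentially many.
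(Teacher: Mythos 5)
Your proposal is correct and follows essentially the same route as the paper: a dynamic program over contiguous subwords with the recurrence splitting on whether an extremal position is unpaired or paired across a matching letter, yielding $O(n^2)$ subproblems at $O(n)$ work each. The only difference is cosmetic --- you condition on the first letter of the subword where the paper conditions on the last --- and your justification of the decomposition and the $O(n^3)$ time, $O(n^2)$ space count matches the paper's argument.
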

\begin{proof}
For $n\ge 2$, consider a folding $\mathcal{F}$ of the word $w$. If $n$ is an unpaired position in the folding $\mathcal{F}$, then $\mathcal{F}$ is also a folding of the word $w'=l_1\cdots l_{n-1}$. Otherwise assume there exists an integer $k$ ($1\le k \le n-1$) such that $\{k,n\}\in \mathcal{F}$, then the set $\mathcal{F}_1=\{\{i,j\}| \{i,j\}\in \mathcal{F}\mbox{ and } i,j<k\}$ is a folding of the word $w_1=l_1\cdots l_{k-1}$ and the set $\mathcal{F}_2=\{\{i-k,j-k\}| \{i,j\}\in \mathcal{F}\mbox{ and } i,j>k\}$ is a folding of the word $w_2=l_{k+1}\cdots l_{n-1}$. Conversely, if we have a folding $\mathcal{F'}$ of the word $w'=l_1\cdots l_{n-1}$, then $\mathcal{F'}$ is also a folding of the word $w=l_1\cdots l_n$. And if we a folding $\mathcal{F}_1$ of the word $w_1=l_1\cdots l_{k-1}$ and a folding $\mathcal{F}_2$ of the word $w_2=l_{k+1}\cdots l_{n-1}$ and $l_k=\mathrm{inv}(l_n)$, then the set $\mathcal{F}=\mathcal{F}_1\cup\{\{i+k,j+k\}|\{i,j\}\in \mathcal{F}_2\} \cup \{\{k,n\}\}$ is a folding of the word $w$. Therefore $$||w||=\min \{||w'||+\mathrm{wt}(l_n),\min_k\{||w_1||+||w_2||\}\}$$ where $w'=l_1\cdots l_{n-1}$, $w_1=l_1\cdots l_{k-1}$, $w_2=l_{k+1} \cdots l_{n-1}$ and $k$ ranges over all integers between $1$ and $n-1$ with $l_k=\mathrm{inv}(l_n)$. 

Let $m$ be an integer with $1\le m\le n$. If we know the weighted cancellation norms of the words of the form $l_i\cdots l_j$ where $1\le i\le j\le m-1$, then the weighted cancellation norms of the words of the form $l_i\cdots l_m$ where $1\le i\le m$ can be computed within time $O(m^2)$. Therefore the weighted cancellation norm of $w$ can be computed within time $O(n^3)$ and space $O(n^2)$.
\end{proof}

The following theorem checks the well-definedness of Definition \ref{norm-definition}.

\begin{theorem}\label{norm-defining-property} Suppose $S$ is a symmetric set without identity with a weight function $\mathrm{wt}$, and $S$ is the union of disjoint subsets $T$ and $\mathrm{inv}(T)$. Let $F_T$ be the group freely generated by $T$. Then for an element $g\in F_T$, the formula $$||g||:=||w||$$
where $w$ is a word over the alphabet $S$ representing $g$ gives a well-defined conjugacy invariant pseudo-norm on $F_T$.
\end{theorem}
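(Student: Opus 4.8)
The plan is to split the statement into well-definedness of $||g||$ and then the three axioms of a conjugacy-invariant pseudo-norm. For well-definedness, recall that two words over $S$ represent the same element of $F_T$ precisely when they are connected by a finite chain of insertions and deletions of factors $x\,\mathrm{inv}(x)$ with $x\in S$, so it suffices to prove $||uv||=||u\,x\,\mathrm{inv}(x)\,v||$ for all words $u,v$ over $S$ and all $x\in S$. The inequality $||u\,x\,\mathrm{inv}(x)\,v||\le||uv||$ is easy: transport an optimal folding of $uv$ to the $uv$-positions inside $u\,x\,\mathrm{inv}(x)\,v$ and adjoin the pair formed by the two inserted adjacent positions, noting that a pair of adjacent positions is never linked with any other pair and contributes $0$ to the cost. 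For the reverse inequality I would start from a folding $\mathcal{F}$ of $u\,x\,\mathrm{inv}(x)\,v$, let $p,p+1$ be the positions of the two inserted letters, and produce a folding of $uv$ of no larger cost by cases: if $p$ and $p+1$ are both unpaired, delete them (the cost drops by $2\,\mathrm{wt}(x)$); if $\{p,p+1\}\in\mathcal{F}$, delete that pair (cost unchanged); if exactly one of $p,p+1$ is matched, say to $q$, delete both positions and make $q$ unpaired (the cost changes by $-\mathrm{wt}(x)\le 0$, since $\mathrm{wt}(l_q)=\mathrm{wt}(x)$); and if $p$ is matched to $q$ and $p+1$ to $r$, then $l_q=\mathrm{inv}(l_r)$ and one deletes $p,p+1$ and re-pairs $q$ with $r$ at no change in cost. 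The one place I expect genuine (if routine) work is checking that this last re-pairing keeps the folding non-crossing: since $\mathcal{F}$ is non-crossing and $p,p+1$ are adjacent, the arcs $\{q,p\}$ and $\{p+1,r\}$ are either side by side or nested, and a short case analysis on the order of $q,r$ shows that every remaining pair of $\mathcal{F}$ sits entirely inside one of these two arcs or entirely outside both, hence remains unlinked from the new arc $\{q,r\}$.

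Given well-definedness, the pseudo-norm axioms are straightforward. Non-negativity holds because $||g||$ is a sum of values of $\mathrm{wt}\ge 0$, and $||e||=0$ via the empty word. For $||g^{-1}||=||g||$, note that if $w=l_1\cdots l_n$ represents $g$ then $\mathrm{inv}(l_n)\cdots\mathrm{inv}(l_1)$ represents $g^{-1}$, and $\{i,j\}\mapsto\{n+1-j,\,n+1-i\}$ is a bijection between the foldings of the two words preserving linking, unpaired positions, and (since $\mathrm{wt}\circ\mathrm{inv}=\mathrm{wt}$) total cost. For subadditivity, if $w$ represents $g$ and $w'$ represents $h$ then $ww'$ represents $gh$, and the union of a folding of $w$ with a folding of $w'$ shifted by $|w|$ is a folding of $ww'$ — two pairs with disjoint index ranges cannot be linked — of cost $||w||+||w'||$; hence $||gh||\le||g||+||h||$.

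For conjugacy invariance I would first prove that $||\cdot||$ is invariant under the cyclic rotation $l_1 l_2\cdots l_n\mapsto l_2\cdots l_n l_1$ of words. The position map $1\mapsto n$, $i\mapsto i-1\ (i\ge 2)$ sends foldings to foldings: if position $1$ is unpaired this is clear, and if $\{1,k\}\in\mathcal{F}$ then the non-crossing condition forces every other pair to lie in $[2,k-1]$ or in $[k+1,n]$, from which one checks that the image pair $\{k-1,n\}$ is unlinked from every image pair; the map also preserves cost, so rotation by one preserves $||\cdot||$, and hence so does any cyclic rotation. Finally, if $g'=hgh^{-1}$, write $h=s_1\cdots s_m$ and $g=t_1\cdots t_k$ as words over $S$ and set $w'=s_1\cdots s_m\,t_1\cdots t_k\,\mathrm{inv}(s_m)\cdots\mathrm{inv}(s_1)$, a word representing $g'$; rotating its last $m$ letters to the front produces a word that freely reduces to $t_1\cdots t_k$ and so represents $g$, whence $||g'||=||w'||=||g||$ by well-definedness together with cyclic invariance. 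Assembling these three ingredients would complete the proof, the only delicate point being the non-crossing verification in the last case of the elementary move.
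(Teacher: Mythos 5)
Your proposal is correct and follows essentially the same route as the paper: well-definedness is reduced to insertion of a cancelling pair $x\,\mathrm{inv}(x)$ with the same case analysis on how the two inserted positions sit in a folding (including the re-pairing of their partners in the doubly-matched case), subadditivity comes from taking unions of foldings, the inverse property from position reversal, and conjugacy invariance from cyclic rotation plus well-definedness. The only differences are cosmetic --- you rotate one letter at a time rather than by a whole block as the paper does, and in the ``exactly one of $p,p+1$ matched'' case the cost change is actually $0$ rather than $-\mathrm{wt}(x)$ (the deleted unpaired inserted letter and the newly unpaired partner $q$ both have weight $\mathrm{wt}(x)$), which still yields the required inequality.
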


\begin{proof}
First, we prove if two words represents the same element in the group $F_T$, then they have the same weighted cancellation norms. Since the group $F_T$ is free, it suffices to prove the words $w=l_1\cdots l_n$ and $w'=l_1\cdots l_t \cdot l\cdot  \mathrm{inv}(l)\cdot l_{t+1}\cdots l_n$ over the alphabet $S$ have the same weighted cancellation norms. Let $\phi:\{1,\cdots,n\}\rightarrow\{1,\cdots,n+2\}$ be an auxiliary function defined by $$\phi(i)=\begin{cases}i&\mbox{, if }1\le i\le t \\i+2&\mbox{, if }t+1\le i\le n .\end{cases}$$ On the one hand, for any folding $\mathcal{F}$ of the word $w$, the set $\mathcal{F'}=\{\{\phi(i),\phi(j)\}|\{i,j\}\in\mathcal{F}\}\cup\{\{t+1,t+2\}\}$ is a folding of the word $w'$. Then the sums of weights of the letters at unpaired positions are the same for the folding $\mathcal{F}$ of the word $w$ and the folding $\mathcal{F'}$ of the word $w'$, hence $||w||\ge||w'||$. On the other hand, let $\mathcal{F'}$ be a folding of the word $w'$. If $t+1$ or $t+2$ is an unpaired position in the folding $\mathcal{F'}$ or $\{t+1,t+2\}\in \mathcal{F'}$, then the set $\mathcal{F}=\{\{i,j\}|\{\phi(i),\phi(j)\}\in\mathcal{F'}\}$ is a folding of the word $w$ with the sum of weights of the letters at unpaired positions no more than that for the folding $\mathcal{F'}$. Otherwise assume that $\{t+1,j_1\}\in \mathcal{F'}$ and $\{t+2,j_2\}\in\mathcal{F'}$, then the set $\mathcal{F}=\{\{i,j\}|\{\phi(i),\phi(j) \}\in\mathcal{F'}\}\cup\{\{j_1,j_2\}\}$ is a folding of the word $w$ with the sum of weights of the letters at unpaired positions the same as that for the folding $\mathcal{F'}$. Therefore $||w||\le ||w'||$. 

Second, we prove the weighted cancellation norm of the concatenation $w_1w_2$ of two words $w_1=l_1\cdots l_n$ and $w_2=l'_1\cdots l'_m$ over the alphabet $S$ is at most $||w_1||+||w_2||$. Let $\mathcal{F}_1$ (resp., $\mathcal{F}_2$) be a folding of the word $w_1$ (resp., $w_2$). Then the set $\mathcal{F}=\mathcal{F}_1\cup\{\{n+i,n+j\}|\{i,j\}\in \mathcal{F}_2\}$ is a folding of $w$ with the sum of weights of the letters at unpaired positions equal to the sum of those for the foldings $\mathcal{F}_1$ and $\mathcal{F}_2$. Therefore $||w_1 w_2||\le ||w_1||+||w_2||$.

Then, we prove the weighted cancellation norms of the word $w=l_1\cdots l_n$ and the word $w'=\mathrm{inv}(l_n)\cdots \mathrm{inv}(l_1)$ over the alphabet $S$ are the same. Let $\mathcal{F}$ be a folding of $w$, then $\mathcal{F'}=\{\{n-i+1,n-j+1\}|\{i,j\}\in\mathcal{F}\}$ is a folding of $w'$ with the same sum of weights of the letters at unpaired positions, so $||w||\ge ||w'||$. Similarly we have $||w||\le||w'||$.

Last, we prove for two words $w_1=l_1\cdots l_n$ and $w_2=l'_1\cdots l'_m$ over the alphabet $S$, we have $||w_1w_2||=||w_2w_1||$. Let $\phi:\{1,\cdots, m+n\}\rightarrow\{1,\cdots,m+n\}$ be auxiliary function defined by $$\phi(i)=\begin{cases} i+m &\mbox{, if }1\le i\le n \\ i-n &\mbox{, if } n+1\le i \le m+n.\end{cases}$$ For any folding $\mathcal{F}$ of the word $w_1 w_2$, the set $\mathcal{F'}=\{\{\phi(i),\phi(j)\}|\{i,j\}\in \mathcal{F}\}$ is a folding of the word $w_2w_1$. So $||w_1w_2||\ge ||w_2w_1||$. Similarly we have $||w_1w_2||\le ||w_2w_1||$.
\end{proof}

Now we can speak about the weighted cancellation norm on the free group. If we have $\mathrm{wt}(x)=1$ for any $x\in S$, then the weighted cancellation norm degeneralizes to the cancellation norm on free groups defined as in \cite{norm}.

\begin{definition}\label{norm-definition}
Given a set $T$ and a function $\mathrm{wt}: T\rightarrow \mathbb{R}^+\cup\{0\}$. The weighted cancellation norm on the group $F_T$ freely generated by $T$ is defined as $$||g||:=||w||$$
where $g$ is any element in $F_T$ represented by a word $w$ over the alphabet $S$, and $S=T\sqcup T$ is the symmetric set without identity induced by $T$ with a weight function induced by $\mathrm{wt}$.
\end{definition}

Our next theorem provides us with an equivalent description of the weighted cancellation norms on free groups.  

\begin{theorem}\label{norm-equi-form}
Given a set $T$ and a function $\mathrm{wt}: T\rightarrow \mathbb{R}^+\cup \{0\}$. Let $F_T$ be the group freely generated by $T$ and $||\cdot||$ be the weighted cancellation norm on it. Then for $g\in F_T$, $||g||$ is the smallest real number $x$ such that there exist a nonnegative integer $m$, elements $v_1,\cdots, v_m\in F_T$ and $t_1, \cdots, t_m\in T\subseteq F_T$ such that $g=v_1\cdots v_m$ where $v_j$ is a conjugate of $t_j$ or $t_j^{-1}$ for each $j=1,\cdots, m$ and $\sum_{j=1}^{m} \mathrm{wt}(t_j)=x$.
\end{theorem}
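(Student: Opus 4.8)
The plan is to prove the two inequalities that pin down the claimed quantity: writing $X$ for the set of reals $x$ for which a decomposition $g=v_1\cdots v_m$ as in the statement exists with $\sum_{j=1}^m\mathrm{wt}(t_j)=x$, I will show that $\|g\|\in X$ and that $\|g\|\le x$ for every $x\in X$. Since for a fixed representing word of length $n$ there are only finitely many foldings, $\|g\|=\|w\|$ is attained by some optimal folding, so the first fact makes sense, and together the two facts say exactly that $\|g\|=\min X$.

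For $\|g\|\in X$ I would isolate the following structural statement and prove it by strong induction on the length $n$: \emph{if $w=l_1\cdots l_n$ is any word over $S$ representing $g\in F_T$ and $\mathcal F$ is any folding of $w$ with unpaired positions $i_1<\cdots<i_k$, then $g=v_1\cdots v_k$ in $F_T$ with each $v_j$ a conjugate of $l_{i_j}$.} The base case $n=0$ is the empty product. For the inductive step I branch exactly as in the proof of Theorem~\ref{norm-complexity}. If position $n$ is unpaired, then $\mathcal F$ is a folding of $w'=l_1\cdots l_{n-1}$, which represents $g\,l_n^{-1}$; apply the hypothesis to $w'$ and multiply on the right by $l_n$, which is a conjugate of itself. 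If $\{k,n\}\in\mathcal F$, then the non-crossing condition forces every other pair of $\mathcal F$ to lie wholly inside $\{1,\dots,k-1\}$ or wholly inside $\{k+1,\dots,n-1\}$ (a direct check against the definition of ``linked'', using that the pairs of $\mathcal F$ are disjoint, so no other pair contains $k$ or $n$); hence $\mathcal F$ splits into foldings $\mathcal F_1,\mathcal F_2$ of $w_1=l_1\cdots l_{k-1}$ and $w_2=l_{k+1}\cdots l_{n-1}$, and since $l_n=\mathrm{inv}(l_k)$ in $F_T$ we get $g=g_1\cdot\bigl(l_k g_2 l_k^{-1}\bigr)$ with $g_1,g_2$ represented by $w_1,w_2$. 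Applying the hypothesis to $w_1$ and $w_2$ and noting that conjugating each factor of the decomposition of $g_2$ by the single letter $l_k$ keeps it a conjugate of the same generator finishes the induction. Applying this to a word $w$ with $\|w\|=\|g\|$ and an optimal folding, and using $\mathrm{wt}(\mathrm{inv}(x))=\mathrm{wt}(x)$ so that $\mathrm{wt}(l_{i_j})=\mathrm{wt}(t_j)$, yields a decomposition of $g$ with $\sum\mathrm{wt}(t_j)=\|g\|$, i.e.\ $\|g\|\in X$.

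For $\|g\|\le x$ whenever $x\in X$: given $g=v_1\cdots v_m$ with $v_j=u_j t_j^{\epsilon_j}u_j^{-1}$, choose any word $a_{j,1}\cdots a_{j,p_j}$ over $S$ representing $u_j$, so that $v_j$ is represented by the palindromic block $W_j=a_{j,1}\cdots a_{j,p_j}\,t_j^{\epsilon_j}\,\mathrm{inv}(a_{j,p_j})\cdots\mathrm{inv}(a_{j,1})$, and let $w=W_1\cdots W_m$, a word representing $g$. Inside each block pair $a_{j,\ell}$ with its mirror letter $\mathrm{inv}(a_{j,\ell})$; this is a nested, hence non-crossing, matching within the block, and since the blocks occupy disjoint intervals the union of these matchings is a folding of $w$ whose only unpaired positions are the block centers, with total unpaired weight $\sum_j\mathrm{wt}(t_j^{\epsilon_j})=\sum_j\mathrm{wt}(t_j)=x$. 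Hence $\|g\|=\|w\|\le x$.

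The step that needs the most care—though none of it is genuinely hard—is the inductive structural claim: one must verify that a matched pair $\{k,n\}$ really does split $\mathcal F$ into independent foldings of the two sub-words, and keep the conjugation bookkeeping straight (a product of conjugates of generators, conjugated by one letter, is again a product of conjugates of the same generators). Conceptually the whole proof is the observation that, for a word in a free group, a non-crossing partial cancellation of inverse pairs is the same data as an expression of the word as an ordered product of conjugates of the surviving letters, with the folding's unpaired-weight equal to the decomposition's total weight.
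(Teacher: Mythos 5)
Your proof is correct, and while it establishes the same two inequalities as the paper's proof, both halves are executed by a genuinely different (more explicit and self-contained) route. For the bound $||g||\le x$, the paper simply invokes Theorem \ref{norm-defining-property}: by conjugacy invariance each factor satisfies $||v_j||=\mathrm{wt}(t_j)$, and subadditivity gives $x=\sum_j||v_j||\ge||v_1\cdots v_m||=||g||$; you instead build the explicit word $W_1\cdots W_m$ representing $g$ together with a nested, block-by-block folding whose unpaired weight is $x$, which uses only the well-definedness $||g||=||w||$ and avoids the pseudo-norm machinery. For the reverse direction, the paper takes an optimal folding, asserts that $g$ can be written as $h_0s_1h_1\cdots s_mh_m$ with $h_0h_1\cdots h_m=1$ (the matched letters cancel), and then obtains the decomposition by the telescoping choice $v_j=(h_0\cdots h_{j-1})s_j(h_0\cdots h_{j-1})^{-1}$; your inductive structural lemma, which splits at the last letter exactly as in the recursion of Theorem \ref{norm-complexity}, produces the ordered product of conjugates of the unpaired letters directly, and in doing so supplies the detail the paper leaves implicit, namely why the non-crossing matching of inverse pairs forces the matched part to cancel. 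The trade-off is that your argument re-derives, in special cases, facts already available from Theorems \ref{norm-defining-property} and \ref{norm-complexity}, so it is longer, but it is also closer to the combinatorial content of foldings and makes the equivalence (non-crossing partial cancellation $=$ expression as a product of conjugates of the surviving letters) explicit.
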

\begin{proof}
On one hand, assume we have such a decomposition of $g$ as in the statement. Then as a word over the symmetric set without identity induced by $T$, the weighted cancellation norm of $t_j$ or $\mathrm{inv}(t_j)$ is equal to $\mathrm{wt}(t_j)$ for $j=1,\cdots, m$. So by Theorem \ref{norm-defining-property}, we have $x=\sum_{j=1}^{m} \mathrm{wt}(t_j)=\sum_{j=1}^m||v_j||\ge ||v_1\cdots v_m||=||g||$.

On the other hand, by definition there exists a folding $\mathcal{F}$ of the word representing $g$ with the sum of weights of the letters at unpaired positions equal to $||g||$. So $g$ can be written as the product $h_0 s_1 h_1 s_2\cdots h_m$ of the elements $h_0, h_1,\cdots, h_m, s_1,\cdots, s_m$ in $F_T$ where $m$ is a nonnegative integer and $s_j$ is equal to $t_j$ or $t_j^{-1}$ for some $t_j\in T\subseteq F_T$ for each $j=1,\cdots, m$ such that $h_0 h_1\cdots h_m=1$ and $\sum_{j=1}^m \mathrm{wt}(t_j)=||g||$. Let $v_j=(h_0\cdots h_{j-1})s_j(h_0\cdots h_{j-1})^{-1}$ for $j=1\cdots m$, then $v_1\cdots v_m= h_0s_1h_1 s_2\cdots s_{m} (h_0\cdots h_{j-1})^{-1}=g(h_0\cdots h_m)^{-1}=g$.
\end{proof}

\section{Weighted Cancellation Distance}

This section is devoted to the concept of the weighted cancellation distance, which is a useful analog of the weighted cancellation norm. 

\begin{definition}
Let $w_1=l_1\cdots l_n$ and $w_2=l'_1\cdots l'_m$ be two words over a symmetric set without identity. A mixed folding of the words $w_1$ and $w_2$ is a tuple $(\mathcal{F},p,q)$ where $p$ ($0\le p \le n-1$) and $q$ ($0\le q \le m-1$) are integers and $\mathcal{F}$ is a folding of the word $w=l_{p+1}\cdots l_n \cdot l_1\cdots l_p\cdot \mathrm{inv}(l'_{q})\cdots \mathrm{inv}(l'_1)\cdot \mathrm{inv}(l'_m)\cdots \mathrm{inv}(l'_{q+1})$.  
\end{definition}

\begin{definition}
Let $w_1=l_1\cdots l_n$ and $w_2=l'_1\cdots l'_m$ be two words over a symmetric set without identity. An unpaired position in a mixed folding $(\mathcal{F},p,q)$ of the words $w_1$ and $w_2$ in the first word is an integer $i$ ($1\le i \le n$) such that $\phi_1(i)$ is an unpaired position in the folding $\mathcal{F}$ of the word $w=l_{p+1}\cdots l_n \cdot l_1\cdots l_p\cdot \mathrm{inv}(l'_{q})\cdots \mathrm{inv}(l'_1)\cdot \mathrm{inv}(l'_m)\cdots \mathrm{inv}(l'_{q+1})$ where $$\phi_1(i)=\begin{cases}n-p+i &\mbox{ , if }1\le i\le p\\i-p &\mbox{ , if } p+1\le i\le n \end{cases}$$ is an auxiliary function. An unpaired position in the second word is an integer $j$ ($1\le j \le m$) such that $\phi_2(j)$ is an unpaired position in the folding $\mathcal{F}$ of the word $w$ where $$\phi_2(j)=\begin{cases}n+q-j+1 &\mbox{ , if }1\le j\le q\\m+n+q-j+1&\mbox{ , if } q+1\le j\le m \end{cases}$$ is an auxiliary function.
\end{definition}

\begin{definition}
The weighted cancellation distance between two words $w_1=l_1\cdots l_n$ and $w_2=l'_1\cdots l'_m$ over a symmetric set without identity $S$ with a weight function $\mathrm{wt}$ is defined as $$d(w_1,w_2):=\min_{(\mathcal{F},p,q)}\{\sum_i \mathrm{wt}(l_i)+\sum_j \mathrm{wt}(l'_j)\}$$ where $(\mathcal{F},p,q)$ ranges over all mixed foldings of $w_1$ and $w_2$ and $i$ (resp., $j$) ranges over all unpaired positions in the mixed folding $(\mathcal{F},p,q)$ in the first (resp., second) word. 
\end{definition}

\begin{theorem}\label{distance-complexity}
The weighted cancellation distance between two words $w_1=l_1\cdots l_n$ and $w_2=l'_1\cdots l'_m$ can be computed within time $O(N^5)$ and space $O(N^2)$, where $N=\max\{n,m\}$.
\end{theorem}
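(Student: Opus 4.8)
The plan is to mimic the dynamic-programming argument of Theorem~\ref{norm-complexity}, but now applied to the family of cyclic-and-reversed words that appear in the definition of a mixed folding. Fix $w_1 = l_1\cdots l_n$ and $w_2 = l'_1\cdots l'_m$. For each choice of the rotation parameters $p$ ($0\le p\le n-1$) and $q$ ($0\le q\le m-1$), the quantity we must minimize over foldings $\mathcal{F}$ is exactly the weighted cancellation norm of the single word
$$w^{(p,q)} = l_{p+1}\cdots l_n\, l_1\cdots l_p\, \mathrm{inv}(l'_q)\cdots \mathrm{inv}(l'_1)\, \mathrm{inv}(l'_m)\cdots \mathrm{inv}(l'_{q+1}),$$
a word of length $n+m=:L\le 2N$. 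So $d(w_1,w_2) = \min_{p,q}\lVert w^{(p,q)}\rVert$. A naive bound is $O(nm)$ choices times $O(L^3)$ per norm computation, i.e.\ $O(N^5)$ time and $O(N^2)$ space; this already suffices, and the bulk of the write-up is just to verify the bookkeeping.

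First I would set up a single master array indexed by substrings of the ``doubled and reflected'' word. Concretely, form the word $u = l_1\cdots l_n\, l_1\cdots l_n\, \mathrm{inv}(l'_m)\cdots \mathrm{inv}(l'_1)\, \mathrm{inv}(l'_m)\cdots \mathrm{inv}(l'_1)$ of length $2L$, and observe that for every $(p,q)$ the word $w^{(p,q)}$ is a \emph{subword} (contiguous block) of $u$ after an appropriate index shift: the first $n$ letters come from the window $[p+1,\,p+n]$ of the doubled copy of $w_1$, and the last $m$ letters come from a length-$m$ window of the doubled reflected copy of $w_2$. The key structural point is that $w^{(p,q)}$ is not literally a contiguous subword of $u$ — it is the concatenation of one contiguous window from the $w_1$-part and one contiguous window from the $w_2$-part. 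So I would instead run the recursion of Theorem~\ref{norm-complexity} to precompute $\lVert v\rVert$ for every contiguous subword $v$ of the doubled word $l_1\cdots l_n\, l_1\cdots l_n$ and for every contiguous subword of $\mathrm{inv}(l'_m)\cdots\mathrm{inv}(l'_1)\,\mathrm{inv}(l'_m)\cdots\mathrm{inv}(l'_1)$; there are $O(N^2)$ such subwords on each side and the precomputation costs $O(N^3)$ by the same argument as before.

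Next, for a fixed $(p,q)$, I compute $\lVert w^{(p,q)}\rVert$ by the splitting recurrence from Theorem~\ref{norm-complexity}: $\lVert v\rVert = \min\{\lVert v'\rVert + \mathrm{wt}(\text{last letter}),\ \min_k \lVert v_1\rVert + \lVert v_2\rVert\}$, where $v'$ drops the last letter and the inner minimum splits at a position matching the last letter. Because the last letter of $w^{(p,q)}$ lies in the $w_2$-block, the tail $v'$ and the right halves $v_2$ are always subwords of the (fixed, precomputed) $w_2$-block, while the left halves $v_1$ are either subwords of the $w_1$-block (precomputed) or subwords of $w^{(p,q)}$ that straddle the junction. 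Handling the straddling subwords forces a second, inner dynamic program over the at most $n+m \le 2N$ prefixes of $w^{(p,q)}$, costing $O(L^2)$ per prefix and hence $O(L^3)=O(N^3)$ per pair $(p,q)$; summing over the $O(N^2)$ pairs gives the claimed $O(N^5)$ time. Space is dominated by the $O(N^2)$ subword table, reused across all $(p,q)$, so it stays $O(N^2)$.

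The main obstacle is precisely this junction bookkeeping: a folding of $w^{(p,q)}$ may pair a position in the $w_1$-block with a position in the $w_2$-block, so one cannot simply combine precomputed norms of the two blocks — the split position $k$ in the recurrence can sit on either side of the junction, and one must argue that the non-linking condition still lets the recursion decompose cleanly (this is where the unordered-pair, non-crossing structure is essential). Verifying that the recursion remains valid across the junction, and that each of the $O(N^2)$ prefixes of each $w^{(p,q)}$ is processed in $O(N^2)$ time without recomputing block-internal norms, is the only delicate part; everything else is the routine transcription of the proof of Theorem~\ref{norm-complexity}.
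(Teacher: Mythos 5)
Your proposal is correct and its first paragraph is exactly the paper's proof: write $d(w_1,w_2)=\min_{p,q}\lVert w^{(p,q)}\rVert$ over the $nm\le N^2$ rotation pairs and invoke Theorem~\ref{norm-complexity} on each word of length $n+m\le 2N$, giving $O(N^5)$ time and $O(N^2)$ reusable space. The subsequent machinery about doubled words, precomputed subword tables, and junction bookkeeping is unnecessary for the stated bounds, as you yourself note that the naive count already suffices.
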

\begin{proof}
By definition, we know 
\begin{equation*} 
\begin{aligned} 
d(w_1,w_2)&=\min_{(\mathcal{F},p,q)}\{\sum_i \mathrm{wt}(l_i)+\sum_j \mathrm{wt}(l'_j)\}\\ 
&=\min_{p}\min_{q}\min_{\mathcal{F}}\{\sum_i \mathrm{wt}(l_i)+\sum_j \mathrm{wt}(l'_j)\}\\
&=\min_p\min_q ||w_{p,q}||
\end{aligned}
\end{equation*}
where $(\mathcal{F},p,q)$ ranges over all mixed foldings of $w_1$ and $w_2$, and $i$ (resp., $j$) ranges over the unpaired positions in $(\mathcal{F},p,q)$ in the first (resp., second) word, and $w_{p,q}=l_{p+1}\cdots l_n \cdot l_1\cdots l_p\cdot \mathrm{inv}(l'_{q})\cdots \mathrm{inv}(l'_1)\cdot \mathrm{inv}(l'_m)\cdots \mathrm{inv}(l'_{q+1})$ is a word of length $n+m\le 2N$. Since there are $mn\le N^2$ possible pairs $(p,q)$, by Theorem \ref{norm-complexity}, we can compute $d(w_1,w_2)$ within time $O(N^5)$ and space $O(N^2)$.
\end{proof}

The following theorem shows that the analog of Definition \ref{norm-definition} in the weighted cancellation distance case can be concisely stated as in Definition \ref{distance-definition}.

\begin{theorem}\label{distance-defining-property}
Suppose $S$ is a symmetric set without identity with a weight function $\mathrm{wt}$, and $S$ is the union of disjoint subsets $T$ and $\mathrm{inv}(T)$. Let $F_T$ be the group freely generated by $T$. Then for two elements $g_1,g_2\in F_T$, the formula $$d(g_1,g_2):=d(w_1, w_2)$$
where $w_1$ (resp., $w_2$) is a word over the alphabet $S$ representing $g_1$ (resp., $g_2$) gives a well-defined pseudo-metric on the group $F_T$, and the following equality $$d(g_1,g_2)=\min_{u\in F_T}||ug_1u^{-1}g_2^{-1}||$$ holds for any $g_1,g_2\in F_T$.
\end{theorem}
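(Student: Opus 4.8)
The plan is to prove the displayed formula $d(g_1,g_2)=\min_{u\in F_T}||ug_1u^{-1}g_2^{-1}||$ first; once this is in hand, all the other assertions follow quickly. Indeed, as soon as $d(g_1,g_2)$ is shown to equal this minimum --- a quantity visibly depending only on the group elements $g_1,g_2$ --- it is independent of the chosen representatives $w_1,w_2$, so $d$ is well defined on $F_T$, and the pseudo-metric axioms can be read off from the properties of $||\cdot||$ supplied by Theorem~\ref{norm-defining-property}: subadditivity, invariance under inversion, conjugacy invariance, and $||1||=0$.

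I would begin by rewriting $d(w_1,w_2)$ group-theoretically. Exactly as in the proof of Theorem~\ref{distance-complexity}, $d(w_1,w_2)=\min_{p,q}||w_{p,q}||$, where $w_{p,q}=l_{p+1}\cdots l_n l_1\cdots l_p\cdot\mathrm{inv}(l'_q)\cdots\mathrm{inv}(l'_1)\cdot\mathrm{inv}(l'_m)\cdots\mathrm{inv}(l'_{q+1})$. Writing $A$ for the length-$p$ prefix of $w_1$ and $C$ for the length-$q$ prefix of $w_2$, one checks directly that $w_{p,q}$ represents $[A]^{-1}g_1[A]\cdot[C]^{-1}g_2^{-1}[C]$ in $F_T$; conjugating by $[C]$ and using Theorem~\ref{norm-defining-property}, $||w_{p,q}||=||ug_1u^{-1}g_2^{-1}||$ with $u=[C][A]^{-1}$. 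Thus $d(w_1,w_2)$ is a minimum of finitely many numbers of the form $||ug_1u^{-1}g_2^{-1}||$, and in particular $d(w_1,w_2)\ge\inf_{u\in F_T}||ug_1u^{-1}g_2^{-1}||$.

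The reverse inequality, $d(w_1,w_2)\le||ug_1u^{-1}g_2^{-1}||$ for every $u\in F_T$, is the crux. Fixing a word $\rho$ over $S$ representing $u$, Theorem~\ref{norm-defining-property} gives $||ug_1u^{-1}g_2^{-1}||=||W||$ with $W=\rho\,w_1\,\overline{\rho}\,\overline{w_2}$, where $\overline{v}$ is the reverse of $v$ with each letter replaced by its image under $\mathrm{inv}$. I would prove, by induction on $|\rho|$ --- with the inductive statement quantified over all $\rho,w_1,w_2$ --- that every folding $\mathcal{G}$ of $\rho\,w_1\,\overline{\rho}\,\overline{w_2}$ induces a mixed folding of $w_1$ and $w_2$ of total weight at most that of $\mathcal{G}$; applying this to an optimal $\mathcal{G}$ for the given triple yields $d(w_1,w_2)\le||W||$. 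The base case $\rho$ empty is trivial, since $\rho\,w_1\,\overline{\rho}\,\overline{w_2}=w_1\overline{w_2}=w_{0,0}$ and any folding of it is already a mixed folding with $p=q=0$. For the inductive step, consider the two innermost $\rho$-letters in $W$: the last letter $s$ of $\rho$, sitting just left of the $w_1$-block, and its mirror $\mathrm{inv}(s)$, just right of it. If $\mathcal{G}$ pairs these two with each other, the non-crossing condition forces the $w_1$-block to fold internally, and combining that internal folding of $w_1$ with the mixed folding produced by the inductive hypothesis applied to $\rho$ minus its last letter, the empty word, and $w_2$, gives the desired mixed folding of $w_1$ and $w_2$. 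If $\mathcal{G}$ leaves both of them unpaired, deleting them and dropping the last letter of $\rho$ reduces to a strictly shorter instance, and the inductive hypothesis applies. The remaining cases are those in which at least one of $s,\mathrm{inv}(s)$ is paired with a letter of $w_1$, of $w_2$, of $\rho$, or of $\overline{\rho}$; here one uses the non-crossing structure to absorb the offending chord into a suitable cyclic rotation of $w_1$ or of $w_2$, shifting the cut parameters $p,q$ accordingly, or to shorten $\rho$, in either case without increasing the total weight. Together with the previous paragraph this establishes the formula, and in particular shows the infimum is attained, so we may write $\min$.

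I expect this last case analysis to be the main obstacle: keeping track of how the non-crossing chords incident to the $\rho$- and $\overline{\rho}$-letters interact with the cut positions $p$ and $q$ is delicate, and it is exactly where the cyclic structure built into the notion of a mixed folding must be matched against conjugation by an arbitrary $u$. Granting the formula, the remaining claims are routine. Well-definedness of $d(g_1,g_2)$ is clear since $\min_u||ug_1u^{-1}g_2^{-1}||$ depends only on $g_1,g_2$. For symmetry, $||ug_1u^{-1}g_2^{-1}||=||g_2ug_1^{-1}u^{-1}||=||u^{-1}g_2ug_1^{-1}||$ by invariance under inversion and conjugacy invariance, and replacing $u$ by $u^{-1}$ gives $\min_u||ug_1u^{-1}g_2^{-1}||=\min_u||ug_2u^{-1}g_1^{-1}||$. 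Taking $u=1$ gives $d(g,g)\le||gg^{-1}||=||1||=0$. For the triangle inequality, choose $u,v$ attaining $d(g_1,g_2)$ and $d(g_2,g_3)$ and put $e_1=ug_1u^{-1}g_2^{-1}$, $e_2=vg_2v^{-1}g_3^{-1}$; then $(vu)g_1(vu)^{-1}g_3^{-1}=(ve_1v^{-1})e_2$, whence $d(g_1,g_3)\le||(ve_1v^{-1})e_2||\le||e_1||+||e_2||=d(g_1,g_2)+d(g_2,g_3)$ by subadditivity and conjugacy invariance.
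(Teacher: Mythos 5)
Your forward inequality (rewriting $d(w_1,w_2)=\min_{p,q}||w_{p,q}||$ and identifying $||w_{p,q}||=||ug_1u^{-1}g_2^{-1}||$ with $u=[C][A]^{-1}$) and your closing derivation of the pseudo-metric axioms from the formula are correct, and the overall architecture (formula first, well-definedness as a corollary) would be legitimate if the reverse inequality were proved. But the reverse inequality $d(w_1,w_2)\le||ug_1u^{-1}g_2^{-1}||$ is exactly where your argument stops being a proof: of your induction on $|\rho|$ you only verify the two easy configurations (the innermost letters $s,\mathrm{inv}(s)$ paired with each other, or both unpaired), and the remaining cases are dispatched with a single sentence about absorbing the offending chord into a cyclic rotation of $w_1$ or $w_2$, or shortening $\rho$, ``without increasing the total weight.'' That is the actual content of the theorem, not a routine verification. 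Concretely, if $s$ (the last letter of $\rho$) is paired with an earlier letter of $\rho$ itself, the natural surgery --- delete the chord together with the enclosed, internally folded block --- produces a folding of a word $\rho''\,w_1\,\overline{\rho}\,\overline{w_2}$ in which the two conjugator blocks are no longer mirror images, so your inductive hypothesis does not apply; and the claim genuinely fails without the mirror structure: for $\alpha=\mathrm{inv}(a)$, middle word $a$, $\beta$ empty and $w_2=b$, the word $\mathrm{inv}(a)\,a\,\mathrm{inv}(b)$ admits a folding of weight $\mathrm{wt}(b)$, while $d(a,b)=\mathrm{wt}(a)+\mathrm{wt}(b)$. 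Similar bookkeeping problems arise when $s$ or its mirror is paired into $w_1$ or into $\overline{w_2}$: one must re-cut $w_1$ or $w_2$ and re-insert internally folded blocks, none of which is spelled out. Since you yourself flag this case analysis as the main obstacle, the crux of the theorem is left unproven.

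For comparison, the paper orders the steps the other way and thereby avoids this induction entirely: it first proves, by a short local case analysis on where an inserted cancelling pair $l\,\mathrm{inv}(l)$ sits relative to the cut $p$, that $d(w_1,w_2)$ is unchanged when $w_1$ is replaced by any other word representing $g_1$; once that is known, the hard inequality is immediate, because a folding of $\rho\,w_1\,\overline{\rho}\,\overline{w_2}$ already \emph{is} a mixed folding (with parameters $p=n+k$, $q=0$) of the pair $(w_1\overline{\rho}\rho,\;w_2)$, and $w_1\overline{\rho}\rho$ still represents $g_1$. If you want to keep your outline, the cheapest repair is to prove that word-level invariance first (a genuinely local two-letter perturbation, as in the paper's second step) and then replace your induction on $|\rho|$ by this one-line observation.
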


\begin{proof}

First, we prove for two words $w_1=l_1\cdots l_n$ and $w_2=l'_1\cdots l'_m$, we have $d(w_1,w_2)=d(w_2,w_1)$. Assume that $(\mathcal{F},p,q)$ is a mixed folding of the words $w_1$ and $w_2$, so $\mathcal{F}$ is a folding of the word $w=l_{p+1}\cdots l_n \cdot l_1\cdots l_p\cdot \mathrm{inv}(l'_{q})\cdots \mathrm{inv}(l'_1)\cdot \mathrm{inv}(l'_m)\cdots \mathrm{inv}(l'_{q+1})$, then the tuple $(\mathcal{F'},q,p)$ is a mixed folding of $w_2$ and $w_1$ with the same sum of the weights of the letters at unpaired positions, where the set $\mathcal{F'}=\{\{m+n-i+1, m+n-j+1\}|\{i,j\}\in \mathcal{F}\}$. Therefore $d(w_1,w_2)\ge d(w_2,w_1)$. Similarly $d(w_1,w_2)\le d(w_2,w_1)$. In case $d(g_1,g_2)$ is well-defined for $g_1, g_2\in F_T$, it implies $d(g_1,g_2)=d(g_2,g_1)$ for any $g_1, g_2\in F_T$.

Second, we prove the formula for $d(g_1,g_2)$ does not depend on the choices of $w_1$ (resp., $w_2$) representing $g_1$ (resp., $g_2$). Since the group $F_T$ is free and the distance $d$ is symmetric over words, it suffices to prove that for the words $w_1=l_1\cdots l_n$, $w_2=l'_1\cdots l'_m$ and $w'_1=l_1\cdots l_t\cdot l \cdot \mathrm{inv}(l) \cdot l_{t+1}\cdots l_n$ over the alphabet $S$, we have $d(w_1,w_2)=d(w'_1,w_2)$. Let $$\phi(p,i)=\begin{cases} i&\mbox{, if } 1\le i \le p \\i+2 &\mbox{, if } p+1\le i \le m+n \end{cases}$$ be a auxiliary function. On one hand, let $(\mathcal{F},p,q)$ be a mixed folding of the words $w_1$ and $w_2$, then $\mathcal{F}$ is a folding of the word $w=l_{p+1}\cdots l_n \cdot l_1\cdots l_p\cdot \mathrm{inv}(l'_{q})\cdots \mathrm{inv}(l'_1)\cdot \mathrm{inv}(l'_m)\cdots \mathrm{inv}(l'_{q+1})$. If $0\le t\le p$, then the tuple $(\mathcal{F}_1,p+2,q)$ is a mixed folding of the words $w'_1$ and $w_2$ with the same sum of the weights of the letters at unpaired positions, where the set $\mathcal{F}_1=\{\{\phi(n-p+t,i),\phi(n-p+t,j)\}|\{i,j\}\in \mathcal{F}\}\cup\{\{n-p+t+1,n-p+t+2\}\}$. Similarly, if $p+1\le t\le n$, then the tuple $(\mathcal{F}_2,p,q)$ is a mixed folding of the words $w'_1$ and $w_2$ with the same sum of the weights of the letters at unpaired positions, where the set $\mathcal{F}_2=\{\{\phi(t-p,i),\phi(t-p,j)\}|\{i,j\}\in \mathcal{F}\}\cup\{\{t-p+1,t-p+2\}\}$. Hence we have $d(w_1,w_2)\ge d(w'_1,w_2)$. On the other hand, let $(\mathcal{F'},p,q)$ be a mixed folding of the words $w'_1$ and $w_2$. If $0\le t\le p-2$, then the tuple $(\mathcal{F}_1,p-2,q)$ is a mixed folding of the words $w_1$ and $w_2$ with the same sum of the weights of the letters at unpaired positions, where the set $\mathcal{F}'_1=\{\{i,j\}|\{\phi(n-p+t+2,i),\phi(n-p+t+2,j)\}\in\mathcal{F'}\}$. If $p\le t \le n$, then the tuple $(\mathcal{F}_2,p,q)$ is a mixed folding of the words $w_1$ and $w_2$ with the same sum of the weights of the letters at unpaired positions, where the set $\mathcal{F}'_2=\{\{i,j\}|\{\phi(t-p,i),\phi(t-p,j)\}\in\mathcal{F'}\}$. If $t=p-1$, then the tuple $(\mathcal{F}'_3, p-1,q)$ is a mixed folding of the words $w_1$ and $w_2$ with the same sum of the weights of the letters at unpaired positions, where the set $\mathcal{F}'_3=\{\{i,j\}|\{\phi_0(i),\phi_0(j)\}\in\mathcal{F'}\}$ and $$\phi_0(i)=\begin{cases} i+1 &\mbox{, if }1\le i \le n\\ i+2 &\mbox{, if }n+1\le i\le m+n\end{cases}$$ is an auxiliary function. Hence we have $d(w_1,w_2)\le d(w'_1,w_2)$.

Then, we prove the equality $d(g_1,g_2)=\min_{u\in F_T}||ug_1u^{-1}g_2^{-1}||$ holds for any $g_1,g_2\in F_T$. On one hand, let $(\mathcal{F},p,q)$ be a mixed folding of two words $w_1=l_1\cdots l_n$ and $w_2=l'_1\cdots l'_m$ representing $g_1$ and $g_2$, then $\mathcal{F}$ is a folding of the word $w=l_{p+1}\cdots l_n \cdot l_1\cdots l_p\cdot \mathrm{inv}(l'_{q})\cdots \mathrm{inv}(l'_1)\cdot \mathrm{inv}(l'_m)\cdots \mathrm{inv}(l'_{q+1})$. The word $l_{p+1}\cdots l_n \cdot l_1\cdots l_p$ represents a conjugate of $g_1$ in $F_T$ and the word $\mathrm{inv}(l'_{q})\cdots \mathrm{inv}(l'_1)\cdot \mathrm{inv}(l'_m)\cdots \mathrm{inv}(l'_{q+1})$ represents a conjugate of $g_2^{-1}$ in $F_T$. So there exists $u_1, u_2\in F_T$ such that the sum of the weights of the letters at unpaired positions in the mixed folding $(\mathcal{F},p,q)$ is no less than $||u_1g_1u_1^{-1}u_2 g_2^{-1} u_2^{-1}||=||u_2^{-1} u_1g_1u_1^{-1}u_2 g_2^{-1}||$. Hence there exists $u\in F_T$ such that $d(g_1,g_2)\ge ||ug_1u^{-1}g_2^{-1}||$. On the other hand, let $u$ be an element in $F_T$ represented by a word $l''_1\cdots l''_k$ over the alphabet $S$, and $\mathcal{F}$ be a folding of the word $l''_1\cdots l''_k\cdot l_1\cdots l_n\cdot \mathrm{inv}(l''_k)\cdots \mathrm{inv}(l''_1) \cdot \mathrm{inv}(l'_m)\cdots \mathrm{inv}(l'_1)$. Then the tuple $(\mathcal{F},n+k,0)$ is a mixed folding of the words $l_1\cdots l_n\cdot \mathrm{inv}(l''_k)\cdots \mathrm{inv}(l''_1)\cdot l''_1\cdots l''_k$ and $l'_1 \cdots l'_m$ representing $g_1$ and $g_2$ respectively. Hence for any $u\in F_T$, we have $d(g_1,g_2)\le||ug_1u^{-1}g_2^{-1}||$. Therefore $\min_{u\in F_T}||ug_1u^{-1}g_2^{-1}||$ exists and is equal to $d(g_1,g_2)$. As an immediate corollary, we know $d(g,g)=0$ for any $g\in F_T$.

Last, we prove for three group elements $g_1,g_2,g_3\in F_T$, we have $d(g_1,g_2)+d(g_2,g_3)\ge d(g_1,g_3)$. Let $u,v \in F_T$ be two group elements such that $d(g_1,g_2)=||ug_1 u^{-1} g_2^{-1}||$ and $d(g_2,g_3)=||vg_2 v^{-1} g_3^{-1}||$, then by Theorem \ref{norm-defining-property}, we have \begin{equation*} 
\begin{aligned}
d(g_1,g_2)+d(g_2,g_3) & = ||ug_1 u^{-1} g_2^{-1}||+||vg_2 v^{-1} g_3^{-1}|| \\
 &= ||ug_1 u^{-1} g_2^{-1}||+||g_2 v^{-1} g_3^{-1}v||\\
 &\ge||ug_1 u^{-1} v^{-1} g_3^{-1}v||\\
 &=||vug_1u^{-1}v^{-1} g_3^{-1}||\\
 &\ge d(g_1,g_3).
\end{aligned}
\end{equation*}

\end{proof}
Then we define the weighted cancellation distance between two elements on a free group. We remove the word ``weighted'' if the weight function $\mathrm{wt}$ satisfies $\mathrm{wt}(t)=1$ for each $t\in T$.  
\begin{definition}\label{distance-definition}
Given a set $T$ and a function $\mathrm{wt}: T\rightarrow \mathbb{R}^+\cup\{0\}$. The weighted cancellation distance $d: F_T\times F_T \rightarrow \mathbb{R}^+\cup \{0\}$ is defined as $$d(g_1,g_2)=\min_{u\in F_T}||ug_1u^{-1}g_2^{-1}||$$ where $g_1,g_2\in F_T$, and the group $F_T$ is freely generated by $T$.
\end{definition}

We use the properties of the weighted cancellation norms as a tool to prove some properties of the weighted cancellation distances.

\begin{theorem}\label{distance-property-II} Given a set $T$ and a function $\mathrm{wt}: T\rightarrow \mathbb{R}^+\cup\{0\}$. Let $F_T$ be the group freely generated by $T$. Then the following properties holds for the weighted cancellation distance $d$ where $g_1,g_2,g_3\in F_T$ and $n$ is a positive integer:
\begin{enumerate}[(a)]
\item $d(g_1,1)=||g_1||$;
\item $d(g_1,g_2)=d(g_1^{-1},g_2^{-1})$;
\item $d(g_1g_2,g_3)=d(g_2g_1,g_3)$;
\item $d(g_1^n,g_2^n)\le n\cdot  d(g_1,g_2)$.
\end{enumerate}
\end{theorem}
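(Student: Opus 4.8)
The idea is to deduce all four items from properties of the weighted cancellation norm already in hand from Theorem \ref{norm-defining-property} (conjugacy invariance, subadditivity under concatenation, the cyclic identity $||w_1w_2||=||w_2w_1||$, and the inversion symmetry $||w||=||\mathrm{inv}(l_n)\cdots\mathrm{inv}(l_1)||$), together with the definition $d(g_1,g_2)=\min_{u\in F_T}||ug_1u^{-1}g_2^{-1}||$ and the fact, established in Theorem \ref{distance-defining-property}, that this minimum is genuinely attained; the latter is what makes the ``re-indexing the minimization variable'' arguments below legitimate.

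\textbf{Items (a), (b), (c).} For (a), $d(g_1,1)=\min_u||ug_1u^{-1}||$ and each summand equals $||g_1||$ by conjugacy invariance of the norm, so the minimum is $||g_1||$. For (b), for every $u$ the element $(ug_1u^{-1}g_2^{-1})^{-1}=g_2ug_1^{-1}u^{-1}$ has the same norm, and by the cyclic identity $||g_2ug_1^{-1}u^{-1}||=||ug_1^{-1}u^{-1}g_2||$; since $(g_2^{-1})^{-1}=g_2$, taking the minimum over $u$ gives $d(g_1,g_2)=d(g_1^{-1},g_2^{-1})$. For (c), the relation $g_2g_1=g_1^{-1}(g_1g_2)g_1$ yields $ug_1g_2u^{-1}=(ug_1)(g_2g_1)(ug_1)^{-1}$, so $||ug_1g_2u^{-1}g_3^{-1}||=||(ug_1)(g_2g_1)(ug_1)^{-1}g_3^{-1}||$; as $u$ ranges over $F_T$ so does $ug_1$, hence minimizing both sides gives $d(g_1g_2,g_3)=d(g_2g_1,g_3)$. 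I will record at this point the consequence, needed in (d), that $d$ is invariant under conjugation in its first argument: applying (c) with the factorization $vg_1v^{-1}=v\cdot(g_1v^{-1})$ gives $d(vg_1v^{-1},g_3)=d((g_1v^{-1})\cdot v,g_3)=d(g_1,g_3)$.

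\textbf{Item (d).} Choose $u$ attaining $d(g_1,g_2)=||ug_1u^{-1}g_2^{-1}||$ and set $h=ug_1u^{-1}$, so that $h^n=ug_1^nu^{-1}$ is conjugate to $g_1^n$. By the conjugation-invariance noted above, $d(g_1^n,g_2^n)=d(h^n,g_2^n)\le||h^ng_2^{-n}||$, the inequality coming from the trivial conjugator in the definition of $d$. It remains to bound $||h^ng_2^{-n}||$ by $n\,||hg_2^{-1}||=n\,d(g_1,g_2)$, and for this I will verify, by a straightforward induction on $n$, the telescoping identity
$$h^ng_2^{-n}=\prod_{k=0}^{n-1}g_2^{k}(hg_2^{-1})g_2^{-k},$$
which displays $h^ng_2^{-n}$ as a product of $n$ conjugates of $hg_2^{-1}$. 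Subadditivity and conjugacy invariance of the norm then give $||h^ng_2^{-n}||\le\sum_{k=0}^{n-1}||g_2^{k}(hg_2^{-1})g_2^{-k}||=n\,||hg_2^{-1}||$, which finishes the proof. The only step carrying genuine content is this telescoping identity; everything else is formal, and the main point to watch is simply keeping track of the side on which each conjugator acts, together with appealing to Theorem \ref{distance-defining-property} whenever an argument replaces one minimization variable by another.
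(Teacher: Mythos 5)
Your proposal is correct and follows essentially the same route as the paper: items (a)--(c) are proved by the same re-indexing of the conjugator using conjugacy invariance and the cyclic identity for the norm, and your telescoping identity $h^ng_2^{-n}=\prod_{k=0}^{n-1}g_2^{k}(hg_2^{-1})g_2^{-k}$ together with subadditivity and conjugacy invariance is just a multiplicative repackaging of the paper's induction $||ug_1^nu^{-1}g_2^{-n}||\le n||ug_1u^{-1}g_2^{-1}||$. The attainment of the minimum that you invoke is indeed supplied by Theorem \ref{distance-defining-property}, exactly as in the paper, so no gap remains.
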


\begin{proof}
\begin{enumerate}[(a)]
\item This is immediate from the definition of the weighted cancellation distance and the conjugacy invariance of the weighted cancellation norm.
\item By definition 
\begin{equation*} 
\begin{aligned}
d(g_1^{-1},g_2^{-1})&=\min_{u\in F_T}||ug_1^{-1}u^{-1}g_2|| \\
&=\min_{u\in F_T}||u^{-1}g_2 ug_1^{-1}||\\
&=d(g_2,g_1)\\
&=d(g_1,g_2).
\end{aligned}
\end{equation*}
\item By definition \begin{equation*} 
\begin{aligned} 
d(g_1g_2,g_3)&=\min_{u\in F_T}||ug_1g_2u^{-1}g_3^{-1}|| \\
&=\min_{u\in F_T}||(ug_1)g_2g_1(ug_1)^{-1}g_3^{-1}||\\
&=d(g_2,g_1,g_3).
\end{aligned}
\end{equation*}
\item By definition there exists $u\in F_T$ such that $d(g_1,g_2)=||ug_1 u^{-1}g_2^{-1}||$. We use induction on $n$ to prove $||ug_1^n u^{-1}g_2^{-n}||\le n||u g_1 u^{-1} g_2^{-1}||$. The base case is trivial. Now assume $n\ge 2$ and $||ug_1^{n-1} u^{-1}g_2^{-n+1}||\le (n-1)||u g_1 u^{-1} g_2^{-1}||$. Then by Theorem \ref{distance-defining-property} we have
\begin{equation*} 
\begin{aligned} 
||ug_1^n u^{-1}g_2^{-n}||&=||g_2^{-1}ug_1^n u^{-1}g_2^{-n+1}||\\
&=||(g_2^{-1}ug_1 u^{-1})( u g_1^{n-1} u^{-1}g_2^{-n+1}) ||\\
&\le ||g_2^{-1}ug_1 u^{-1}||+|| u g_1^{n-1} u^{-1}g_2^{-n+1}||\\
&\le n||u g_1 u^{-1} g_2^{-1}||.
\end{aligned}
\end{equation*}
Because $d(g_1,g_2)=||ug_1 u^{-1}g_2^{-1}||$ and $d(g_1^n,g_2^n)\le||ug_1^n u^{-1}g_2^{-n}||$, we have $d(g_1^n,g_2^n)\le n\cdot  d(g_1,g_2)$.
\end{enumerate} 
\end{proof}

Lastly, we prove an analog of the Theorem \ref{norm-equi-form}.

\begin{theorem}\label{distance-equi-form} Given a set $T$ and a function $\mathrm{wt}:T\rightarrow \mathbb{R}^+\cup \{0\}$. Let $F_T$ be the group freely generated by $T$ and $d$ be the weighted cancellation distance on $F_T$. Then for $g_1, g_2\in F_T$, $d(g_1,g_2)$ is the smallest real number $x$ such that there exist a nonnegative integer $m$, elements $h_0, h_1,\cdots, h_m,u_0,u_1,\cdots,u_{m-1}\in F_T$ and $t_1,\cdots t_m\in T\subseteq F_T$ such that $h_0=g_1$, $h_m=g_2$, $u_j h_ju_j^{-1} h_{j+1}^{-1}$ is a conjugate of $t_{j+1}$ or $t_{j+1}^{-1}$ for each $j=0,\cdots, m-1$ and $\sum_{j=1}^{m} \mathrm{wt}(t_j)=x$.
\end{theorem}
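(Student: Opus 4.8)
The plan is to prove this as the distance analog of Theorem \ref{norm-equi-form}, mimicking its two-direction structure but using the characterization $d(g_1,g_2)=\min_{u\in F_T}||ug_1u^{-1}g_2^{-1}||$ from Theorem \ref{distance-defining-property} to reduce to the norm case. For the inequality $d(g_1,g_2)\le x$ whenever a decomposition as in the statement exists: given $h_0=g_1,\dots,h_m=g_2$ and $u_0,\dots,u_{m-1}$ with each $u_jh_ju_j^{-1}h_{j+1}^{-1}$ a conjugate of $t_{j+1}^{\pm1}$, I would telescope. By the triangle inequality for $d$ and property (a) of Theorem \ref{distance-property-II}, $d(g_1,g_2)=d(h_0,h_m)\le\sum_{j=0}^{m-1}d(h_j,h_{j+1})$, and $d(h_j,h_{j+1})\le||u_jh_ju_j^{-1}h_{j+1}^{-1}||$ by definition of $d$; since $u_jh_ju_j^{-1}h_{j+1}^{-1}$ is a conjugate of $t_{j+1}^{\pm1}$, its weighted cancellation norm is at most $\mathrm{wt}(t_{j+1})$ (the single-letter word has a folding with one unpaired position, and conjugacy invariance of $||\cdot||$ from Theorem \ref{norm-defining-property} finishes it). Summing gives $d(g_1,g_2)\le\sum_{j=1}^m\mathrm{wt}(t_j)=x$.

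For the reverse direction, I want to exhibit, for any $x=d(g_1,g_2)$, a decomposition of the stated form with $\sum\mathrm{wt}(t_j)\le x$. By Theorem \ref{distance-defining-property} there is $u\in F_T$ with $d(g_1,g_2)=||ug_1u^{-1}g_2^{-1}||$. Now apply the argument in the proof of Theorem \ref{norm-equi-form} to the element $ug_1u^{-1}g_2^{-1}$: there is a folding of a representing word realizing the norm, so $ug_1u^{-1}g_2^{-1}=k_0s_1k_1s_2\cdots s_mk_m$ with $k_0k_1\cdots k_m=1$, each $s_j=t_j^{\pm1}$ for some $t_j\in T$, and $\sum_j\mathrm{wt}(t_j)=||ug_1u^{-1}g_2^{-1}||=x$. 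Setting $v_j=(k_0\cdots k_{j-1})s_j(k_0\cdots k_{j-1})^{-1}$ as in that proof gives $v_1\cdots v_m=ug_1u^{-1}g_2^{-1}$ with each $v_j$ a conjugate of $t_j^{\pm1}$. The remaining task is to repackage the product $v_1\cdots v_m=ug_1u^{-1}g_2^{-1}$ into the chain form. Define $h_0=g_1$ and $h_m=g_2$, and for $0<j<m$ set $h_j=u^{-1}(v_{j+1}v_{j+2}\cdots v_m)^{-1}\cdot\, g_2$ — equivalently $h_j$ chosen so that $u h_j u^{-1}=(v_{j+1}\cdots v_m)g_2^{-1}$ rearranged appropriately; I will pick the precise formula so that $u_j:=u$ for every $j$ makes $u_jh_ju_j^{-1}h_{j+1}^{-1}$ equal to a conjugate of $v_{j+1}$ (hence of $t_{j+1}^{\pm1}$). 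Concretely, from $ug_1u^{-1}=v_1\cdots v_m g_2^{-1}\cdot g_2 \cdot$ (trivial) one checks that with $h_j:=u^{-1}v_{j+1}\cdots v_m\, u\, g_2$ type formula, consecutive ratios collapse to single $v$'s; verifying $h_0=g_1$ and $h_m=g_2$ is then a direct computation.

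The main obstacle I anticipate is purely bookkeeping: getting the indexing of the $h_j$ and the choice of conjugating elements $u_j$ exactly right so that each $u_jh_ju_j^{-1}h_{j+1}^{-1}$ comes out to be an honest conjugate of $t_{j+1}^{\pm1}$ — there is freedom in whether to absorb $u$ into each $u_j$ or to let the $h_j$ carry it, and the two ends $h_0=g_1$, $h_m=g_2$ must match on the nose, so the formula for $h_j$ must interpolate between $g_1$ (pre-conjugation by $u$) and $g_2$ correctly. Once the right formulas are written down, all verifications are one-line group-element manipulations, and the norm-to-single-letter estimate $||\text{conjugate of }t^{\pm1}||\le\mathrm{wt}(t)$ together with the triangle inequality for $d$ (both already available) do the rest. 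I do not expect to need anything beyond Theorems \ref{norm-defining-property}, \ref{norm-equi-form}, \ref{distance-defining-property} and \ref{distance-property-II}.
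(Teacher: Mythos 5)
Your first inequality is fine and is exactly the paper's argument: conjugacy invariance of $||\cdot||$ gives $||u_jh_ju_j^{-1}h_{j+1}^{-1}||=\mathrm{wt}(t_{j+1})$, the definition of $d$ gives $d(h_j,h_{j+1})\le||u_jh_ju_j^{-1}h_{j+1}^{-1}||$, and the triangle inequality from Theorem \ref{distance-defining-property} (property (a) of Theorem \ref{distance-property-II} is not what you need here) telescopes to $d(g_1,g_2)\le x$. The gap is in the converse, precisely at the ``bookkeeping'' you postpone. After getting $u$ with $d(g_1,g_2)=||ug_1u^{-1}g_2^{-1}||$ and the decomposition $ug_1u^{-1}g_2^{-1}=v_1\cdots v_m$ from Theorem \ref{norm-equi-form}, you insist on $u_j=u$ for every $j$. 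Both formulas you float fail an endpoint check: $h_j=u^{-1}(v_{j+1}\cdots v_m)^{-1}g_2$ gives $h_0\neq g_1$ and $h_m=u^{-1}g_2\neq g_2$, while $h_j=u^{-1}(v_{j+1}\cdots v_m)g_2u$ gives $h_0=g_1$ but $h_m=u^{-1}g_2u$, and its intermediate ratios $uh_ju^{-1}h_{j+1}^{-1}$ are not conjugates of $v_{j+1}$. Moreover the constant-conjugator ansatz cannot be repaired in general: iterating $h_{j+1}=c_{j+1}^{-1}uh_ju^{-1}$ with $h_0=g_1$, $h_m=g_2$ forces $(u^{m-1}c_1u^{-(m-1)})\cdots(uc_{m-1}u^{-1})c_m=u^mg_1u^{-m}g_2^{-1}$, i.e.\ a decomposition of $u^mg_1u^{-m}g_2^{-1}$ into conjugates of the $t_j^{\pm1}$; by Theorem \ref{norm-equi-form} its total weight is at least $||u^mg_1u^{-m}g_2^{-1}||$, and since $u^m$ need not be a minimizing conjugator this can strictly exceed $d(g_1,g_2)$. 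So fixing $u_j=u$ is an obstruction, not mere indexing.

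The missing idea is to conjugate only once. Take $u_0=u$ and $u_j=1$ for $j\ge1$, with $h_0=g_1$ and $h_j=(v_1\cdots v_j)^{-1}ug_1u^{-1}$ for $1\le j\le m$ (this is the paper's choice). Then $h_m=(ug_1u^{-1}g_2^{-1})^{-1}ug_1u^{-1}=g_2$ automatically, $u_0h_0u_0^{-1}h_1^{-1}=ug_1u^{-1}\cdot ug_1^{-1}u^{-1}v_1=v_1$, and $h_jh_{j+1}^{-1}=v_{j+1}$ for $j\ge1$, so every ratio is a conjugate of $t_{j+1}$ or $t_{j+1}^{-1}$ and the total weight is exactly $d(g_1,g_2)$. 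With this substitution your outline closes and coincides with the paper's proof.
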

\begin{proof}On one hand, if we have such a real number $x$, then by Theorem \ref{distance-defining-property}, we have $x=\sum_{j=0}^{m-1} \mathrm{wt}(t_j)=\sum_{j=0}^{m-1}||u_j h_j u_j^{-1} h_{j+1}^{-1}||\ge \sum_{j=0}^{m-1}d(h_j, h_{j+1})\ge d(h_0,h_m)=d(g_1,g_2)$.

On the other hand, by Theorem \ref{norm-equi-form}, for $g_1,g_2\in F_T$, there exists $u\in F_T$ such that $d(g_1,g_2)=||u g_1 u^{-1} g_2^{-1}||$. So there exist a nonnegative integer $m$, elements $v_1,\cdots,v_m\in F_T$ and $t_1,\cdots, t_m\in T\subseteq F_T$ such that $u g_1 u^{-1} g_2^{-1}=v_1\cdots v_m$ where $v_j$ is a conjugate of $t_j$ or $t_j^{-1}$ for each $j=1,\cdots, m$ and $\sum_{j=1}^{m} \mathrm{wt}(t_j)=d(g_1,g_2)$. Let $h_0=g_1$, $h_j=(v_1\cdots v_j)^{-1} ug_1 u^{-1}$ for each $j=1,\cdots, m$. Let $u_0=u$, $u_j=1$ for each $j=1,\cdots,m-1$. Then we have $u_0h_0 u_0^{-1} h_1^{-1}=ug_1u^{-1} (u g_1^{-1} u^{-1} v_1)=v_1$ and $u_j h_ju_j^{-1} h_{j+1}^{-1}= h_jh_{j+1}^{-1}=(v_1\cdots v_j)^{-1}(ug_1 u^{-1})( ug_1^{-1} u^{-1})(v_1\cdots v_{j+1})=v_{j+1}$ for $j=1,\cdots,m-1$.
\end{proof}

\section{Minimum Area of Homotopy}
In this section, we explain a close relation between the minimum area of a homotopy between two closed plane curves defined later, and the weight cancellation distance defined in Section 3.

Let $\gamma_1$ and $\gamma_2$ be two piecewise smooth closed curves on the plane, that is to say, injective piecewise smooth maps from $S^1=[0,1]/(0\sim 1)$ to $\mathbb{R}^2$. A homotopy between them is a piecewise smooth map $H:S^1\times[0,1]\rightarrow \mathbb{R}^2$ such that $H(s_1,0)=\gamma_1(s_1)$ and $H(s_2,1)=\gamma_2(s_2)$ for every $s_1,s_2\in S^1$. Similarly, let $\gamma_1$ and $\gamma_2$ be two piecewise smooth curves with the same endpoints, that is to say, injective piecewise smooth maps from $S^1=[0,1]$ to $\mathbb{R}^2$ with $\gamma_1(0)=\gamma_2(0)$ and $\gamma_1(1)=\gamma_2(1)$. Then a homotopy between them is a piecewise smooth map $H:[0,1]\times[0,1]\rightarrow \mathbb{R}^2$ such that $H(s_1,0)=\gamma_1(s_1)$, $H(s_2,1)=\gamma_2(s_2)$, $H(0,t_1)=\gamma_1(0)$ and $H(1,t_2)=\gamma_1(1)$ for every $s_1,s_2,t_1,t_2\in [0,1]$. In both cases, the area of the homotopy is defined as $$\mathrm{Area}(H):=\int\int|\det (DH(s,t))|ds dt.$$ 
If the homotopy $H$ is Lipschitz, then by the area formula in geometric measure theory, the above integral is finite and equal to $$\mathrm{Area}(H)=\int_{\mathbb{R}^2} |H^{-1}(\{x\})|dx.$$ 
By this formula for the area of homotopy, a homotopy between two piecewise smooth curves $\gamma_1$ and $\gamma_2$ with the same endpoints corresponds to a homotopy between the curve $\gamma_3$ and a constant curve with the same area, where $$\gamma_3(t)=\begin{cases}\gamma_1(2t)&\mbox{, if }0\le t\le \frac{1}{2}\\\gamma_2(2-2t)&\mbox{, if }\frac{1}{2}\le t \le 1\end{cases}$$ 
is a piecewise smooth curve on the plane.

Assume that two piecewise smooth closed curves $\gamma_1$ and $\gamma_2$ divide the plane into finite number of regions $R_0, R_1, \cdots R_n $ where $R_0$ is the unique infinite region. Let $F$ denote the fundamental group of the space $\mathbb{R}^2$ with every $R_i$ ($1\le i\le n $) removed. Let $A_i$ be the area of $R_i$ for each $1\le i\le n$. Then $F$ is freely generated by the elements $x_i$ ($1\le i \le n$), where each $x_i$ represents a generator of $\pi_1(\mathbb{R}^2-R_i)\cong \mathbb{Z}$. Let the weight function $\mathrm{wt}:\{x_1,\cdots,x_n\}\rightarrow \mathbb{R}^{+}$ be $\mathrm{wt}(x_i)=A_i$ for each $1\le i\le n$, then we have a weighted cancellation norm and a weighted cancellation distance on the group $F$. Let $g_1$ (resp., $g_2$) be the element in $F$ representing $\gamma_1$ (resp., $\gamma_2$). 

Under the above assumptions, we can prove the following theorem:

\begin{theorem}\label{main-relation}
The minimum area of a (piecewise smooth) homotopy between the closed curves $\gamma_1$ and $\gamma_2$ exists, and is equal to the weighted cancellation distance $d(g_1,g_2)$.
\end{theorem}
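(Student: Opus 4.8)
Write $\mathcal{A}$ for the infimum of $\mathrm{Area}(H)$ over all piecewise smooth homotopies $H$ from $\gamma_1$ to $\gamma_2$, and set $A_{x_i}:=A_i$. The plan is to prove $\mathcal{A}\le d(g_1,g_2)$ by an explicit construction that in fact attains the value $d(g_1,g_2)$, and $\mathcal{A}\ge d(g_1,g_2)$ by cutting an arbitrary homotopy into elementary pieces; together these give both the equality and the existence of a minimizer.

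For $\mathcal{A}\le d(g_1,g_2)$, Theorem \ref{distance-equi-form} furnishes elements $g_1=h_0,\dots,h_m=g_2$ of $F$, elements $u_0,\dots,u_{m-1}\in F$ and $t_1,\dots,t_m\in\{x_1,\dots,x_n\}$ with $u_jh_ju_j^{-1}h_{j+1}^{-1}$ a conjugate of $t_{j+1}^{\pm1}$ and $\sum_{j=1}^m A_{t_j}=d(g_1,g_2)$. I would realize this chain by a concatenation of homotopies; since the area of a concatenation is the sum of the areas of its pieces, it is enough to build, for each $j$, a homotopy of area $A_{t_{j+1}}$ from a chosen curve $c_j$ representing $h_j$ to a chosen curve $c_{j+1}$ representing $h_{j+1}$, together with homotopies $\gamma_1\to c_0$ and $c_m\to\gamma_2$ of area zero (so one takes $c_0=\gamma_1$ and $c_m=\gamma_2$). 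Each homotopy $c_j\to c_{j+1}$ is assembled from a \emph{conjugation move}, which grows a doubled one-dimensional finger of the curve along a path representing $u_j$ and then retracts it, and so has zero area, followed by a \emph{region move}, in which a doubled finger reaches out along a one-dimensional arc toward the region $R_i$ with $x_i=t_{j+1}$ and its tip then wraps once around $R_i$, hugging $\partial R_i$, so the sweep is confined to $\overline{R_i}$ and the move has area exactly $A_i$. Concatenating over $j$ yields a homotopy from $\gamma_1$ to $\gamma_2$ of area $\sum_{j=1}^m A_{t_j}=d(g_1,g_2)$.

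For $\mathcal{A}\ge d(g_1,g_2)$, let $H$ be any piecewise smooth homotopy of finite area and fix $\varepsilon>0$. I would first perturb $H$, increasing its area by less than $\varepsilon$, to a homotopy admitting a subdivision $0=s_0<\dots<s_M=1$ for which each restriction $H|_{S^1\times[s_{k-1},s_k]}$ is an \emph{elementary move} of one of two kinds. A \emph{type-A move} keeps the slice inside $\mathbb{R}^2\setminus\bigcup_iR_i$ throughout, or pokes a finger into a single region $R_i$ and brings it back to its initial position; either way it leaves the free homotopy class of the slice in $\mathbb{R}^2\setminus\bigcup_iR_i$ unchanged, and its area is nonnegative. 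A \emph{type-B move} pushes an arc of the slice cleanly across one region $R_i$, changing the winding number of the slice about $R_i$ by $\pm1$; since the winding number of $H(\cdot,s)$ about every $q\in R_i$ must then change as $s$ traverses the subinterval, $|H^{-1}(q)|\ge1$ for almost every $q\in R_i$, so the move has area at least $A_i$, and it multiplies the class of the slice on one side by a conjugate of $x_i^{\pm1}$. Because the subintervals are disjoint, $\mathrm{Area}(H)+\varepsilon$ is at least the sum of the areas of all the moves, hence at least $\sum_{\text{type-}B}A_i$. On the other hand, following the slice along the basepoint path $H(0,\cdot)$ and conjugating it back to a fixed basepoint produces a sequence $g_1=\tilde h_0,\tilde h_1,\dots,\tilde h_{M'}=g_2'$ with $g_2'$ conjugate to $g_2$, in which each step is a conjugation (from the type-A moves and the drift of the basepoint) followed by multiplication by a conjugate of a generator (from a type-B move); this is precisely a chain as in Theorem \ref{distance-equi-form}, of cost $\sum_{\text{type-}B}A_i$, so by conjugation-invariance of $d$ we get $d(g_1,g_2)=d(g_1,g_2')\le\sum_{\text{type-}B}A_i\le\mathrm{Area}(H)+\varepsilon$. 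Letting $\varepsilon\to0$ yields $\mathrm{Area}(H)\ge d(g_1,g_2)$.

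I expect the main obstacle to be the reduction to elementary moves in the lower bound: one must show that an arbitrary finite-area piecewise smooth homotopy can, up to an arbitrarily small increase in area, be written as a concatenation of type-A and type-B moves; one must justify the area bound $A_i$ for a type-B move; and one must track exactly which region each type-B move crosses and with which sign, so that the resulting sequence really is admissible for Theorem \ref{distance-equi-form}. On the constructive side, arranging the region move to have area exactly $A_i$ rather than $A_i+\varepsilon$ — which is what upgrades the infimum to an attained minimum — is the other delicate point, and is where the tameness of the $R_i$ as regions of a configuration of piecewise smooth curves enters.
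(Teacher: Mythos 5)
Your upper-bound half ($\mathrm{Area}\le d(g_1,g_2)$, with the value attained) is essentially the paper's own argument: invoke Theorem \ref{distance-equi-form} and realize the resulting chain by zero-area conjugation fingers together with sweeps of area exactly $A_i$ confined to $\overline{R_i}$; no issue there.

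The lower bound, however, has a genuine gap. Everything rests on your claim that an arbitrary finite-area piecewise smooth homotopy can be perturbed, at area cost less than $\varepsilon$, into a concatenation of type-A and type-B moves. That normalization is not a routine genericity statement: for a typical homotopy the intermediate slices pass through the open regions $R_i$ in complicated ways (already the straight-line contraction of a circle does), so a fine time subdivision produces pieces that are neither ``slice stays in $\mathbb{R}^2\setminus\bigcup_i R_i$, or a single retractable finger'' nor ``clean crossing of one region''; to obtain your dichotomy you would have to homotope $H$ rel its two ends into a normal form while controlling the area change, and that normalization is essentially the whole difficulty of the theorem --- you flag it as the main obstacle but supply no mechanism for it. The paper's proof shows the step is unnecessary: choose a single regular value $p_i\in R_i$ of $H$ for each $i$ (Sard's theorem), so that $H$ meets the finite set $\{p_1,\dots,p_n\}$ transversally at finitely many parameter values; between these times the slices stay in $\mathbb{R}^2\setminus\{p_1,\dots,p_n\}$, whose fundamental group is identified with $F$, and their classes change only by conjugation, while each crossing of $p_i$ multiplies the class by a conjugate of $x_i^{\pm 1}$. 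This yields a chain as in Theorem \ref{distance-equi-form} of cost $\sum_{i=1}^n A_i\,|H^{-1}(\{p_i\})|\ge d(g_1,g_2)$, and the area formula gives $\mathrm{Area}(H)=\int_{\mathbb{R}^2}|H^{-1}(\{x\})|\,dx\ge\sum_{i=1}^n A_i\inf_{p_i}|H^{-1}(\{p_i\})|$, so $\mathrm{Area}(H)\ge d(g_1,g_2)$ with no surgery on $H$ at all. Replacing your region-crossing decomposition by this point-crossing (regular value) argument is what closes the gap.
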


\begin{proof}
On one hand, let $H$ be a piecewise smooth homotopy between $\gamma_1$ and $\gamma_2$ on the plane. For each $1\le i\le n$, let $p_i\in R_i$ be a regular value of the homotopy $H$. Because there exist an nonnegative integer $m$ and group elements $h_j\in F$ ($0\le j\le m$) with $h_0=g_1$, $h_m=g_2$ and $d(h_j,h_{j+1})=A_i$ for $|H^{-1}(\{p_i\}) |$ times for each $i=1,\cdots, n$ where $j$ ranges over $0,\cdots,m-1$, we have $\sum_{i=1}^n A_i|H^{-1}(\{p_i\})| \ge d(g_1,g_2)$. By Sard's theorem, we have 
\begin{equation*}
\begin{aligned}
\mathrm{Area}(H)&=\int_{\mathbb{R}^2} |H^{-1}(\{x\})|dx \\
&\ge \sum_{i=1}^n\int_{R_i} |H^{-1}(\{x\}) |dx\\
&\ge \sum_{i=1}^n A_i \inf_{p_i} |H^{-1}(\{p_i\}) |\\
&\ge d(g_1,g_2)
\end{aligned}
\end{equation*}
where $p_i$ ranges over all regular values in $R_i$ of the homotopy $H$.

On the other hand, by Theorem \ref{distance-equi-form}, there exist an nonnegative integer $m$, group elements $h_j\in F$ ($0\le j\le m$) and $u_j\in F$ ($0\le j\le m-1$) such that $h_0=g_1$, $h_m= g_2$ and $u_jh_j u_j^{-1} h_{j+1}^{-1}$ ($0\le j\le m-1$) is a conjugate of $x_i$ or $x_i^{-1}$ where the integer $1\le i=i(j)\le n$ depends on $j$ and $\sum_{j=0}^{m-1} A_{i(j)}=d(g_1,g_2)$. We know the closed curve on $\mathbb{R}^2-R_i$ represented by $x_i$ or $x_i^{-1}$ is null homotopic on $\mathbb{R}^2$ by a homotopy with area $A_i$, so the closed curve on $\mathbb{R}^2-\cup_{i=0}^n R_i$ represented by $u_jh_j u_j^{-1} h_{j+1}^{-1}$ is null homotopic on $\mathbb{R}^2$ by a homotopy with area $A_i$. By changing the coordinates, the closed curves on $\mathbb{R}^2-\cup_{i=0}^n R_i$ represented by $h_j$ and $h_{j+1}$ are homotopic on $\mathbb{R}^2$ by a homotopy with area $A_i$. Therefore, there exists a homotopy with area $d(g_1,g_2)$ between the closed curves $\gamma_1$ and $\gamma_2$.
\end{proof}

The weighted cancellation distances can be computed in polynomial time and space by Theorem \ref{distance-complexity}, so the minimum area of a piecewise smooth homotopy between two piecewise smooth closed plane curves can be computed in polynomial time and space, given that they divide the plane into finite number of regions, in the sense as described in \cite{sim}.

\section{Main Construction}
In this section, we construct a closed plane curve $\gamma$ such that the closed curve $2\cdot \gamma$ formed by traversing $\gamma$ twice is much easier to contract to a point than $\gamma$, in the sense of area of the homotopy. By the geometric interpretation of the weighted cancellation norm, it suffices to construct an element $\overline{w}$ such that the ratio of $||\overline{w}^2||$ to $||\overline{w}||$ is small. 

As a weak example, by taking $\overline{w}=abc(cba)^{-1}$, we get $||\overline{w}^2||=6$ and $||\overline{w}||=4$. Then the ratio of $||\overline{w}^2||$ to $||\overline{w}||$ is $\frac{3}2$, which is strictly less than two. The corresponding curve $\gamma$ is illustrated in the figure below.

\begin{figure}[h]
\centering
\includegraphics[width=0.3\textwidth]{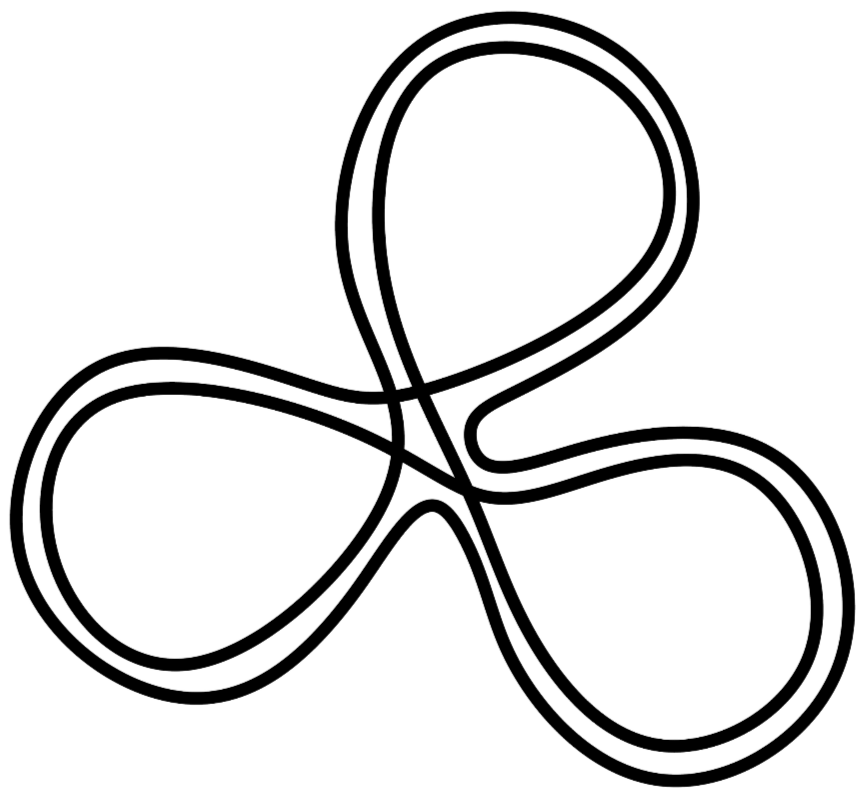}
\caption{\label{fig:curve}A curve $\gamma$ with the ratio of the minimum homotopy area of a null homotopy of $2\cdot \gamma$ to that of $\gamma$ strictly less than two.}
\end{figure}

Let $m\ge 2$ be an integer. Consider the free group $F$ with the standard generator set $T=\{x_i,y_i|i\in \mathbb{Z}^+\cup \{0\}\}$. Let the standard symmetric generating set $S$ be $T\cup T^{-1}$. Let $||\cdot||$ be the cancellation norm on $F$. For every integer $k$, let $F_k$ be the subgroup of $F$ generated by $x_i$, $y_i$ ($0\le i\le k$). For each nonnegative integer $i$, define the twisted element $T_i$ to be $(x_iy_i)^m(y_ix_i)^{-m}\in F$. 

Define the group homomorphisms $f_j: F\rightarrow F$ by $$f_j(a_i)=\begin{cases}T_j^{-1}a_i&\mbox{, if both $i$ and $j$ are odd }\\a_iT_j &\mbox{, otherwise}\end{cases}$$ for each nonnegative integer $j$ and $a_i\in \{x_i,y_i\}$. 

For each integer $j$ with $2\le j\le 2m$, let $f^{(j)}$ denote the group homomorphism $f_{2m}\circ\cdots\circ f_{j+1}$. 

Our construction for $\overline{w}\in F$ is given by the formula $\overline{w}=f^{(2)}(x_0)$. For each element $w\in F$ represented by a word of length $l$ and a nonnegative integer $j$, there exists a word of length $(4m+1)l$ representing $f_j(w)$, so $\overline{w}$ is represented by a word of length $(4m+1)^{2m-2}$. We prove later in this section that $||\overline{w}||=\Theta(m^2)$ and $||\overline{w}^2||=\Theta(m)$ as $m$ tends to infinity. Therefore, by taking a sufficiently large integer $m$, the element $f^{(2)}(x_0)$ serves as an example of an element $\overline{w}\in F$ with a small value of $\frac{||\overline{w}^2||}{||\overline{w}||}$.

More specifically, we establish the following bounds in subsequent subsections.
\begin{theorem}\label{main} For each integer $m\ge 2$, the group element $\overline{w}= f^{(2)}(x_0)$ meets the following conditions:
\begin{enumerate}[(a)]
\item $\overline{w}$ is represented by a word of length $(4m+1)^{2m-2}$;
\item $m^2-3m+5 \le ||\overline{w}||\le 4m^2-8m+5 $;
\item $2 m - 2\le ||\overline{w}^2||\le 8m-4$.
\end{enumerate}
\end{theorem}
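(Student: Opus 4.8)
The plan is to treat part (a) as a direct length count and to prove the two inequalities in each of (b) and (c) separately, the machinery being a telescoping identity for the twisted elements, a combinatorial analysis of foldings for the upper bounds, and the free nilpotent quotients of $F$ for the lower bounds. For (a), I would induct on the number of homomorphisms applied: since $|T_j|=4m$, the endomorphism $f_j$ replaces each letter $a_i^{\pm1}$ of a word by a word of the form $a_i^{\pm1}T_j^{\pm1}$ or $T_j^{\mp1}a_i^{\pm1}$ of length $4m+1$, so a word of length $\ell$ representing $w$ yields one of length $(4m+1)\ell$ representing $f_j(w)$; applying this to $x_0$ through the $2m-2$ maps $f_3,\dots,f_{2m}$ produces a representative of $\overline w$ of length $(4m+1)^{2m-2}$.

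For the upper bounds the basic tool is the identity
\[
(ab)^m(ba)^{-m}=a\cdot\bigl((ba)^{m-1}(ba^{-1}b^{-1})(ba)^{-(m-1)}\bigr),
\]
valid in any group, which exhibits $(ab)^m(ba)^{-m}$ as a product of a conjugate of $a$ and a conjugate of $a^{-1}$; in particular $\|T_j\|=2$, and $f(T_j)$ is a product of two conjugates of $f(x_j)^{\pm1}$ for any homomorphism $f$. To reach the sharp bound $\|\overline w\|\le 4m^2-8m+5$ I would peel off the outer maps to write $\overline w=x_0\cdot f^{(2m)}(T_{2m})\cdot f^{(2m-1)}(T_{2m-1})\cdots f^{(3)}(T_3)$ and then exhibit a concrete folding of the corresponding word: its reduced form is a nested arrangement of blocks of shape $(pq)^m(qp)^{-m}$, and pairing matching blocks level by level in a non-crossing way leaves precisely the letters that cannot be paired at each of the $2m-2$ levels; counting these residual letters yields $4m^2-8m+5$ unpaired positions, which by Theorem \ref{norm-equi-form} bounds $\|\overline w\|$. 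For $\overline w^2$ the essential extra ingredient is a Klein bottle / Möbius band type cancellation: squaring the twisted element creates folding partners for exactly the blocks that were forced to stay unpaired in a single copy of $\overline w$, so the residual count collapses from quadratic to linear and one obtains $\|\overline w^2\|\le 8m-4$.

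For the lower bounds I would use the free nilpotent quotients of $F$. Write $V$ for the abelianization and identify $\gamma_2(F)/\gamma_3(F)$ with $\wedge^2 V$. A computation in $F/\gamma_3(F)$ gives $T_j\equiv[x_j,y_j]^m$, and since each $f^{(j)}$ acts as the identity on $\gamma_2/\gamma_3$ one finds $\overline w\equiv x_0\cdot\prod_{j=3}^{2m}[x_j,y_j]^m$, so the commutator part of $\overline w^2$ in $\wedge^2 V$ is $2m\sum_{j=3}^{2m}\bar x_j\wedge\bar y_j$. On the other hand, a product $v_1\cdots v_k$ of conjugates of generators, with $v_i$ a conjugate of $t_i$, has commutator part lying in $\mathrm{span}\{\bar t_1,\dots,\bar t_k\}\wedge V$; as $\mathrm{span}\{\bar t_i\}$ is a coordinate subspace, for the component $2m\,\bar x_j\wedge\bar y_j$ to survive one needs $\bar x_j$ or $\bar y_j$ among the $\bar t_i$ for every $j$, and $\bar x_0$ among them too, forcing $k\ge 2m-1\ge 2m-2$ and hence $\|\overline w^2\|\ge 2m-2$. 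The bound $\|\overline w\|\ge m^2-3m+5$ lies deeper, since the $2$-step quotient gives only a linear estimate: here I would pass to the higher quotients $F/\gamma_c(F)$ and track a genus-type obstruction accumulating across the $\sim m$ nesting levels — $\overline w$ is a $(2m-2)$-fold nested commutator, detectable only by nilpotency class growing with $m$, with per-level contributions adding up quadratically, whereas for $\overline w^2$ that obstruction dissolves, being a square, which is precisely why $\|\overline w^2\|$ stays linear.

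The step I expect to be the main obstacle is this last one — proving $\|\overline w\|\ge m^2-3m+5$ with the stated quadratic growth and the exact constant. One must show the obstruction genuinely grows like $m^2$ rather than linearly, which requires a careful level-by-level inductive analysis of how conjugates of generators can contribute to the nested commutator structure in each free nilpotent quotient; pinning down the exact constant, as opposed to merely $\Theta(m^2)$, forces precise bookkeeping, and the matching upper bound likewise demands a precisely engineered folding rather than the crude decomposition $x_0\prod_j f^{(j)}(T_j)$.
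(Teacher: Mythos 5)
Your part (a) is exactly the paper's count, and your lower bound for $||\overline{w}^2||$ is a genuinely different and attractive route: the paper gets $||\overline{w}^2||\ge 2m-2$ from the heavy folding induction (Theorem \ref{lower-bound}(a)), whereas your computation $T_j\equiv[x_j,y_j]^m \pmod{\gamma_3(F)}$, the fact that each $f_j$ induces the identity on $F/\gamma_2$ and hence on $\gamma_2/\gamma_3\cong\wedge^2 V$, and the coordinate-subspace argument in $\wedge^2V$ (or, equivalently, retracting onto each $\langle x_j,y_j\rangle$ and passing to the Heisenberg quotient) combined with Theorem \ref{norm-equi-form} is correct and even gives the slightly better bound $2m-1$. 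Your upper-bound sketch is also in the right spirit -- the identity $(ab)^m(ba)^{-m}=a\cdot\bigl((ba)^{m-1}(ba^{-1}b^{-1})(ba)^{-(m-1)}\bigr)$ is precisely the kind of manipulation the paper uses -- but be aware that the naive estimate from the decomposition $\overline{w}=x_0\,f^{(2m)}(T_{2m})\cdots f^{(3)}(T_3)$, namely $||\overline{w}||\le 1+2\sum_j||f^{(j)}(x_j)||$, does not give $O(m^2)$ directly; the paper's Theorems \ref{upper-bound} and \ref{new-upper-bound} need the extra trick of pairing the leftover single letter with half of an adjacent twisted block (recursion $N_k\le N_{k-1}+8k-4$), and your ``level-by-level folding yielding $4m^2-8m+5$'' is asserted rather than carried out.

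The genuine gap is the one you yourself flag: the quadratic lower bound $||\overline{w}||\ge m^2-3m+5$, which is the heart of the theorem, and your proposed mechanism for it does not stand up as described. First, $\overline{w}$ is not a ``$(2m-2)$-fold nested commutator'': it is not even in $\gamma_2(F)$ (its abelianization is $\bar x_0$), and modulo $\gamma_3$ it is just $x_0\prod_j[x_j,y_j]^m$, so nilpotent quotients of any fixed class see it easily; the question is not detectability but why every decomposition into conjugates of generators must use $\Theta(m^2)$ of them, and you give no invariant in $F/\gamma_c(F)$ whose value grows quadratically under the maps $f_j$, nor any reason the per-level contributions add rather than cancel when conjugators are chosen adversarially. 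The paper's proof of this step is entirely different in nature: it is an induction on $k$ over pairs of consecutive maps $f_{2m-2k+1},f_{2m-2k+2}$, using Lemma \ref{cancel-out-lemma} (the parity-dependent left/right placement of $T_j$ guarantees the twisted blocks cannot cancel cyclically at both of two consecutive levels) together with Lemmas \ref{many-unpaired-position-lemma}, \ref{single-letter-lemma} and \ref{multi-letter-lemma}, which force, in every minimal folding, either a gain of $4m$ unpaired letters or a splitting of the conjugacy class into two shorter pieces $u,v$ with $||f(f_k(w))||\ge||f(u)||+||f(v)||$; the recursion over odd/even word lengths then yields $k^2-k+3$, and the exact constant $m^2-3m+5$ comes out of that bookkeeping. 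Without an argument of this kind (or a genuinely worked-out quantitative obstruction in the nilpotent tower), part (b) of the theorem remains unproved in your proposal, and the same precise-folding work is still owed for the stated constants in the upper bounds.
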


Theorem $\ref{main}$ allows us to construct the desired closed plane curve in the following way.

\begin{theorem}\label{main-curve}
For any positive real number $\varepsilon$, there exists a piecewise smooth closed curve $\gamma$ on the plane with $\mathrm{Area}(H_2)<\varepsilon\mathrm{Area}(H_1)$, where $H_1$ (resp., $H_2$) is a piecewise smooth homotopy from $\gamma$ (resp., $2\cdot \gamma$) to a constant curve with minimum area.
\end{theorem}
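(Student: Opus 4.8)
The plan is to deduce Theorem~\ref{main-curve} from Theorem~\ref{main} together with the geometric realization provided by Theorem~\ref{main-relation}. The key point is to convert the purely group-theoretic bounds on $\overline{w}=f^{(2)}(x_0)$ into a statement about areas of null homotopies of an actual plane curve, and then to choose the parameter $m$ large enough to beat the prescribed $\varepsilon$.

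First I would fix $m$ large enough that $\frac{8m-4}{m^2-3m+5}<\varepsilon$; this is possible since by Theorem~\ref{main} the left-hand side is $\Theta(1/m)\to 0$. With $m$ fixed, $\overline{w}$ is a specific element of the free group $F$ on finitely many generators $x_i,y_i$ (only those with $0\le i\le 2m$ actually occur, since $f^{(2)}$ involves only $f_2,\dots,f_{2m}$). Next I would build a piecewise smooth closed plane curve $\gamma$ whose associated group element, in the sense of Section~4, is $\overline{w}$, and such that every bounded complementary region $R_i$ has area exactly $1$. Concretely, one realizes the reduced word representing $\overline{w}$ as a generic closed polygonal (or smooth) curve in the plane: each generator $x_i$ (resp.\ $y_i$) corresponds to winding once, positively, around a designated small disk $D_i$, and one arranges the $4m+2$ disks in general position so that the curve is the concatenation of loops around them in the order dictated by the word. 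By scaling each disk we may assume every bounded region it cuts out has area $1$, so that the weight function of Section~4 is $\mathrm{wt}\equiv 1$ and the weighted cancellation norm on $\pi_1$ of the complement is exactly the (unweighted) cancellation norm $\|\cdot\|$ used in Section~5.

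With such a $\gamma$ in hand, Theorem~\ref{main-relation} (applied with $\gamma_1=\gamma$, $\gamma_2$ a constant curve, so that $g_2=1$ and, by Theorem~\ref{distance-property-II}(a), $d(g_1,1)=\|\overline{w}\|$) gives that the minimum area $\mathrm{Area}(H_1)$ of a null homotopy of $\gamma$ equals $\|\overline{w}\|$. The same theorem applied to the curve $2\cdot\gamma$, whose associated group element is $\overline{w}^2$, gives $\mathrm{Area}(H_2)=\|\overline{w}^2\|$; here one should note that $2\cdot\gamma$ traces the same disks as $\gamma$, hence divides the plane into the same regions with the same areas, so Theorem~\ref{main-relation} applies verbatim. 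Combining with the bounds of Theorem~\ref{main}(b),(c),
\[
\frac{\mathrm{Area}(H_2)}{\mathrm{Area}(H_1)}=\frac{\|\overline{w}^2\|}{\|\overline{w}\|}\le\frac{8m-4}{m^2-3m+5}<\varepsilon,
\]
which is the desired conclusion.

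The main obstacle I anticipate is the realization step: one must check carefully that a generic closed plane curve can be constructed whose class in the fundamental group of the complement of the bounded regions is precisely $\overline{w}$, and that the hypotheses of Theorem~\ref{main-relation} (piecewise smoothness, finitely many regions, the identification of $\pi_1$ with the free group on the $x_i$) are genuinely met. The cleanest route is probably the standard one: take pairwise disjoint small round disks $D_0,\dots,D_{2m}$ for the $x_i$'s and $D'_0,\dots,D'_{2m}$ for the $y_i$'s, all of area $1$, placed along a line, and let $\gamma$ run through a basepoint and execute a small loop (clockwise or counterclockwise according to the sign of the letter) around the appropriate disk for each letter of a reduced word for $\overline{w}$, in order; minor perturbation makes $\gamma$ simple-except-at-the-basepoint and piecewise smooth, the bounded regions are exactly the disks (each of area $1$), and van Kampen identifies $\pi_1$ of the complement with $F_{2m}$ in the required way, sending the homotopy class of $\gamma$ to $\overline{w}$. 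Everything else is a direct substitution into results already proved, so once the realization is pinned down the proof is short.
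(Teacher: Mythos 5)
Your overall strategy is exactly the paper's: pick $m$ with $\frac{8m-4}{m^2-3m+5}<\varepsilon$, realize $\overline{w}=f^{(2)}(x_0)$ by a closed plane curve whose bounded complementary regions all have area $1$, and then combine Theorem \ref{main-relation} (with $g_2=1$, using Theorem \ref{distance-property-II}(a)) with the bounds of Theorem \ref{main}. The gap is in the realization step, which you yourself flagged: the construction you describe does not have the property you claim for it. If $\gamma$ is ``simple except at the basepoint'' and consists of one petal per letter of a reduced word for $\overline{w}$ (a word of length $(4m+1)^{2m-2}$, so each generator occurs enormously many times), then the petals encircling the same disk are nested, and between consecutive nested petals there are additional bounded complementary regions of positive area, besides regions created by the connecting arcs near the basepoint. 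So the bounded regions are \emph{not} exactly the $4m+2$ disks: the free group furnished by Theorem \ref{main-relation} has many more generators, the weight function is not identically $1$, the class of $\gamma$ in that group is not literally $\overline{w}$, and the identities $\mathrm{Area}(H_1)=||\overline{w}||$, $\mathrm{Area}(H_2)=||\overline{w}^2||$ do not hold as stated. One can salvage your generic picture, but only with extra work: killing the extra generators gives $\mathrm{Area}(H_1)\ge||\overline{w}||$, while the upper bound $\mathrm{Area}(H_2)\le||\overline{w}^2||+\delta$ needs a perturbation argument making the total weighted contribution of the thin extra regions arbitrarily small; none of this is in your proposal.

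The clean repair is the paper's: drop genericity. Theorem \ref{main-relation} only requires that the curve divide the plane into finitely many regions, so you may let $\gamma$ retrace the same arcs repeatedly. The paper takes $4m+2$ unit squares in a row and lets $\gamma$ be a piecewise linear curve with image contained in the union $T$ of their boundaries, representing $\overline{w}$ in $\pi_1(T)\cong F_{2m}$ (free of rank $4m+2$); then the bounded regions are exactly the unit squares, all weights are $1$, and $2\cdot\gamma$ represents $\overline{w}^2$, so Theorem \ref{main-relation} gives $\mathrm{Area}(H_1)=||\overline{w}||$ and $\mathrm{Area}(H_2)=||\overline{w}^2||$ verbatim, after which Theorem \ref{main} finishes the proof. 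Equivalently, in your picture you could have each occurrence of a given letter retrace the identical loop around its disk rather than perturbing to general position. With that change your argument coincides with the paper's.
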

\begin{proof}
Take a positive integer $m$ such that $\frac{8m-4}{m^2-3m+5}<\varepsilon$. Let $R_i$ denote the open square $\{ (x,y)|i<x<i+1,0<y<1\}$ on the plane for each integer $1\le i\le 4m+2$. Let $T$ be the union of boundaries of $R_i$ where $i$ ranges over $1,\cdots,4m+2$, then the fundamental group of $T$ is isomorphic to $F_{2m}$. Let $\gamma$ be a piecewise linear closed plane curve which takes value in $T$ such that $\gamma$ represents $\overline{w}=f^{(2)}(x_0)$ in $\pi_1(T)$, then $2\cdot \gamma$ represents $\overline{w}^2$ in $\pi_1(T)$. By Theorem \ref{main-relation}, the minimum area of a piecewise smooth homotopy from $\gamma$ (resp., $2\cdot \gamma$) to a constant curve is equal to $||\overline{w}||$ (resp., $||\overline{w}^2||$). By Theorem \ref{main}, we know $0<\mathrm{Area}(H_2)<\varepsilon\mathrm{Area}(H_1)$.
\end{proof}
Similar questions involving the closed curve $p \cdot \gamma$ formed by traversing the original curve $p$ times remain open for odd primes $p$. 

\subsection{Upper Bounds for \texorpdfstring{$\overline{w}$ }{Lg} and \texorpdfstring{$\overline{w}^2$ }{Lg}}
Define the group homomorphism $h: F\rightarrow F$ by
$$h(a_i)=\begin{cases} x_0&\mbox{, if }i\mbox{ is even}\\x_1 &\mbox{, otherwise}\end{cases}$$
for $a_i\in\{x_i,y_i\}$. Let $W$ denote the subgroup of $F$ generated by $x_0$, $x_1$, then we have $h(w)\in W$ for any $w\in F$.

The following theorem provides an upper bound for the cancellation norm of $||w^2||$.

\begin{theorem}\label{upper-bound}
Suppose $k$ is a nonnegative integer with $k\le m-1$ and $s_0, s_1$ are two elements in $F_{2m-2k}\cap S$ such that $h(s_0)=x_0$ and that $h(s_1)=x_1$ or $h(s_1)=x_1^{-1}$. Then 
$$||f^{(2m-2k)}(s_0s_1)||\le 4k+2.$$
\end{theorem}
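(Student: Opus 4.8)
The plan is to argue by induction on $k$. The base case $k=0$ is immediate: $f^{(2m)}$ is the empty composition, hence the identity, so $f^{(2m)}(s_0 s_1)=s_0 s_1$ is a word of length two and $||s_0 s_1||\le 2=4\cdot 0+2$.

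For the inductive step, assume the bound for $k-1$ and factor $f^{(2m-2k)}=f^{(2m-2k+2)}\circ f_{2m-2k+2}\circ f_{2m-2k+1}$. The hypotheses $h(s_0)=x_0$ and $h(s_1)\in\{x_1,x_1^{-1}\}$ force $s_0=a_i$ with $i$ even and $s_1=a_j^{\pm 1}$ with $j$ odd, and $i,j\le 2m-2k<2m-2k+1$. Since $2m-2k+1$ is odd, the defining formula for $f_{2m-2k+1}$ gives $f_{2m-2k+1}(a_i)=a_i T_{2m-2k+1}$ and $f_{2m-2k+1}(a_j)=T_{2m-2k+1}^{-1}a_j$; hence $f_{2m-2k+1}(s_0 s_1)=s_0 s_1$ when $s_1=a_j$ (the two copies of $T_{2m-2k+1}$ cancel), and $f_{2m-2k+1}(s_0 s_1)=s_0\,T_{2m-2k+1}\,s_1\,T_{2m-2k+1}$ when $s_1=a_j^{-1}$. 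Applying $f_{2m-2k+2}$ (an even-indexed map, under which every generator is right-multiplied by $T_{2m-2k+2}$) and then $f^{(2m-2k+2)}$, one writes out $f^{(2m-2k)}(s_0 s_1)$ and bounds its norm using the subadditivity, conjugacy-invariance and cyclic-invariance of $||\cdot||$ (Theorem \ref{norm-defining-property}), together with the inductive hypothesis applied to suitable words $s_0'' s_1''$ over $F_{2m-2k+2}\cap S$ with $h(s_0'')=x_0$ and $h(s_1'')\in\{x_1,x_1^{-1}\}$, whose $f^{(2m-2k+2)}$-image has norm at most $4(k-1)+2$. The aim of the step is to exhibit $f^{(2m-2k)}(s_0 s_1)$ as the product of such an inductively controlled piece with at most four additional conjugates of generators, so that Theorem \ref{norm-equi-form} upgrades the bound from $4k-2$ to $4k+2$; the four extra conjugates are accounted for two by the odd map $f_{2m-2k+1}$ and two by the even map $f_{2m-2k+2}$.

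I expect the main obstacle to be that $f^{(2m-2k+2)}$ is very far from norm-non-increasing — indeed $||f^{(2)}(x_0)||$ grows like $m^2$ by Theorem \ref{main} — so one cannot merely bound a preimage of $f^{(2m-2k)}(s_0 s_1)$; the decomposition into conjugates of generators has to be produced explicitly and then carried through the homomorphism. The device that makes this feasible is the commutator identity $T_\ell=[(x_\ell y_\ell)^{m-1}x_\ell,\,y_\ell]$, which under any homomorphism $\varphi$ becomes $\varphi(T_\ell)=[(\varphi(x_\ell)\varphi(y_\ell))^{m-1}\varphi(x_\ell),\,\varphi(y_\ell)]$, combined with the observation that $f^{(\ell)}(x_\ell)$ and $f^{(\ell)}(y_\ell)$ are obtained from $x_\ell$ and $y_\ell$ by the \emph{same} sequence of left- and right-multiplications by twisted elements. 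Consequently $f^{(\ell)}(x_\ell)=L_\ell x_\ell M_\ell$ and $f^{(\ell)}(y_\ell)=L_\ell y_\ell M_\ell$ with common $L_\ell,M_\ell$, so that $f^{(\ell)}(y_\ell)f^{(\ell)}(x_\ell)^{-1}$ and $f^{(\ell)}(x_\ell)^{-1}f^{(\ell)}(y_\ell)$ are conjugates of $y_\ell x_\ell^{-1}$ and $x_\ell^{-1}y_\ell$.

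Pushing $f^{(2m-2k+2)}$ through the expression for $f^{(2m-2k)}(s_0 s_1)$ and repeatedly using these cancellations collapses the (a priori exponentially long) images of the twisted elements down to a product of $O(k)$ conjugates of generators. The delicate part of the argument is the bookkeeping of exactly which twisted-element images survive in the two cases $s_1=a_j$ and $s_1=a_j^{-1}$ and how they pair up under the folding; it is there that one has to be careful to lose no more than two conjugates per map, and this is the technical heart on which the clean constant in $4k+2$ rests.
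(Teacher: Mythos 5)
Your skeleton is the paper's: induction on $k$ with the trivial base case, factoring through $f_{2m-2k+1}$ and $f_{2m-2k+2}$, the parity-based cancellation $f_{2m-2k+1}(s_0s_1)=s_0s_1$ when $h(s_1)=x_1$, and the observation (cf.\ Lemma \ref{uv-lemma}) that generators of equal parity acquire the \emph{same} left and right multipliers under $f^{(\ell)}$, so that images of two-letter words such as $x_j^{-1}y_j$ are conjugates of those words and have norm at most $2$. But the inductive step itself is never carried out: you state as an ``aim'' that $f^{(2m-2k)}(s_0s_1)$ can be exhibited as an inductively controlled image of a two-letter word times at most four conjugates of generators, and you explicitly defer the ``bookkeeping'' that produces the constant --- and that bookkeeping \emph{is} the theorem. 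The missing step is a concrete regrouping: writing the second twist as $T_j=x_j(y_jx_j)^mx_j^{-1}\cdot y_j(x_jy_j)^{-m}y_j^{-1}$, the element $s_0T_js_1T_j$ is conjugate to
$$\bigl((x_jy_j)^{-m}y_j^{-1}s_0(x_jy_j)^{m}\bigr)\cdot\bigl((y_jx_j)^{-m}s_1x_j(y_jx_j)^{m}\bigr)\cdot\bigl(x_j^{-1}y_j\bigr),$$
so by Theorem \ref{norm-defining-property} the norm of its image is at most $||f(y_j^{-1}s_0)||+||f(s_1x_j)||+||f(x_j^{-1}y_j)||$, where the first and third terms are $\le 2$ each (conjugates of two-letter words, by the common-multiplier fact) and only the middle term is fed to the induction.

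Note two things this does that your sketch does not. First, the word handed to the induction is not built from $s_0$ and $s_1$: it swaps one of them for a generator of the new level ($x_js_1$ with $j=2m-2k+2$ when $h(s_1)=x_1$; $s_0y_j^{-1}$ with $j=2m-2k+1$ when $h(s_1)=x_1^{-1}$), while the other original letter is absorbed into one of the norm-$2$ conjugates. Identifying this replacement is the non-obvious part of the step, and your ``suitable words $s_0''s_1''$'' leaves it unspecified. Second, your cost accounting ``two conjugates per map'' is not how the count goes: when $h(s_1)=x_1$ the odd map costs nothing (its twists cancel outright) and the even map accounts for all four extra conjugates, while when $h(s_1)=x_1^{-1}$ the odd map costs four and the even map is absorbed for free (its twists either cancel or act by conjugation). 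Since verifying that exactly these four units of norm are lost per step is the heart of the proof, leaving it as an aim is a genuine gap, even though the ingredients you name (the commutator structure of $T_\ell$, the common multipliers $L_\ell,M_\ell$, cyclic invariance, and Theorem \ref{norm-equi-form}) are indeed the ones the completed argument uses.
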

\begin{proof}
We prove the theorem by induction on $k$. The case where $k=0$ is trivial. Now assume that $k=k'$ ($1\le k'\le m-1$) and the theorem is proved for the case where $k=k'-1$. For conciseness, we denote $f^{(2m-2k+2)}$ by $f$ and denote $f^{(2m-2k+1)}=f\circ f_{2m-2k+2}$ by $\tilde{f}$. 

If $h(s_1)=x_1$, then for $j=2m-2k+2$, we have \begin{align*}&||f^{(2m-2k)}(s_0s_1)||\\=&||f\circ f_{j}\circ f_{j-1}(s_0s_1)||\\=&||f\circ f_{j}(s_0s_1)|| \\=&||f(s_0 T_{j} s_1 T_{j})||\\=&||f(s_0 (x_j y_j)^m (y_j x_j)^{-m} s_1 x_j (y_j x_j)^{m}x_j^{-1} y_j (x_j y_j)^{-m}y_j^{-1})||\\=&||f((x_jy_j)^{-m}y_j^{-1}s_0(x_j y_j)^{m})f((y_j x_j)^{-m}s_1 x_j (y_j x_j)^m)f(x_j^{-1}y_j)||\\\le&||f(y_j^{-1}s_0)||+||f(s_1 x_j)||+||f(x_j^{-1} y_j)||\\=& ||f(s_1 x_j)||+ ||y_j^{-1}s_0||+||x_j^{-1} y_j||\\=&||f(x_j s_1)||+4\\\le &4k+2\end{align*}
by inductive hypothesis.

If $h(s_1)=x_1^{-1}$, then for $j=2m-2k+1$, we have \begin{align*}&||f^{(2m-2k)}(s_0s_1)||\\=&||\tilde{f}\circ f_{j}(s_0s_1)||\\=&||\tilde{f}(s_0 T_j s_1 T_j)|| \\=&||\tilde{f}(s_0 (x_j y_j)^m (y_j x_j)^{-m} s_1 x_j (y_j x_j)^{m}x_j^{-1} y_j (x_jy_j)^{-m}y_j^{-1})||\\=&||\tilde{f}((x_jy_j)^{-m}y_j^{-1}s_0(x_j y_j)^{m})\tilde{f}((y_j x_j)^{-m}s_1 x_j (y_j x_j)^m)\tilde{f}(x_j^{-1}y_j)||\\\le&||\tilde{f}(y_j^{-1}s_0)||+||\tilde{f}(s_1 x_j)||+||\tilde{f}(x_j^{-1} y_j)||\\=&||f\circ f_{j+1}(s_0 y_j^{-1})||+ ||s_1 x_j||+ ||x_j^{-1} y_j||\\=&||f(s_0 y_j^{-1})||+4\\\le &4k+2\end{align*}
by inductive hypothesis.

Therefore the theorem for $k=k'$ is proved and we are done.
\end{proof}
\begin{corollary}\label{upper-bound-col}
Suppose $k$ is a nonnegative integer with $k\le m-1$ and $w$ is an element in $W$ represented by a word of even length $n$. Then 
$$||f^{(2m-2k)}(w)||\le (4k+2)n.$$
\end{corollary}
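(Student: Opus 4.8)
The plan is to reduce Corollary~\ref{upper-bound-col} to Theorem~\ref{upper-bound} by chopping the word representing $w$ into consecutive blocks of length two. Concretely, write $w=l_1l_2\cdots l_n$ as a word over the generators $x_0^{\pm 1},x_1^{\pm 1}$ of $W$ (recall $n$ is even), and group the letters into $n/2$ pairs $p_r:=l_{2r-1}l_{2r}$ for $r=1,\dots,n/2$, so that $w=p_1p_2\cdots p_{n/2}$. Since $f^{(2m-2k)}$ is a group homomorphism, $f^{(2m-2k)}(w)=f^{(2m-2k)}(p_1)\cdots f^{(2m-2k)}(p_{n/2})$, and because the cancellation norm is subadditive under products (Theorem~\ref{norm-defining-property}, or equivalently the triangle inequality for $\|\cdot\|$ viewed as $d(\cdot,1)$), we get
\begin{equation*}
\|f^{(2m-2k)}(w)\|\le \sum_{r=1}^{n/2}\|f^{(2m-2k)}(p_r)\|.
\end{equation*}
So it suffices to bound each $\|f^{(2m-2k)}(p_r)\|$ by $4k+2$.

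The issue is that Theorem~\ref{upper-bound} is stated only for a two-letter word $s_0s_1$ where $h(s_0)=x_0$ and $h(s_1)=x_1^{\pm1}$ --- i.e.\ the first letter sits ``in an even slot'' and the second ``in an odd slot.'' A general pair $p_r=l_{2r-1}l_{2r}$ over $\{x_0^{\pm1},x_1^{\pm1}\}$ can be of four shapes: $(x_0^{\pm},x_1^{\pm})$, $(x_1^{\pm},x_0^{\pm})$, $(x_0^{\pm},x_0^{\pm})$, $(x_1^{\pm},x_1^{\pm})$. I would handle these by symmetry arguments that reduce each to the hypothesis of Theorem~\ref{upper-bound}. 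For the case $(x_1^{\pm},x_0^{\pm})$, note $\|f^{(2m-2k)}(ab)\|=\|f^{(2m-2k)}(ba)\|$ by conjugacy invariance (Theorem~\ref{norm-defining-property} gives $\|uv\|=\|vu\|$, and $f^{(2m-2k)}$ is a homomorphism), and $ba$ now has the required shape. For a pair of the form $(x_0^{\pm},x_0^{\pm})$, I would observe that $f^{(2m-2k)}$ is actually a \emph{homomorphism} — so if the pair equals $x_0^{\varepsilon}x_0^{\delta}$ with $\varepsilon=-\delta$ it is trivial, and if $\varepsilon=\delta$ one can instead write $x_0^{2\varepsilon}$ and bound $\|f^{(2m-2k)}(x_0^{\varepsilon})\|$ twice; but in fact the cleanest route is to note that in the construction, $\|f^{(2m-2k)}(s)\|\le 2k+1$ for a single letter $s\in F_{2m-2k}\cap S$ (take $s_0=s$, $s_1=s^{-1}$, apply Theorem~\ref{upper-bound}, and use that $\|g\|\le\|gh\|+\|h^{-1}\|$... ). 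Honestly, the slickest thing is: for any single letter $s$ with $h(s)=x_0$, Theorem~\ref{upper-bound} applied to $s_0 s_1$ with $s_1$ chosen so that the product is as short as possible still gives a bound, but the additive-per-pair bookkeeping above is what the author likely intends, so I would push everything through pairs and record a one-letter bound $\|f^{(2m-2k)}(s)\|\le 2k+1$ as an auxiliary fact if needed for odd-length adjustments.

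The main obstacle I anticipate is \emph{not} the arithmetic but making the reduction of arbitrary pairs to the special shape of Theorem~\ref{upper-bound} airtight, since the theorem's hypothesis pins down which generator index is even and which is odd, and $h$ only records parity of the index — so for, say, a pair $(x_0^{\pm},x_0^{\pm})$ there is genuinely no way to present it as $s_0s_1$ with $h(s_0)=x_0,\ h(s_1)=x_1^{\pm1}$, and one must either invoke homomorphism-linearity to split it or insert a cancelling pair of letters. I expect the author's intended proof simply inserts, between $p_r$ and $p_{r+1}$, a trivial factor to realign parities, or more likely just re-pairs the word as $l_1\cdot(l_2l_3)\cdot(l_4l_5)\cdots$ when convenient; since $w$ has even length, I would instead argue that \emph{any} length-two word $ab$ over $\{x_0^{\pm1},x_1^{\pm1}\}$ satisfies $\|f^{(2m-2k)}(ab)\|\le 4k+2$, proving this directly by the same induction as Theorem~\ref{upper-bound} but allowing all four parity combinations for $(a,b)$ — the inductive step is symmetric in $x$ and $y$ and in inversion, so the extra cases cost nothing. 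With that strengthened lemma in hand, summing over the $n/2$ pairs via subadditivity of $\|\cdot\|$ gives $\|f^{(2m-2k)}(w)\|\le (4k+2)(n/2)$; but since the claimed bound is $(4k+2)n$, which is weaker, even the crude splitting into $n$ single letters with the one-letter bound $\|f^{(2m-2k)}(s)\|\le 2k+1$ suffices, so there is comfortable slack and no real difficulty remains once the pairing is set up.
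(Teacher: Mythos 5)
Your overall skeleton (split $w$ into $n/2$ length-two blocks, use subadditivity of the norm, and reduce each block to Theorem \ref{upper-bound}) is also the paper's skeleton, and your treatment of the mixed blocks $(x_0^{\pm1},x_1^{\pm1})$ and $(x_1^{\pm1},x_0^{\pm1})$ via cyclic and inverse invariance is fine. The genuine gap is exactly the case you flagged and then waved away: a non-cancelling same-parity block such as $x_0x_0$ or $x_1x_1$. Your primary route asserts $\|f^{(2m-2k)}(ab)\|\le 4k+2$ for \emph{all} four shapes ``by the same induction, since the inductive step is symmetric,'' but the induction in Theorem \ref{upper-bound} is not symmetric in this sense: it exploits that the two letters have opposite index parity, so that at the odd level either the inserted $T$'s cancel between the two letters (the case $h(s_1)=x_1$), or the word takes the exact shape $s_0T_js_1T_j$ to which the regrouping identity applies, and the leftover two-letter factors such as $y_j^{-1}s_0$ and $x_j^{-1}y_j$ have norm $2$ after applying $f^{(j)}$ precisely because their letters share index parity with one of them inverted, so the $U,V$ elements of Lemma \ref{uv-lemma} cancel. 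Running the same manipulation on $x_0x_0$ loses both of these cancellations; if you track it, you get two chains instead of one and a bound of roughly $8k$, not $4k+2$, and no symmetry argument repairs this (the claimed per-block bound $4k+2$ for all shapes would improve the paper's $\|\overline{w}^2\|\le 8m-4$ to $4m-2$, which the paper does not claim). Your fallback is worse: the one-letter bound $\|f^{(2m-2k)}(s)\|\le 2k+1$ is false. Theorem \ref{upper-bound} cannot be applied with $s_1=s_0^{-1}$ (then $h(s_1)=x_0^{-1}$, violating the hypothesis), the proposed extraction of $\|f(s_0)\|$ from $\|f(s_0s_1)\|$ is circular, and the conclusion contradicts the paper's own lower bound $\|f^{(2m-2k)}(s)\|\ge k^2-k+3$ for odd-length elements of $W$ (Theorem \ref{lower-bound}(b)); the entire construction rests on single letters being quadratically expensive while even-length words are only linearly expensive, so no crude letter-by-letter splitting can give $(4k+2)n$.

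The repair is the option you mentioned in passing but did not pursue: inside a bad block insert a cancelling pair, e.g.\ $x_0x_0=(x_0x_1)(x_1^{-1}x_0)$, so that every length-two element of $W$ is a product of two elements that are conjugates of $x_0x_1$, $x_0x_1^{-1}$ or their inverses. Theorem \ref{upper-bound} together with conjugacy and inverse invariance then bounds each block by $2(4k+2)$, and summing over the $n/2$ blocks gives exactly $(4k+2)n$. This is the paper's proof, and it explains why the stated bound is $(4k+2)n$ rather than $(4k+2)n/2$: a per-block cost of $4k+2$ is simply not available for the same-parity blocks.
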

\begin{proof}
Since each element in $W$ represented by a word of length $2$ can be written as a product of two elements which are conjugates of $x_0x_1$ or $x_0x_1^{-1}$ or their inverses, by Theorem \ref{upper-bound}, the corollary holds for $n=2$. Because any element in $W$ represented by a word of even length $n$ can be written as a product of $\frac{n}{2}$ elements in $W$ represented by a word of length $2$, it holds for any even number $n\ge 0$.
\end{proof}

By Corollary $\ref{upper-bound-col}$, we have $||\overline{w}^2||=||f^{(2)}(x_0^2)||\le 8m-4$.

The following theorem provides an upper bound for the cancellation norm of $||w||$.

\begin{theorem}\label{new-upper-bound}
Suppose $k$ is a nonnegative integer with $k\le m-1$ and $s$ is an element in $F_{2m-2k}\cap S$ such that $h(s)=x_0$. Then
$$||f^{(2m-2k)}(s)||\le 4k^2+1$$
\end{theorem}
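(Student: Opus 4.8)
The plan is to induct on $k$, following the pattern of the proof of Theorem \ref{upper-bound}. The base case $k=0$ is immediate: $f^{(2m)}$ is the empty composition, so $f^{(2m)}(s)=s$ and $\|s\|=1=4\cdot 0^2+1$.

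For the inductive step, let $1\le k\le m-1$ and put $j=2m-2k+2$; then $j$ is even, $j-1$ is odd, $f^{(2m-2k)}=f^{(j)}\circ f_j\circ f_{j-1}$, and $f^{(j)}\circ f_j=f^{(j-1)}$. Abbreviate $f:=f^{(j)}=f^{(2m-2(k-1))}$. Since $s$ has even index at most $j-2$, the ``otherwise'' case of the definition gives $f_{j-1}(s)=sT_{j-1}$, and then $f_j(sT_{j-1})=sT_j\cdot f_j(T_{j-1})$ with $f_j(T_{j-1})=(x_{j-1}T_jy_{j-1}T_j)^m(y_{j-1}T_jx_{j-1}T_j)^{-m}$. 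Applying $f$ and using $f\circ f_j=f^{(j-1)}$ yields the identity
$$f^{(2m-2k)}(s)=f^{(j)}(s)\cdot f^{(j)}(T_j)\cdot f^{(j-1)}(T_{j-1}),$$
so $\|f^{(2m-2k)}(s)\|\le\|f^{(j)}(s)\|+\|f^{(j)}(T_j)\|+\|f^{(j-1)}(T_{j-1})\|$. The inductive hypothesis at level $k-1$ applies to $s$ — which is still admissible since its index is $\le j-2\le 2m-2(k-1)$ and $h(s)=x_0$ — and gives $\|f^{(j)}(s)\|\le 4(k-1)^2+1$. Because $4k^2+1-(4(k-1)^2+1)=8k-4$, the inductive step reduces to the estimate $\|f^{(j)}(T_j)\|+\|f^{(j-1)}(T_{j-1})\|\le 8k-4$.

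To attack that estimate I would start from the two-conjugate decomposition read off from the ``nested shells'' folding of $(xy)^m(yx)^{-m}$: for every $p$ one has $T_p=v_1v_2$ with $v_1$ a conjugate of $x_p$ and $v_2$ a conjugate of $x_p^{-1}$, the conjugators lying in the subgroup generated by $x_p$ and $y_p$; hence applying any homomorphism sending $x_p,y_p$ to elements $a,b$ presents the image as (a conjugate of $a$) times (a conjugate of $a^{-1}$). Taking the homomorphism to be $f^{(p)}$ and using that $f^{(p)}(x_p)$ and $f^{(p)}(y_p)$ differ only in their leading generator and otherwise carry the same inserted tail, the two loose ends should telescope down to a two-letter word of exactly the shape controlled by Theorem \ref{upper-bound} and Corollary \ref{upper-bound-col} at level $k-1$; the goal would be $\|f^{(j)}(T_j)\|\le 4k-2$ and $\|f^{(j-1)}(T_{j-1})\|\le 4k-2$, each being the bound $4(k-1)+2$ of Theorem \ref{upper-bound}, and these sum to $8k-4$.

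The hard part will be precisely this last estimate. The crude bounds on $\|f^{(p)}(T_p)\|$ are useless: writing $T_p$ as a product of $m$ commutator-conjugates gives a bound linear in $m$, while bounding $\|f^{(p)}(v_1)\|+\|f^{(p)}(v_2)\|$ term by term gives roughly $2\|f^{(p)}(x_p)\|$, which the induction controls only by a quantity quadratic in $k$. What is genuinely needed is a folding of $f^{(p)}(T_p)$ — or of the product $f^{(j)}(T_j)f^{(j-1)}(T_{j-1})$ treated as a whole — that cancels the many inserted copies of the various $T_i$ against one another, in the same delicate manner as the case analysis in the proof of Theorem \ref{upper-bound}. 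I would expect to isolate this as a separate lemma bounding $\|f^{(2m-2k)}(T_{2m-2k})\|$ (and the analogous odd-index quantity), proved simultaneously with the main induction and ultimately reduced to the two-letter estimate of Theorem \ref{upper-bound}.
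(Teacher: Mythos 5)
Your setup is fine as far as it goes: the identity $f^{(2m-2k)}(s)=f^{(j)}(s)\cdot f^{(j)}(T_j)\cdot f^{(j-1)}(T_{j-1})$ with $j=2m-2k+2$ is correct, and applying the inductive hypothesis to $s$ does leave a budget of exactly $8k-4$ for $||f^{(j)}(T_j)||+||f^{(j-1)}(T_{j-1})||$. But the estimate you defer as ``the hard part'' is not just hard, it is false, already at $k=1$. There $||f^{(j)}(T_j)||=||T_{2m}||=2$, so you would need $||f^{(j-1)}(T_{j-1})||=||f_{2m}(T_{2m-1})||\le 2$. Write $a=x_{2m-1}$, $b=y_{2m-1}$, $T=T_{2m}$, so $f_{2m}(T_{2m-1})=(aTbT)^m(bTaT)^{-m}$, a nontrivial element all of whose exponent sums vanish. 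By Theorem \ref{norm-equi-form} (unit weights), an element of norm at most $2$ with vanishing exponent sums must be a product of a conjugate of a single generator $t$ and a conjugate of $t^{-1}$, and hence is killed by the retraction sending $t$ to $1$. But killing $a$ gives $[T,(bT^2)^m]\neq 1$, killing $b$ gives $[(aT^2)^m,T]\neq 1$ (the entries do not commute in the free group), killing $x_{2m}$ or $y_{2m}$ sends $T\mapsto 1$ and gives $(ab)^m(ba)^{-m}=T_{2m-1}\neq 1$, and no other generator occurs. Hence $||f_{2m}(T_{2m-1})||\ge 3$ (in fact $\ge 4$ by parity), your budget is violated, and your route proves only $||f^{(2m-2)}(s)||\le 7$ instead of the required $5$. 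There is no reason to expect the quantities $||f^{(p)}(T_p)||$ to stay linear in $k$ at all: they are commutators of elements whose norms grow quadratically.

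The underlying problem is that splitting off $f^{(j)}(T_j)$ and $f^{(j-1)}(T_{j-1})$ by the triangle inequality discards precisely the cancellation between the original letter and the inserted twist that the paper's proof exploits; the paper never estimates $||f^{(p)}(T_p)||$ in isolation. Instead, with $j=2m-2k+1$ it writes $sT_j=(sy_j^{-1})\cdot\bigl((y_jx_j)^m y_j(y_jx_j)^{-m}\bigr)$, so the twist contributes only a conjugate of the single letter $y_j$; the two-letter element $sy_j^{-1}$ costs $4k-2$ by Theorem \ref{upper-bound} (and applying $f_{j+1}$ to it is free because the two inserted copies of $T_{j+1}$ cancel), the same trick converts $y_jT_{j+1}$ into $(y_jy_{j+1}^{-1})$ times a conjugate of $y_{j+1}$ at a further cost $4k-2$, and the single-letter induction is then applied to the new letter $y_{j+1}$ at level $k-1$, not to $s$. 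That is how $4(k-1)^2+1+(4k-2)+(4k-2)=4k^2+1$ arises; to repair your argument you must keep $s$ (and then $y_j$) attached to part of the inserted material rather than bound the twists separately.
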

\begin{proof}
Use induction on $k$. The case where $k=0$ is trivial. Now assume that $k=k'$ ($1\le k'\le m-1$) and the theorem is proved for the case where $k=k'-1$. As in the proof of Theorem $\ref{upper-bound}$, we denote $f^{(2m-2k+2)}$ by $f$ and denote $f^{(2m-2k+1)}=f\circ f_{2m-2k+2}$ by $\tilde{f}$. Then for $j=2m-2k+1$, we have
\begin{align*}
&||f^{(2m-2k)}(s)||\\=&||\tilde{f}\circ f_{j}(s )||
\\=&||\tilde{f}(sT_{j})||
\\=&||\tilde{f}(s (x_{j}y_{j})^m (y_{j}x_{j})^{-m})|| 
\\=&||\tilde{f}(s y_j^{-1} (y_j x_{j})^{m} y_j (y_j x_j)^{-m})|| 
\\=&||\tilde{f}(s y_j^{-1})\tilde{f}((y_j x_{j})^{m} y_j (y_j x_j)^{-m})||
\\\le&||\tilde{f}(y_j)||+||\tilde{f}(s y_j^{-1} )||
\\=&||f(y_j T_{j+1})||+||f(s y_j^{-1})||
\\\le&||f(y_j (x_{j+1}y_{j+1})^m(y_{j+1}x_{j+1})^{-m})||+4k-2
\\=&||f(y_j y_{j+1}^{-1}(y_{j+1}x_{j+1})^m y_{j+1} (y_{j+1}x_{j+1})^{-m} )||+4k-2
\\=&||f(y_j y_{j+1}^{-1})f((y_{j+1}x_{j+1})^m y_{j+1} (y_{j+1}x_{j+1})^{-m} )||+4k-2
\\=&||f(y_{j+1})||+||f(y_j y_{j+1}^{-1})||+4k-2
\\\le&||f(y_{j+1})||+8k-4
\\\le&4(k-1)^2+1+8k-4
\\=& 4k^2+1
\end{align*}
by inductive hypothesis and Theorem $\ref{upper-bound}$.
Therefore the theorem for $k=k'$ is proved and we are done.
\end{proof}
By Theorem $\ref{new-upper-bound}$, we have $||\overline{w}||=||f^{(2)}(x_0)||\le 4m^2-8m+5$.

\subsection{Lower Bounds for \texorpdfstring{$\overline{w}$ }{Lg} and \texorpdfstring{$\overline{w}^2$ }{Lg}}

Before introducing the lower bounds for $||\overline{w}||$ and $||\overline{w}^2||$, we need several technical lemmas for the inductive step in the proof of Theorem \ref{lower-bound}. 

\begin{lemma}\label{uv-lemma}
For each integer $i$ with $2\le i\le2 m$, there exist group elements $U'_i$, $V'_i$, $U''_i$ and $V''_i$ satisfying the following conditions:
\begin{enumerate}[(a)]
\item $U'_i$, $V'_i$, $U''_i$ and $V''_i$ are generated by $\{x_j,y_j | i<j\le 2m\}$;
\item $h(U'_i)=h(V'_i)=h(U''_i)=h(V''_i)=1$;
\item For each integer $j$ with $0\le j\le i$ and $a_j\in\{x_j,y_j\}$, we have $$f^{(i)}(a_j)= \begin{cases}U'_i a_j V'_i&\mbox{, if $j$ is even}\\ U''_i a_j V''_i&\mbox{, if $j$ is odd}. \end{cases}$$
\end{enumerate}
\end{lemma}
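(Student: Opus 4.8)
The plan is to prove Lemma \ref{uv-lemma} by downward induction on $i$, starting from $i=2m$ and decreasing to $i=2$. At each step we unwind one layer of the composition $f^{(i)} = f^{(i+1)} \circ f_{i+1}$ (with the convention that $f^{(2m+1)}$ is the identity), track how a single generator $a_j$ with $j \le i$ gets transformed, and read off the new prefix/suffix elements $U'_i, V'_i, U''_i, V''_i$ from the old ones and from the twisted element $T_{i+1}$.

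First I would set up the base case $i = 2m$. Here $f^{(2m)} = f_{2m}$, and by the definition of $f_j$ we have $f_{2m}(a_j) = a_j T_{2m}$ when $j$ or $2m$ fails to be odd — since $2m$ is even, this is always the case for $a_j \in \{x_j, y_j\}$, $j \le 2m$ — so we may take $U'_{2m} = U''_{2m} = 1$ and $V'_{2m} = V''_{2m} = T_{2m} = (x_{2m}y_{2m})^m(y_{2m}x_{2m})^{-m}$. These lie in the subgroup generated by $\{x_{2m}, y_{2m}\}$, hence condition (a) holds vacuously-strictly (the index range $2m < j \le 2m$ is empty, so actually I should double-check the indexing: the claim is these elements are generated by $\{x_j, y_j \mid i < j \le 2m\}$, which for $i = 2m$ is the empty set, forcing the base-case elements to be $1$). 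So in fact the cleanest base case is $i = 2m$ with $f^{(2m+1)} = \mathrm{id}$ giving $U'_{2m+1} = \cdots = 1$ as a degenerate starting point, and then the first genuine inductive step produces the $T_{2m}$ factors — I would phrase the induction so that the hypothesis at level $i+1$ feeds the construction at level $i$. Condition (b), $h(T_{i+1}) = 1$, follows because $h$ sends $x_{i+1}, y_{i+1}$ to a common value ($x_0$ or $x_1$ depending on parity of $i+1$), so $h(T_{i+1}) = h((x_{i+1}y_{i+1})^m (y_{i+1}x_{i+1})^{-m})$ collapses to something of the form $z^{2m} z^{-2m} = 1$.

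The inductive step is the heart of it. Assume the elements $U'_{i+1}, V'_{i+1}, U''_{i+1}, V''_{i+1}$ exist with the stated properties. Take $j \le i$ and $a_j \in \{x_j, y_j\}$. By definition $f^{(i)}(a_j) = f^{(i+1)}(f_{i+1}(a_j))$. Now $f_{i+1}(a_j)$ is either $a_j T_{i+1}$ or $T_{i+1}^{-1} a_j$ according to the parity rule: it is $T_{i+1}^{-1} a_j$ precisely when both $j$ and $i+1$ are odd. Applying $f^{(i+1)}$ — a homomorphism — and using the inductive formula for $f^{(i+1)}(a_j)$ together with $f^{(i+1)}(x_{i+1}), f^{(i+1)}(y_{i+1})$ (note $i+1 \le i+1$, so the formula applies to these generators too, and since $i+1$ may be even or odd we must pick the right case), we get $f^{(i)}(a_j)$ as a product: for instance, when $j$ is even and $i+1$ even, $f^{(i)}(a_j) = f^{(i+1)}(a_j) f^{(i+1)}(T_{i+1}) = U'_{i+1} a_j V'_{i+1} f^{(i+1)}(T_{i+1})$, so $U'_i := U'_{i+1}$ and $V'_i := V'_{i+1} f^{(i+1)}(T_{i+1})$. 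The other parity combinations are analogous, with $T_{i+1}^{-1}$ contributing a prefix instead of a suffix. Crucially, $f^{(i+1)}(T_{i+1})$ is built only from $f^{(i+1)}(x_{i+1}), f^{(i+1)}(y_{i+1})$, each of which by the inductive hypothesis is a word in $\{x_\ell, y_\ell \mid i+1 < \ell \le 2m\} \cup \{x_{i+1}, y_{i+1}\}$, i.e.\ in $\{x_\ell, y_\ell \mid i < \ell \le 2m\}$; this gives condition (a) for the new elements. For condition (b), $h(U'_i) = h(U'_{i+1}) h(\text{nothing}) = 1$ and $h(V'_i) = h(V'_{i+1}) h(f^{(i+1)}(T_{i+1})) = 1 \cdot h(f^{(i+1)}(T_{i+1}))$; since $h \circ f^{(i+1)}$ must agree with $h$ up to the trivializing action — more carefully, I would prove the auxiliary fact $h \circ f_\ell = h$ for all $\ell$ (because $f_\ell$ multiplies by $T_\ell^{\pm 1}$ and $h(T_\ell) = 1$), hence $h \circ f^{(i+1)} = h$, so $h(f^{(i+1)}(T_{i+1})) = h(T_{i+1}) = 1$.

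The main obstacle I anticipate is purely bookkeeping: keeping the four families $U', V', U'', V''$ and the two parities ($j$ even/odd and, entangled with it, $i+1$ even/odd) consistent, since the asymmetry in the definition of $f_j$ — where $T_j^{-1}$ appears on the \emph{left} only in the all-odd case — means the prefix and suffix roles swap depending on parity. A clean way to manage this is to observe that $i$ and $i+1$ have opposite parities, so "$j$ even vs.\ odd" pairs with "$i+1$ odd vs.\ even" in a fixed way for the generator $a_{i+1}$, and to simply write out a small table of the four cases once. Nothing here requires a genuinely new idea beyond the homomorphism property and the identity $h \circ f_j = h$; the lemma is the structural scaffolding that Theorem \ref{lower-bound} will use to isolate the "core" occurrence of $x_0$ or $x_1$ inside $f^{(i)}(a_j)$ and argue about foldings.
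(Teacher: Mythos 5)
Your proposal follows essentially the same route as the paper: downward induction on $i$, unwinding $f^{(i)}=f^{(i+1)}\circ f_{i+1}$, absorbing the factor $f^{(i+1)}(T_{i+1})^{\pm1}$ into the new $U$'s and $V$'s according to the parity of $j$ and $i+1$, and getting conditions (a) and (b) from the fact that $f^{(i+1)}(T_{i+1})$ lies in the subgroup generated by $\{x_\ell,y_\ell \mid i<\ell\le 2m\}$ together with $h\circ f^{(i+1)}=h$, so $h(f^{(i+1)}(T_{i+1}))=h(T_{i+1})=1$; this is exactly the paper's argument. The only slip is your base-case discussion: since $f^{(j)}=f_{2m}\circ\cdots\circ f_{j+1}$, the map $f^{(2m)}$ is the empty composition, i.e.\ the identity (not $f_{2m}$), so the base case $i=2m$ is simply $U'_{2m}=V'_{2m}=U''_{2m}=V''_{2m}=1$ with no fictitious level $2m+1$ needed, and the first $T$-factors appear only at $i=2m-1$, where $f^{(2m-1)}=f_{2m}$; with that indexing corrected, your inductive step (which already uses the correct recursion) is the paper's proof.
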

\begin{proof}
Use induction on $2m-i$. For the case $i=2m$, we take $U'_i=V'_i=U''_i=V''_i=1$ and the conditions hold. Now assume that $i=i'$ ($2\le i'< 2m$) and the statement is true for $i=i'+1$. Then \begin{align*}
f^{(i)}(a_j)&=f^{(i+1)}\circ f_{i+1} (a_j)\\
&=\begin{cases}f^{(i+1)}(T_{i+1}^{-1}a_j)&\mbox{, if $i$ is even and $j$ is odd}\\f^{(i+1)}(a_jT_{i+1})&\mbox{, otherwise}\end{cases}\\
&=\begin{cases}f^{(i+1)}(T_{i+1})^{-1} U''_{i+1}a_j V''_{i+1}&\mbox{, if $i$ is even and $j$ is odd} \\U''_{i+1} a_j V''_{i+1} f^{(i+1)}(T_{i+1}) &\mbox{, if both $i$ and $j$ are odd}\
\\U'_{i+1} a_j V'_{i+1} f^{(i+1)}(T_{i+1}) &\mbox{, if $j$ is even}\end{cases}
\end{align*}
for each integer $j$ with $0\le j\le i$ and $a_j\in \{x_j,y_j\}$. By taking \begin{align*}U'_i&=U'_{i+1}\\V'_i&=V'_{i+1}f^{(i+1)}(T_{i+1})\\U''_i&=\begin{cases}f^{(i+1)}(T_{i+1})^{-1}U''_{i+1}&\mbox{, if $i$ is even}\\U''_{i+1}&\mbox{, if $i$ is odd}\end{cases}\\V''_i&=\begin{cases}V''_{i+1}&\mbox{, if $i$ is even}\\V''_{i+1}f^{(i+1)}(T_{i+1})&\mbox{, if $i$ is odd}\end{cases}\end{align*}
the condition (c) holds. Because $f^{(i+1)}(T_{i+1})$ is generated by $\{x_j, y_j|i<j\le 2 m\}$ and $h\circ f^{(i+1)}(T_{i+1})=h(T_{i+1})=1$, the conditions (a) and (b) also hold. Therefore the statement for $i=i'$ is proved and we are done.
\end{proof}

\begin{lemma}\label{cancel-out-lemma}
Suppose that $k\ge 2$ is an integer and $w$ is an element in $W$. Let $f:F_{k}\rightarrow F$ be a group homomorphism such that $f(a_i)=U_i a_i V_i$ where $U_i, V_i$ are generated by $\{x_j,y_j|k<j \}$ with $h(U_i)=h(V_i)=1$ for each $0\le i\le k$ and $a_i\in \{x_i,y_i\}$. Then $f(f_k(w))$ and $f(w)$ are conjugate elements if and only if at least one of the following statements holds:
\begin{enumerate}[(a)]
\item There exists an integer $t$, such that $w$ is a conjugate of $(x_0x_1^{-1})^t$ and $k$ is even;
\item There exists an integer $t$, such that $w$ is a conjugate of $(x_0x_1)^t$ and $k$ is odd.
\end{enumerate}

\end{lemma}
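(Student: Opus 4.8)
The plan is to compute the cyclic reduction of $f_k(w)$ explicitly and then to read off the behaviour of $f$ from the free–product normal form of $F$ for the decomposition $F = F' * \langle x_k\rangle * \langle y_k\rangle$, where $F'$ is the subgroup generated by all $a_i$ with $i \neq k$. Note $W \subseteq F'$ (since $k \geq 2$), and the hypotheses $f(a_i) = U_i a_i V_i$ with $U_i, V_i$ generated by indices $> k$ force $f(W) \subseteq F'$. First I would reduce to the case where $w = b_1 \cdots b_l$ is cyclically reduced over $\{x_0^{\pm1}, x_1^{\pm1}\}$: since $W$ is a free factor of $F$, every $w \in W$ is conjugate inside $W$ to such a word, and $f_k$ and $f$ both preserve conjugacy; the case $l = 0$ (i.e.\ $w = 1$) satisfies (a) with $t=0$ if $k$ is even and (b) with $t=0$ if $k$ is odd, so assume $l \geq 1$.

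Next I would describe $f_k(w)$. For each letter $a$, the definition of $f_k$ gives $f_k(a) = a\,T_k$ or $f_k(a) = T_k^{-1}a$ according to a sign $\varepsilon(a) \in \{\pm1\}$: one has $\varepsilon(a) = \operatorname{sign}(a)$ when $k$ is even, and $\varepsilon(x_0) = \varepsilon(x_1^{-1}) = +1$, $\varepsilon(x_0^{-1}) = \varepsilon(x_1) = -1$ when $k$ is odd. Checking the four possibilities for a cyclically consecutive pair $b_s, b_{s+1}$, I expect that $f_k(w)$ reduces to the cyclic word obtained from $b_1 \cdots b_l$ by inserting, between $b_s$ and $b_{s+1}$, a \emph{slot} equal to $T_k$ if $\varepsilon(b_s) = \varepsilon(b_{s+1}) = +1$, equal to $T_k^{-1}$ if both equal $-1$, and empty otherwise; since $T_k$ involves only $x_k, y_k$, no further cancellation occurs, so this is the genuine cyclic reduction of $f_k(w)$.

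The core of the proof is the claim that $f(f_k(w))$ is conjugate to $f(w)$ if and only if all slots are empty. For the ``if'' direction: an elementary inspection of the $\varepsilon$–patterns shows ``all slots empty'' forces the cyclically reduced $w$ to be $(x_0 x_1^{-1})^{\pm t}$ when $k$ is even and $(x_0 x_1)^{\pm t}$ when $k$ is odd, i.e.\ exactly conditions (a) or (b); and in that case $f_k$ fixes this cyclically reduced representative (by telescopings such as $(x_0 T_k)(T_k^{-1} x_1^{-1}) = x_0 x_1^{-1}$), so $f_k(w) \sim w$ and hence $f(f_k(w)) \sim f(w)$. For the converse, assuming some slot is non-empty, I would first record the computation $f(T_k) = U_k\, Q\, U_k^{-1}$ with $Q = (x_k P y_k P)^m (y_k P x_k P)^{-m}$ and $P = V_k U_k \in F'$ (from $f(x_k) = U_k x_k V_k$, $f(y_k) = U_k y_k V_k$ and a short telescoping), and observe that $Q$ is cyclically reduced in $F = F' * \langle x_k\rangle * \langle y_k\rangle$, has syllable length $\geq 2$, and has first and last syllables in $\langle x_k\rangle \cup \langle y_k\rangle$. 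Substituting $f(T_k^{\pm1}) = U_k Q^{\pm1} U_k^{-1}$ into the cyclic word from the previous paragraph --- with $f$ of each $b_s$ lying in $F'$ --- and regrouping, $f(f_k(w))$ turns out to be conjugate to a cyclic word $Q^{\eta_1} D_1 Q^{\eta_2} D_2 \cdots Q^{\eta_r} D_r$ with $r \geq 1$ and $\eta_p = \pm1$, where each $D_p \in F'$ is a conjugate of $f$ applied to a nonempty sub-segment of $w$, hence nontrivial because $f|_W$ is injective; the injectivity follows from $h \circ f = h$ on $F_k$ (immediate from $h(U_i) = h(V_i) = 1$) together with $h|_W = \operatorname{id}_W$. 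Since each $D_p$ is a single nontrivial $F'$–syllable wedged between $\langle x_k\rangle$– or $\langle y_k\rangle$–syllables, this word is cyclically reduced of syllable length $> 1$ and involves $x_k, y_k$, so it is not conjugate into $F'$; as $f(w) \in F'$, the two are not conjugate.

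I expect the ``only if'' direction to be the main obstacle, and within it the free–product bookkeeping confirming that the letters $x_k, y_k$ genuinely survive all reductions: one must check that after the substitution $f(T_k) = U_k Q U_k^{-1}$ the successive blocks $Q^{\pm1}$ cannot annihilate one another, which requires both that the separating pieces $D_p$ are nontrivial elements of $F'$ and that $Q^{\pm1}$ begins and ends with an $x_k$– or $y_k$–syllable so it cannot absorb a neighbouring $D_p$. This is precisely where the standing hypotheses enter: $h(U_i) = h(V_i) = 1$ yields the injectivity of $f|_W$ (used to get $D_p \neq 1$), while ``$U_i, V_i$ are generated by indices $> k$'' keeps $U_i, V_i$ and $P$ inside $F'$ and away from $x_k, y_k$. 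The degenerate cases ($w = 1$, and $l = 1$ where the single slot is automatically non-empty) are then immediate.
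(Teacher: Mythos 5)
Your proposal is correct and follows essentially the same route as the paper's proof: represent the conjugacy class of $w$ by a cyclically reduced word, track where $f_k$ inserts $T_k^{\pm1}$ between consecutive letters, observe that conjugacy of $f(f_k(w))$ with $f(w)$ forces every inserted $f(T_k)^{\pm1}$ to cancel cyclically, and identify ``all insertions cancel'' with the alternating patterns of conditions (a) and (b). The only difference is that you make explicit (via $f(T_k)=U_kQU_k^{-1}$, the separators $D_p$, and injectivity of $f|_W$ from $h\circ f=h$) the free-product bookkeeping that the paper compresses into the phrase ``which implies that all $T_k$'s and $T_k^{-1}$'s cancel out,'' so your write-up is a more detailed version of the same argument.
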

\begin{proof}
Suppose that the conjugacy class of $w$ is represented by a cyclically reduced word $s_1 \cdots s_n$ where the letters $s_1,\cdots,s_n\in \{x_0,x_0^{-1},x_1,x_1^{-1}\}$. Then $f(f_{k}(w))$ is a conjugate of $f(f_{k}(s_1))\cdots f(f_{k}(s_n))$. By definition of $f_{k}$, we know $f(f_{k}(s_i))$ is either $f(s_i) f(T_{k})$ or $f(T_{k})^{-1} f(s_i)$ for $i=1,\cdots, n$. On one hand, if $f(f_k(w))$ and $f(w)$ are conjugate elements, then there is no $x_{k}$, $x_{k}^{-1}$, $y_{k}$ or $y_{k}^{-1}$ in a cyclically reduced word representing the conjugacy class of $f(f_{k}(w))$. In this case, all $f(T_{k})$'s and $f(T_{k})^{-1}$'s cancel out cyclically, which implies that all $T_{k}$'s and $T_{k}^{-1}$'s cancel out in a cyclically reduced word representing the conjugacy class of $f_{k}(w)$, so the condition (a) or (b) holds. On the other hand, if the condition (a) or (b) holds, then we can verify that all $f(T_{k})$'s and $f(T_{k})^{-1}$'s cancel out cyclically in a cyclically reduced word representing the conjugacy class of $f(f_{k}(w))$, so $f(f_{k}(w))$ and $f(w)$ are conjugate elements.
\end{proof}

\begin{lemma}\label{many-unpaired-position-lemma}
Suppose that $k\ge 2$ is an integer and $w$ is an element in $W$. Let $f:F_{k}\rightarrow F$ be a group homomorphism such that $f(a_i)=U_i a_i V_i$ where $U_i, V_i$ are generated by $\{x_j,y_j|k<j \}$ with $h(U_i)=h(V_i)=1$ for each $0\le i\le k$ and $a_i\in \{x_i,y_i\}$. Let $\mathcal{F}$ be a folding of a cyclically reduced word in $S$ representing the conjugacy class of $f(f_{k}(w))$ with the number of unpaired positions equal to $||f(f_{k}(w))||$, and let $t$ be the number of unpaired positions in $\mathcal{F}$ at which the letters are in $\{x_k,x_k^{-1},y_k,y_k^{-1}\}$. Then we have $||f(f_{k}(w))||\ge ||f(w)||+t$.
\end{lemma}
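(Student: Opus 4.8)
The plan is to take the given optimal folding $\mathcal{F}$ of a cyclically reduced word representing the conjugacy class of $f(f_k(w))$ and surgically remove all occurrences of the letters in $\{x_k,x_k^{-1},y_k,y_k^{-1}\}$, producing from it a folding $\mathcal{F}'$ of (a word representing the conjugacy class of) $f(w)$. The point is that $f(f_k(w))$ is obtained from $f(w)$ by inserting, between consecutive images $f(s_i)$, copies of $f(T_k)^{\pm 1}$, and $f(T_k)=f((x_ky_k)^m(y_kx_k)^{-m})=U a_k\cdots$ is a word whose letters at level $k$ are exactly $x_k^{\pm1},y_k^{\pm1}$, interleaved with the level-$(>k)$ words $U_{x_k},V_{x_k},U_{y_k},V_{y_k}$ (which by hypothesis satisfy $h(\cdot)=1$ and involve no level-$k$ letters). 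So the level-$k$ letters in $f(f_k(w))$ come in blocks coming from the inserted $f(T_k)^{\pm1}$'s.

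The key steps, in order: (1) Describe explicitly the cyclically reduced word $W_0$ for the conjugacy class of $f(f_k(w))$ as an interleaving of a word $W_1$ for $f(w)$ together with the inserted $f(T_k)^{\pm1}$ blocks; this uses Lemma \ref{uv-lemma}-style structure of $f$ and the definition of $f_k$, plus the observation that (after cyclic reduction) nothing from $f(w)$ cancels against the inserted blocks except possibly cyclically — and cyclic cancellation of the whole $f(T_k)$ only happens in the excluded cases (a),(b) of Lemma \ref{cancel-out-lemma}, which we may assume do not fully occur, or if they do, handle the count directly. (2) Given $\mathcal{F}$, delete from the ground set every position whose letter is a level-$k$ letter; for a pair $\{i,j\}\in\mathcal{F}$ with both endpoints deleted, discard the pair; a pair with exactly one endpoint at a level-$k$ letter cannot occur because $\mathrm{inv}$ of a level-$k$ letter is again a level-$k$ letter, so the partner is also deleted — hence every pair is either kept intact or discarded entirely. (3) Check that the surviving pairs $\mathcal{F}'$ still form a valid folding on the contracted word: the non-linking condition is preserved under deleting positions (linking is a property of the cyclic/linear order, and restriction to a subsequence cannot create a linking that was not there), and $l_i=\mathrm{inv}(l_j)$ is unchanged. (4) Count: the unpaired positions of $\mathcal{F}'$ are exactly the unpaired positions of $\mathcal{F}$ that survive deletion, i.e. the unpaired positions of $\mathcal{F}$ minus the $t$ unpaired positions carrying level-$k$ letters. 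Therefore $\|f(w)\|\le |\{\text{unpaired positions of }\mathcal{F}'\}| = \|f(f_k(w))\| - t$, which rearranges to the claim.

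The main obstacle I anticipate is step (1): making precise that after cyclically reducing $f(f_k(w))$, the level-$k$ letters genuinely survive as identifiable blocks and the ambient word restricts cleanly to a reduced word for $f(w)$. The subtlety is that $f(w)$ itself need not be cyclically reduced, and cancellations inside $f(w)$, or between a tail of one $f(s_i)$ and a head of $f(T_k)^{\pm1}$, could in principle erode the $T_k$-blocks partially rather than wholesale. I would control this by working with the subgroup $F_k$ versus its complement $\langle x_j,y_j: j>k\rangle$: since $U_i,V_i$ and $f(T_k)$ lie in the complement-generated part while the $a_i$ and the $x_k,y_k$ are the "new" generators, normal-form arguments in the free product $F_{k-1}*\langle x_k,y_k\rangle*(\cdots)$ show the level-$k$ letters can only cancel against each other, and only in the degenerate cyclic situations of Lemma \ref{cancel-out-lemma}. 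A clean way to finish is: if neither (a) nor (b) of Lemma \ref{cancel-out-lemma} holds, at least one full $f(T_k)$-block persists and the interleaving description is exact; if (a) or (b) holds, then $f(f_k(w))$ is conjugate to $f(w)$, forcing $t=0$ (no level-$k$ letters appear at all in the reduced representative), and the inequality is trivial. This case split lets me avoid a delicate uniform cancellation analysis and reduces everything to the transparent interleaving picture.
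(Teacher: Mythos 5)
Your combinatorial core is exactly the paper's mechanism: delete every position carrying a letter of $\{x_k^{\pm 1},y_k^{\pm 1}\}$ from the given cyclically reduced word, note that a pair of $\mathcal{F}$ can never join a level-$k$ letter to a non-level-$k$ letter (a paired letter must meet its inverse, and the inverse of a level-$k$ letter is again level-$k$), that the non-linking condition survives restriction to a subword, and that exactly the $t$ unpaired level-$k$ positions are lost, giving $||f(w)||\le ||f(f_{k}(w))||-t$. The genuine gap is in what you call step (1): you never establish that the deleted word represents a conjugate of $f(w)$, and your interleaving picture does not give this for free. Deleting only the level-$k$ letters does not delete the inserted blocks; each surviving $f(T_k)^{\pm1}$ leaves behind its level-$(>k)$ residue, a word in the $U_k$'s and $V_k$'s, and the whole lemma hinges on the fact that this residue is trivial in $F$. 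That triviality is a special feature of $T_k=(x_ky_k)^m(y_kx_k)^{-m}$ together with the hypothesis that $x_k$ and $y_k$ receive the same $U_k,V_k$: killing $x_k,y_k$ sends both $f(x_k)$ and $f(y_k)$ to $U_kV_k$, so the residue represents $(U_kV_kU_kV_k)^m(U_kV_kU_kV_k)^{-m}=1$; for a different insertion word (say $x_k^{4m}$) the residue would be $(U_kV_k)^{4m}$ and the deleted word would not represent $f(w)$ at all. The paper makes exactly this observation its proof: introduce the retraction $g_k$ with $g_k(x_k)=g_k(y_k)=1$ and $g_k(a_i)=a_i$ otherwise, compute $g_k(f(T_k))=1$, hence $g_k(f(f_k(w)))=f(w)$; then the letter-by-letter deletion is just an application of the homomorphism $g_k$, and the resulting word automatically represents a conjugate of $f(w)$ whether or not it is reduced, since the cancellation norm is defined on arbitrary words and is conjugacy invariant by Theorem \ref{norm-defining-property}.

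Once that one computation is in place, all of your step (1) --- the explicit cyclically reduced interleaving, the worry about partial erosion of blocks, and the case split through Lemma \ref{cancel-out-lemma} --- can be discarded, which is fortunate because parts of it are not correct as stated: level-$k$ letters can cancel against each other outside the degenerate cases (a),(b), since adjacent $T_kT_k^{-1}$ pairs produced by $f_k$ cancel wholesale (for instance $w=x_0x_1x_0x_1^{-1}$ with $k$ odd gives $f_k(w)=x_0x_1x_0T_kx_1^{-1}T_k$), so the claim that cancellation of level-$k$ letters occurs only in the situations of Lemma \ref{cancel-out-lemma} is not one you could prove, and likewise cancellation between $f(w)$-material and the blocks is not only cyclic. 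Your fallback for the degenerate cases (conjugacy forces $t=0$, so the inequality is trivial) is fine, but with the $g_k$ argument no case distinction is needed at all.
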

\begin{proof}
Define the group homomorphism $g_k:F\rightarrow F$ by $$g_k(a_i)=\begin{cases}a_i &\mbox{, if }i\neq k\\1 &\mbox{, otherwise}\end{cases}$$ for each $a_i\in\{x_i,y_i\}$. Because \begin{align*} g_k(f(T_{k}))&=g_k(f((x_{k}y_{k})^m(y_{k}x_{k})^{-m}))\\
&=(g_k(f(x_{k}))g_k(f(y_{k})))^m(g_k(f(y_{k}))g_k(f(x_{k})))^{-m}\\
&=(U_{k}V_{k})^{2m}(U_{k}V_{k})^{-2m}\\&=1,\end{align*}
we obtain $g_k(f( f_{k}(w)))=f(w)$. Since the folding $\mathcal{F}$ induces a folding $\mathcal{F}'$ of the word $g_k(f(f_{k}(w)))$ such that the number of unpaired positions in $\mathcal{F}$ is at least $t$ more than the number of unpaired positions in $\mathcal{F}'$, we have $||f(f_{k}(w))||\ge||g_k(f(T_{k}))||+t= ||f(w)||+t$.
\end{proof}
\begin{lemma}\label{single-letter-lemma}
Suppose that $k\ge 2$ is an integer and $w$ is a conjugate of $x_0$, $x_0^{-1}$, $x_1$ or $x_1^{-1}$. Let $f:F_{k}\rightarrow F$ be a group homomorphism such that $f(a_i)=U_i a_i V_i$ where $U_i, V_i$ are generated by $\{x_j,y_j|k<j \}$ with $h(U_i)=h(V_i)=1$ for each $0\le i\le k$ and $a_i\in \{x_i,y_i\}$. Then at least one of the following statements holds:
\begin{enumerate}[(a)]
\item $||f(f_{k}(w))||\ge ||f(w)||+4m$;
\item $w$ is a conjugate of $x_0$ or $x_0^{-1}$ and $k$ is even;
\item $w$ is a conjugate of $x_1$ or $x_1^{-1}$ and $k$ is odd;

\item There exist two nontrivial elements $u,v\in W$ such that the elements $uv$ and $w$ are conjugate and $||f(f_{k}(w))||\ge||f(u)||+||f(v)||$.
\end{enumerate}
\end{lemma}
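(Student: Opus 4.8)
The plan is to normalise $w$ to a single generator, and then to analyse an optimal folding of a cyclically reduced word representing the conjugacy class of $f(f_k(w))$, splitting into cases according to how many level-$k$ letters that folding leaves unpaired. \textbf{Reduction.} Since the cancellation norm is conjugacy- and inversion-invariant and all four alternatives are unchanged when $w$ is replaced by a conjugate or by $w^{-1}$ (for (d), replace $(u,v)$ by $(v^{-1},u^{-1})$), I may assume $w\in\{x_0,x_1\}$. If $w=x_0$ and $k$ is even, then (b) holds; if $w=x_1$ and $k$ is odd, then (c) holds. The two remaining cases, $w=x_0$ with $k$ odd and $w=x_1$ with $k$ even, are handled identically, because in both the definition of $f_k$ gives $f_k(w)=w\,T_k$. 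I describe the case $w=x_0$, $k$ odd.

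\textbf{Structure of the word.} Put $G:=V_kU_k$. Cancelling the single pair $V_kV_k^{-1}$ inside $f(T_k)=f((x_ky_k)^m)\,f((x_k^{-1}y_k^{-1})^m)$ and then conjugating by $U_k$, the conjugacy class of $f(f_k(x_0))=f(x_0)\,f(T_k)$ is represented by the word
$$P\cdot x_k\,G\,y_k\,G\,x_k\cdots G\,y_k\cdot x_k^{-1}\,G^{-1}\,y_k^{-1}\,G^{-1}\cdots G^{-1}\,y_k^{-1},$$
with $P:=U_k^{-1}U_0x_0V_0U_k$ and $2m$ level-$k$ letters in each of the two blocks. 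Because $U_i,V_i$ involve only levels $>k$ and $h(U_i)=h(V_i)=1$, the letter $x_0$ is the unique letter of level $\le 1$, and no two level-$k$ letters that are adjacent in $f(T_k)$ (read cyclically) are mutually inverse; hence the cyclically reduced word $C$ has exactly one letter of level $\le 1$ and exactly $4m$ letters of level $k$, occurring cyclically in the pattern $(x_ky_k)^m(x_k^{-1}y_k^{-1})^m$.

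\textbf{Dichotomy.} Fix a folding $\mathcal{F}$ of $C$ whose number of unpaired positions equals $||f(f_k(x_0))||$, and let $t$ be the number of unpaired positions carrying a level-$k$ letter. If $t\ge 4m$, then Lemma \ref{many-unpaired-position-lemma}, applied with this $f$ and $w=x_0\in W$, gives $||f(f_k(x_0))||\ge ||f(x_0)||+t\ge ||f(x_0)||+4m$, which is (a). Otherwise, since a paired level-$k$ letter is paired with a level-$k$ letter, $4m-t$ is even, so $t\le 4m-2$ and $\mathcal{F}$ contains a pair $\{p,q\}$ both of whose letters have level $k$; choose such a pair with $q-p$ minimal. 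The unlinked condition then writes $\mathcal{F}=\{\{p,q\}\}\sqcup\mathcal{F}_{\mathrm{in}}\sqcup\mathcal{F}_{\mathrm{out}}$, where $\mathcal{F}_{\mathrm{in}}$ is a folding of the infix $C_{\mathrm{in}}$ of $C$ strictly between positions $p$ and $q$ and $\mathcal{F}_{\mathrm{out}}$ a folding of the cyclic complement, with unpaired counts $\alpha$ and $\beta$ satisfying $\alpha+\beta=||f(f_k(x_0))||$; by minimality of $q-p$, every level-$k$ letter of $C_{\mathrm{in}}$ is $\mathcal{F}$-unpaired.

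\textbf{The main obstacle: deducing (d).} It remains to manufacture, from this splitting, nontrivial $u,v\in W$ with $uv$ conjugate to $x_0$ and $||f(u)||\le\beta$, $||f(v)||\le\alpha$, whence $||f(u)||+||f(v)||\le ||f(f_k(x_0))||$, giving (d). The candidates are $v$, the element of $W$ obtained from $C_{\mathrm{in}}$ by collapsing all generators of level $>k$ and conjugating by $h$ of the pinch letter, and $u$, obtained analogously from $C_{\mathrm{out}}$; then $uv$ is conjugate to $h$ of the element represented by $C$, which is conjugate to $h(f(f_k(x_0)))=h(x_0)=x_0$. The delicate point is the two norm bounds: the naive claim that $g\mapsto f(h(g))$ is norm non-increasing is false, so one must instead exploit the anchored description of $C$ above — consecutive level-$k$ letters are glued by copies of $G^{\pm1}=(V_kU_k)^{\pm1}$, which is precisely the glue that the substitution $f$ inserts — to see that $C_{\mathrm{in}}$, after collapsing levels $>k$, is literally $f(v)$, and that $\mathcal{F}_{\mathrm{in}}$ descends to a folding of $f(v)$ with at most $\alpha$ unpaired positions (and symmetrically for $C_{\mathrm{out}}$ and $u$). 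One then checks that the pinch letter is absorbed into a conjugation, that $u$ and $v$ are genuinely nontrivial — here using that $w$ is a single generator and not a proper power of $x_0x_1$ or $x_0x_1^{-1}$, so the factorization cannot degenerate — and that $uv$ is conjugate to $x_0$. I expect this verification, and in particular pinning down the exact form of $v$ so that the folding truly descends, to be where the bulk of the work lies.
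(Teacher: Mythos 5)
Your reduction to $w\in\{x_0,x_1\}$, the immediate dispatch of alternatives (b) and (c), the count of exactly $4m$ level-$k$ letters in a cyclically reduced representative, and the dichotomy via Lemma \ref{many-unpaired-position-lemma} all agree with the paper's argument. But the decisive case, alternative (d), is not actually proved: you name candidates $u,v$ and the splitting $\mathcal{F}=\{\{p,q\}\}\sqcup\mathcal{F}_{\mathrm{in}}\sqcup\mathcal{F}_{\mathrm{out}}$, and then explicitly defer the two norm bounds (``I expect this verification \dots to be where the bulk of the work lies''). That deferred step is the heart of the lemma, so the proposal has a genuine gap. Worse, the mechanism you gesture at does not work as described: after ``collapsing all generators of level $>k$'', the inner arc $C_{\mathrm{in}}$ becomes a word in the bare letters $x_k^{\pm1},y_k^{\pm1}$ (and possibly $x_0$), while $f(v)$ is a product of blocks $U_0x_0V_0$ or $U_1x_1V_1$ whose glue lives precisely at levels $>k$; the two words are not equal (the collapse also kills your glue $G=V_kU_k$), so $\mathcal{F}_{\mathrm{in}}$ does not ``descend'' to a folding of $f(v)$ in the way you suggest, and no substitute argument is given.

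For comparison, the paper's route performs the pinch at the level of $f_k(w)$ rather than of the big word: writing the conjugacy class of $f_k(w)$ as $au_0a^{-1}v_0$ with $a$ the level-$k$ pinch letter and $u_0$ a level-$k$ word of odd length, one has $f(f_k(w))\sim U_kaV_k\,f(u_0)\,V_k^{-1}a^{-1}U_k^{-1}\,f(v_0)$, and restricting the optimal folding to the inner word $V_kf(u_0)V_k^{-1}$ and the outer word $U_kU_k^{-1}f(v_0)$ yields $||f(f_k(w))||\ge||f(u_0)||+||f(v_0)||$ with all level-$>k$ glue still intact. Only then does one pass to $u=h(au_0a^{-1})$, $v=h(v_0)$, using the homomorphism $\tilde h_k$ that sends each $x_i,y_i$ with $i\le k$ to $h(x_i)$ and fixes higher levels: being a letter-to-letter substitution it cannot increase the cancellation norm, and (since in the application coming from Lemma \ref{uv-lemma} the elements $U_i,V_i$ depend only on the parity of $i$, which $h$ preserves) $\tilde h_k(f(u_0))=f(h(u_0))=f(u)$. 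This is exactly the point you flagged as delicate and rejected in its naive form without supplying a replacement. A minor further correction: nontriviality of $v$ has nothing to do with $w$ not being a power of $x_0x_1^{\pm1}$ (that hypothesis belongs to Lemma \ref{cancel-out-lemma}); it follows because $u=h(au_0a^{-1})$ has odd length, hence is a nonzero power of $x_1$ (resp.\ $x_0$), so $v=1$ would force $w$ to be conjugate to a power of $x_1$ (resp.\ $x_0$), i.e.\ exactly the already-excluded alternative (c) (resp.\ (b)).
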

\begin{proof}
Without loss of generality, we may assume $w\in\{x_0, x_0^{-1},x_1,x_1^{-1}\}$, then $f(f_k(w))$ equals $f(T_k)^{-1}f(w)$ or $f(w)f(T_k)$, and there are $4m$ positions at which the letters are in $\{x_k,x_k^{-1},y_k,y_k^{-1}\}$ in a cyclically reduced word in $S$ representing the conjugacy class of $f(f_{k}(w))$. Let $\mathcal{F}$ be a folding of a cyclically reduced word in $S$ representing the conjugacy class of $f(f_{k}(w))$ with the number of unpaired positions equal to $||f(f_{k}(w))||$. By Lemma $\ref{many-unpaired-position-lemma}$, there are at least one letter in $\{x_k,x_k^{-1},y_k,y_k^{-1}\}$ not at the unpaired positions in $\mathcal{F}$, or the statement (a) is true.

Now assume the statement (a) does not hold.

Let $a$ be a letter in $\{x_k,x_k^{-1},y_k,y_k^{-1}\}$ at position $j_1$ paired with $a^{-1}$ at position $j_2$ in the folding $\mathcal{F}$. By taking a conjugation, we can rewrite the cyclically reduced word representing the conjugacy class of $f_{k}(w)$ as $a u_0 a^{-1}v_0$ where the letters $a$ and $a^{-1}$ correspond to the letters at position $j_1$ and $j_2$ in the word $f(a u_0 a^{-1}v_0)=U_{k}a V_{k}\cdot f(u_0)\cdot V_{k}^{-1} a^{-1} U_{k}^{-1}\cdot f(v_0)$ representing the conjugacy class of $f(f_{k}(w))$ and $u_0,v_0\in F_{k}$ without loss of generality. Moreover, we can assume that $u_0$ is generated by elements in $\{x_k,x_k^{-1},y_k,y_k^{-1}\}$ and represented by a word of odd length. Then, $h(v_0)=1$ implies that $w$ is a conjugate of $x_0$ or $x_0^{-1}$ if $k$ is even, and that $w$ is a conjugate of $x_1$ or $x_1^{-1}$ if $k$ is odd. So the statement (b) or (c) holds or we have $h(v_0)\neq 1$. 

Define $u$ (resp., $v$) to be $h(a u_0 a^{-1})\in W$ (resp., $h(v_0)\in W$), then $u$ is also represented by a word of odd length, $v\neq 1$ and $u v$ is a conjugate of $h(f_k(w))=w$. The folding $\mathcal{F}$ induces a folding $\mathcal{F}_1$ of the word $V_{k}f(u_0)V_{k}^{-1}$ and a folding $\mathcal{F}_2$ of the word $U_{k}U_{k}^{-1} f(v_0)$ by pairing the corresponding pairs in $\mathcal{F}$. Since the sum of the numbers of unpaired positions in $\mathcal{F}_1$ and $\mathcal{F}_2$ is equal to the number of unpaired positions in $\mathcal{F}$, the cancellation norm $||f(f_{k}(w))||$ is at least $||f(u_0)||+||f(v_0)||$. Define the group homomorphisms $\tilde{h}_j:F\rightarrow F$ by $$\tilde{h}_j(a_i)=\begin{cases}h(a_i) &\mbox{, if }i\le j\\a_i &\mbox{, otherwise}\end{cases}$$ for $a_i\in\{x_i,y_i\}$. Then we have \begin{align*}||f(f_k(w))||&\ge ||f(u_0)||+||f(v_0)||\\&\ge ||\tilde{h}(f(u_0))||+||\tilde{h}(f(v_0))||\\&=||f(h(u_0))||+||f(h(v_0))||\\&=||f(u)||+||f(v)||.\end{align*} So at least one of the statements (b), (c) and (d) is true. 
\end{proof}
\begin{lemma}\label{multi-letter-lemma}
Suppose that $k\ge 2$ is an integer and $w$ is an element in $W$ whose cyclic length is at least two. Let $f:F_{k}\rightarrow F$ be a group homomorphism such that $f(a_i)=U_i a_i V_i$ where $U_i, V_i$ are generated by $\{x_j,y_j|k<j \}$ with $h(U_i)=h(V_i)=1$ for each $0\le i\le k$ and $a_i\in \{x_i,y_i\}$. Then at least one of the following statements holds:
\begin{enumerate}[(a)]
\item $f(f_k(w))$ and $f(w)$ are conjugate;
\item There does not exist two consecutive $x_0$'s or $x_0^{-1}$'s in any cyclic reduced word representing the conjugacy class of $w$, and there exists two consecutive $x_1$'s or $x_1^{-1}$'s in a cyclic reduced word representing the conjugacy class of $w$ and $k$ is even;
\item There does not exist two consecutive $x_1$'s or $x_1^{-1}$'s in any cyclic reduced word representing the conjugacy class of $w$, and there exists two consecutive $x_0$'s or $x_0^{-1}$'s in a cyclic reduced word representing the conjugacy class of $w$ and $k$ is odd;
\item $||f(f_{k}(w))||\ge ||f(w)||+4m$;
\item There exist two nontrivial elements $u,v\in W$ such that the elements $uv$ and $w$ are conjugate and $||f(f_{k}(w))||\ge||f(u)||+||f(v)||$;
\item $||f(f_{k}(w))||\ge ||f(w)||+2$ and $w$ is represented by a word of even length.
\end{enumerate}
\end{lemma}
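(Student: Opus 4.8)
The plan is to imitate the proof of Lemma~\ref{single-letter-lemma}, carrying along the extra bookkeeping that the multiple syllables of $w$ create, and to dispose of the degenerate outcomes using the new alternatives (a), (b), (c), (f).

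First I would fix a cyclically reduced word $s_1\cdots s_n$ over $\{x_0^{\pm1},x_1^{\pm1}\}$ with $n\ge2$ representing the conjugacy class of $w$, so that the conjugacy class of $f(f_k(w))$ is represented by a cyclic product of the blocks $f(s_1),\dots,f(s_n)$ together with one copy of $f(T_k)^{\pm1}$ inserted next to each $s_i$, on the side and with the sign read off from the definition of $f_k$ (which depends only on $s_i$ and the parity of $k$). The first dichotomy is whether all these inserted copies cancel. They do exactly when $w$ is conjugate to $(x_0x_1^{-1})^t$ ($k$ even) or to $(x_0x_1)^t$ ($k$ odd), by Lemma~\ref{cancel-out-lemma}, in which case $f(f_k(w))$ and $f(w)$ are conjugate and we are in case (a). So assume (a) fails; then a cyclically reduced word $W$ for the conjugacy class of $f(f_k(w))$ contains at least one full, uncancelled copy of $f(T_k)^{\pm1}$, hence at least $4m$ letters from $\{x_k^{\pm1},y_k^{\pm1}\}$.

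Next I would pick a folding $\mathcal{F}$ of $W$ with $||f(f_k(w))||$ unpaired positions and let $t$ count the unpaired positions of $\mathcal{F}$ carrying a letter of $\{x_k^{\pm1},y_k^{\pm1}\}$; Lemma~\ref{many-unpaired-position-lemma} gives $||f(f_k(w))||\ge||f(w)||+t$. If $t\ge4m$ we are in case (d). Otherwise $\mathcal{F}$ pairs some letter $a\in\{x_k^{\pm1},y_k^{\pm1}\}$ with $a^{-1}$, and, rotating exactly as in Lemma~\ref{single-letter-lemma}, the conjugacy class of $f_k(w)$ can be written $a\,u_0\,a^{-1}\,v_0$ so that $||f(f_k(w))||\ge||f(u_0)||+||f(v_0)||$. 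Setting $u=h(au_0a^{-1})$ and $v=h(v_0)$, both in $W$, we get $uv$ conjugate to $h(f_k(w))=w$ and, using the projections $\tilde{h}_j$ from the proof of Lemma~\ref{single-letter-lemma}, $||f(f_k(w))||\ge||f(h(u_0))||+||f(h(v_0))||=||f(u)||+||f(v)||$. If $u\ne1$ and $v\ne1$ this is case (e).

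The step I expect to be the main obstacle is ruling out the degenerate possibility that no available matched pair yields a nontrivial splitting. The useful observation is that a matched pair lying inside a single copy of $f(T_k)$ joins two occurrences of the \emph{same} generator, so its two positions have equal parity, their index sum is even, and the resulting $h(u_0)$ is an odd power of $x_0$ (for $k$ even; of $x_1$ for $k$ odd) — hence $u\ne1$ automatically for such pairs. One then checks that $v=1$ can occur only when $w$ is conjugate to a nonzero power of $x_0$ (when $k$ is even) or of $x_1$ (when $k$ is odd): otherwise a letter of the other generator is trapped in $v_0$. This leaves only $w$ conjugate to $x_0^{\,r}$ or $x_1^{\,r}$ with $|r|\ge2$. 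Such a $w$ has two consecutive occurrences of one generator and none of the other, and all its inserted copies of $f(T_k)$ have the same sign; so: when the parity of $k$ matches the pattern forced by (b) or (c), those cases apply directly; when $r$ is even, one shows a surviving same-sign block cannot be folded below norm $2$ and its central letters cannot be fully absorbed into the other same-sign blocks, so $t\ge2$ and case (f) applies; and when $k$ is matched to the generator and $r$ is odd, one shows the optimal folding cannot concentrate every matched pair of a surviving block at the single exponent-balancing index sum without leaving more than $4m$ central positions unpaired in total — contradicting $t<4m$ — so a nondegenerate matched pair exists and case (e) holds after all. Assembling the cases, at least one of (a)--(f) always holds.
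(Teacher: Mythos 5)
Your opening moves coincide with the paper's: fix a cyclically reduced word $s_1\cdots s_n$, use Lemma~\ref{cancel-out-lemma} to identify alternative (a), use Lemma~\ref{many-unpaired-position-lemma} to get (d) when $t\ge 4m$, and otherwise split the conjugacy class of $f_k(w)$ as $a\,u_0\,a^{-1}v_0$ at a matched pair of $\{x_k^{\pm1},y_k^{\pm1}\}$-letters, pass to $u=h(au_0a^{-1})$, $v=h(v_0)$, and land in (e) when both are nontrivial; your observation that a pair lying \emph{inside one copy} of $f(T_k)$ forces $u$ to be a nonzero odd power of $x_0$ (resp.\ $x_1$) is correct and matches the paper. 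The gap is in the degenerate analysis. Your argument that ``$v=1$ can occur only when $w$ is conjugate to a power of a single generator'' is established only for pairs whose two positions lie in the same copy of $f(T_k)$, yet the matched pair you are handed may join two \emph{different} copies of $f(T_k)^{\pm1}$. For such a cross-copy pair one factor can be trivial even when $w$ is not a power of a single generator: for $k$ even, $v=h(v_0)$ is a product of only \emph{some} of the $s_i$'s together with powers of $x_0$ coming from the two partial blocks, and it collapses to $1$ whenever the $s_i$'s it happens to contain are all $x_0^{\pm1}$, irrespective of the rest of $w$. So the reduction ``this leaves only $w\sim x_0^r$ or $x_1^r$'' does not follow, and your plan produces none of (a)--(f) in the scenario where $t<4m$ and every available pair is cross-copy and degenerate. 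This is precisely where the paper works harder: it uses the failure of (b) and (c) \emph{before} choosing the pair, to select a surviving block flanked by $s_n,s_1$ with either $s_n=s_1$ equal to the parity-matched generator, or $s_n\ne s_1$ and $w$ of even length; then, in the cross-copy subcase, $s_1=s_n$ gives (e), while $s_1\ne s_n$ with a trivial factor forces $u_0,v_0$ to be nontrivial of even length and yields the $+2$ estimate of alternative (f). Your proposal has no route to (f) in this situation at all.

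A second, lesser problem is the residual case you do isolate, $w\sim x_0^{r}$ or $x_1^{r}$ with $|r|\ge 2$ and the parity of $k$ not covered by (b)/(c). There your argument rests on two assertions that are only named, not proved: that for $r$ even a surviving same-sign block forces $t\ge 2$, and that for $r$ odd a folding pairing all block letters would leave more than $4m$ central positions unpaired. Neither is routine, since $x_k^{\pm1},y_k^{\pm1}$-letters may be paired across distinct same-sign copies of $f(T_k)$ in ways constrained only by non-crossing; this configuration is the most delicate part of the whole lemma, and as written your sketch does not discharge it. To repair the proposal you would need to (i) add the cross-copy analysis with the even-length/(f) alternative, organized as in the paper by first choosing the distinguished block using the negations of (b) and (c), and (ii) replace the two counting claims by actual arguments.
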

\begin{proof}
Suppose that the conjugacy class of $w$ is represented by a cyclically reduced word $s_1\cdots s_n$ where the letters $s_1,\cdots,s_n\in \{x_0,x_0^{-1},x_1,x_1^{-1}\}$. If the statement (a) does not hold, then there exists at least one $0\le i\le n-1$ such that the $f(T_{k})$ or $f(T_{k})^{-1}$ lying between $f(s_i)$ and $f(s_{i+1})$ does not cancel out cyclically in $f(f_{k}(w))$, where we define $s_0$ to be $s_n$. If neither (b) nor (c) holds, we can either further assume that $s_i=s_{i+1}=x_0$ or $s_i=s_{i+1}=x_0^{-1}$ when $k$ is odd and $s_i=s_{i+1}=x_1$ or $s_i=s_{i+1}=x_1^{-1}$ when $k$ is even, or further assume that $s_i$ is not equal to $s_{i+1}$ and $w$ is represented by a word of even length. By taking a conjugation, we may assume $i=0$ here without loss of generality. Let $\mathcal{F}$ be a folding of a cyclically reduced word in $S$ representing the conjugacy class of $f(f_{k}(w))$ with the number of unpaired positions equal to $||f(f_{k}(w))||$. By Lemma $\ref{many-unpaired-position-lemma}$, not all letters in $\{x_k,x_k^{-1},y_k, y_k^{-1}\}$ in the $f(T_{k})$ or $f(T_{k})^{-1}$ between $s_0$ and $s_1$ are at unpaired positions in $\mathcal{F}$, or the statement (d) is true.

Now assume none of the statements (a), (b), (c) and (d) is true.

Let $a$ be a letter in $\{x_k,x_k^{-1},y_k,y_k^{-1}\}$ at position $j_1$ paired with $a^{-1}$ at position $j_2$ in the folding $\mathcal{F}$ such that the position $j_1$ in $f(f_k(w))$ is either before $f(s_1)$ or after $f(s_n)$. By taking another conjugation, we can rewrite the cyclically reduced word representing the conjugacy class of $f_k(w)$ as $a u_0 a^{-1}v_0$ where the letters $a$ and $a^{-1}$ correspond to the letters at position $j_1$ and $j_2$ in the word $f(a u_0 a^{-1}v_0)=U_{k}a V_{k}\cdot f(u_0)\cdot V_{k}^{-1} a^{-1} U_{k}^{-1}\cdot f(v_0)$ representing the conjugacy class of $f(f_{k}(w))$ and $u_0,v_0\in F_{k}$ without loss of generality.

Define $u$ (resp., $v$) to be the element $h(a u_0 a^{-1})\in W$ (resp., $h(v_0)\in W$), then $uv$ is a conjugate of $h(f_k(w))=w$. Define the group homomorphism $g_k:F\rightarrow F$ by $$g_k(a_i)=\begin{cases}a_i &\mbox{, if }i\neq k\\1 &\mbox{, otherwise}\end{cases}$$ for each $a_i\in\{x_i,y_i\}$. Then there exists an integer $s$ such that $v$ is a conjugate of $g_k(v_0)x_0^{s}$ and $u$ is a conjugate of $g_k(u_0)x_0^{-s}$ if $k$ is even, and that $v$ is a conjugate of $g_k(v_0)x_1^{s}$ and $u$ is a conjugate of $g_k(u_0)x_1^{-s}$ if $k$ is odd. As in the proof of Lemma $\ref{single-letter-lemma}$, we can show that $||f(f_k(w))||\ge ||f(u_0)||+||f(v_0)||$, $||f(u_0)||\ge ||f(u)||$ and $||f(v_0)||\ge ||f(v)||$.

If the positions $j_1$ and $j_2$ lie in the same $f(T_k)$ or $f(T_k)^{-1}$ in the word $f(f_{k}(w))$, then we can further assume that $u_0$ is generated by $\{x_k,y_k\}$ and represented by a word of odd length. Since $u=h(a u_0 a^{-1})$ and $h$ preserves the parity of the word representing the element, the element $u$ is represented by a word of odd length. Because the reduced word representing $g_k(v_0)$ begins with $s_1$ and ends with $s_n$, we have $v$ is nontrivial. So the statement (e) holds. Otherwise, assume the positions $j_1$ and $j_2$ lie in different $f(T_k)$'s or $f(T_k)^{-1}$'s in the word $f(f_{k}(w))$, then the reduced word representing $g_k(u_0)$ begins with $s_{1}$ and the reduced word representing $g_k(v_0)$ ends with $s_n$. If $s_1=s_n$, then both $u$ and $v$ are nontrivial, so the statement (e) holds. Now we assume $s_1\neq s_n$, then $w$ is represented by a word of even length. Because $uv$ is a conjugate of $w$, at least one of $u$ and $v$ is nontrivial. If both $u$ and $v$ are nontrivial, then the statement (e) holds. Otherwise one of $u$ and $v$ equals $1$ and the other is a conjugate of $w$. Since $u$ and $v$ are represented by words of even length, so are $u_0$ and $v_0$. The elements $u_0$ and $v_0$ are nontrivial, which implies $||u_0||,||v_0||\ge 2$, thus $||f(f_k(w))||\ge ||f(u_0)||+||v_0||\ge ||f(w)||+2$ and the statement (f) holds. So at least one of the statements (e) and (f) holds.
\end{proof}

Now we establish lower bounds for the cancellation norms of certain types of group elements.

\begin{theorem}\label{lower-bound}
Suppose $k$ is a positive integer with $k\le m-1$ and $w$ is a nontrivial element in $W$. Then, the following statements hold:
\begin{enumerate}[(a)]
\item If $w$ is represented by a word of even length, then $$||f^{(2m-2k)}(w)||\ge 2k;$$
\item If $w$ is represented by a word of odd length, then $$||f^{(2m-2k)}(w)||\ge k^2-k+3.$$
\end{enumerate}
\end{theorem}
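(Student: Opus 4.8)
The plan is to prove both lower bounds simultaneously by induction on $k$, mimicking the structure of the upper-bound arguments but reading the technical lemmas \ref{cancel-out-lemma}--\ref{multi-letter-lemma} in the contrapositive direction. For the base case $k=1$ we must show $||f^{(2m-2)}(w)||\ge 2$ when $w$ has even length and $||f^{(2m-2)}(w)||\ge 3$ when $w$ has odd length; since $w$ is a nontrivial element of $W=\langle x_0,x_1\rangle$ and $f^{(2m-2)}$ is an injective endomorphism whose image involves the generators $x_j,y_j$ for $j\ge 2m-1$ genuinely (each $T_j$ contributes uncancellable letters), one checks directly that $f^{(2m-2)}(w)$ is not a conjugate of any single $x_i^{\pm 1}$ — in fact its cyclic word contains letters in $\{x_{2m-1}^{\pm},y_{2m-1}^{\pm}\}$ that cannot all be folded — giving norm at least $2$, and a parity/length obstruction (an odd-length element of $W$ cannot have norm $2$ once one tracks which letters survive folding) bumps this to $3$ in the odd case.

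For the inductive step, fix $k$ with $2\le k\le m-1$, write $f=f^{(2m-2k+2)}$ and let $j=2m-2k+1$ so that $f^{(2m-2k)}=f\circ f_{j+1}\circ f_j$, and note that by Lemma \ref{uv-lemma} the composite $f\circ f_{j+1}$ has exactly the form $a_i\mapsto U_ia_iV_i$ required by Lemmas \ref{cancel-out-lemma}--\ref{multi-letter-lemma} applied with $k$ there equal to our $j$. The strategy is: given nontrivial $w\in W$, apply Lemma \ref{single-letter-lemma} if $w$ has cyclic length one and Lemma \ref{multi-letter-lemma} if it has cyclic length at least two, and examine each alternative. The "gain by $4m$" alternatives (and the similar estimates) immediately finish, since $4m$ dwarfs the increments $2$ and $k^2-k+3-((k-1)^2-(k-1)+3)=2k-1$ we need. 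The "split as $uv$" alternatives reduce to two applications of the inductive hypothesis on strictly shorter words; here the parity bookkeeping matters: when $w$ has odd length exactly one of $u,v$ is odd, so the inductive bounds give $||f(u)||+||f(v)||\ge (k-1)^2-(k-1)+3 + 2(k-1)$, and a short computation shows $(k-1)^2-(k-1)+3+2(k-1)=k^2-k+3$, closing the odd case; when $w$ has even length both $u,v$ have the same parity and either both-even gives $2(k-1)\cdot 2\ge 2k$ or both-odd gives $2((k-1)^2-(k-1)+3)\ge 2k$. The "conjugate" alternatives (a) of Lemma \ref{multi-letter-lemma} and (a) of Lemma \ref{cancel-out-lemma}, and the exceptional alternatives (b),(c) of the lemmas, are precisely the cases $w\sim (x_0x_1^{\pm})^t$, $w\sim x_0^{\pm}$, $w\sim x_1^{\pm}$ paired with the appropriate parity of $j$; these must be handled by descending all the way or by a separate direct estimate, and I expect the twist $T_i$ was engineered exactly so that these few residual patterns still accumulate a $T$-letter at each of the roughly $k$ remaining levels, yielding the linear-in-$k$ (even case) and quadratic-in-$k$ (odd case) bounds.

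The main obstacle will be the bookkeeping around the exceptional elements in clauses (b)/(c) of Lemmas \ref{single-letter-lemma} and \ref{multi-letter-lemma} and clauses (a)/(b) of Lemma \ref{cancel-out-lemma}: these are the cases where one level of $f_j$ does \emph{not} add a guaranteed gain, so one cannot simply induct but must either invoke Lemma \ref{many-unpaired-position-lemma} to still extract a $+t$ with $t\ge 1$ from surviving level-$j$ letters, or peel off two levels at once (using that $j$ and $j+1$ have opposite parity, so an element exceptional at level $j$ is non-exceptional at level $j+1$). Making that two-level peel interact correctly with the $U_i,V_i$ decomposition from Lemma \ref{uv-lemma} — in particular checking that after stripping the outermost $f_{j+1}\circ f_j$ one still has a homomorphism of the form demanded by the lemmas with the next smaller index — is the delicate point; everything else is arithmetic verification that the quadratic $k^2-k+3$ and linear $2k$ targets are compatible with the recursions $\phi(k)\ge\phi(k-1)+2k-1$ and $\psi(k)\ge\psi(k-1)+2$ respectively, together with the cross-relation that splitting an odd word costs one odd and one even piece.
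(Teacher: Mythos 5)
Your outline coincides with the paper's strategy: induction on $k$, applying Lemmas \ref{single-letter-lemma} and \ref{multi-letter-lemma} (via the form supplied by Lemma \ref{uv-lemma}) at the two consecutive levels $2m-2k+1$ and $2m-2k+2$, with exactly the arithmetic you give for the ``$+4m$'', ``$+2$'' and splitting alternatives (odd $=$ odd $+$ even gives $(k^2-3k+5)+(2k-2)=k^2-k+3$; even gives $\min\{2k,4k-4,2k^2-6k+10\}=2k$). But the part you leave to ``descending all the way or a separate direct estimate, and I expect the twist $T_i$ was engineered so that these residual patterns still accumulate a $T$-letter at each remaining level'' is a genuine gap, and the speculative route you commit to there is not the one that works: in the exceptional configurations (e.g.\ $w$ conjugate to $x_0^{\pm1}$ with the level even, or $w\sim(x_0x_1^{-1})^t$ with the level even) nothing forces a surviving level-$j$ letter to be unpaired, so Lemma \ref{many-unpaired-position-lemma} only yields $t\ge 0$, not the $t\ge 1$ per level you hope for, and ``accumulating one letter per level'' could at best give a linear bound, never $k^2-k+3$. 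The mechanism that closes these cases is the one you only float as an alternative: an element exceptional at level $j$ is non-exceptional at level $j+1$ (by Lemma \ref{cancel-out-lemma} and the parity built into clauses (b)/(c)), so a good alternative holds at one of the two levels, and one then runs the same minimum computation there.

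To make that two-level argument (and your splitting step) rigorous you need one tool you never name: the retraction $g_j$ killing $x_j,y_j$, which satisfies $g_j\circ f^{(j)}\circ f_j=f^{(j)}$ on $W$ and hence $\|f^{(j-1)}(w_0)\|\ge\|f^{(j)}(w_0)\|$. This is what lets you discard the level-$(2m-2k+1)$ twist without losing norm when the good alternative only occurs at level $2m-2k+2$, and it is also needed to convert the lemma's conclusion, which bounds $\|f^{(2m-2k+1)}(u)\|+\|f^{(2m-2k+1)}(v)\|$, into $\|f^{(2m-2k+2)}(u)\|+\|f^{(2m-2k+2)}(v)\|$ before the inductive hypothesis applies (you write $\|f(u)\|+\|f(v)\|$ with $f=f^{(2m-2k+2)}$ as if the lemma gave that directly). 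The point you flag as delicate---that the stripped homomorphism still has the form $a_i\mapsto U_ia_iV_i$---is in fact immediate from Lemma \ref{uv-lemma}; the real work is the monotonicity above plus the exceptional-case parity analysis. Finally, your base case is shakier than needed: it is false that the cyclic word of $f^{(2m-2)}(w)$ always contains level-$(2m-1)$ letters (for $w=x_0x_1$ all $T_{2m-1}$'s cancel), and the paper avoids any such direct computation by treating $k=1$ inside the same case analysis, using only $\|f^{(2m)}(w_0)\|=\|w_0\|\ge 1$ for nontrivial $w_0$ and $\ge 2$ in the even case, which yields $\min\{3,2\}=2$ and $\min\{1+4m,3\}=3$. (Minor slip: the odd-case increment is $2k-2$, not $2k-1$; harmless.)
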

\begin{proof}
We prove the theorem by induction on $k$. Consider the case where $k=k'$ ($1\le k'\le m-1$). If $k\ge 2$, we also assume the theorem is true for $k=k'-1$. 

Define the group homomorphism $g_j:F\rightarrow F$ by $$g_j(a_i)=\begin{cases}a_i &\mbox{, if }i\neq j\\1 &\mbox{, otherwise}\end{cases}$$ for each nonnegative integer $j$ and $a_i\in\{x_i,y_i\}$. By the properties of the group homomorphism $g_j$, we have $||f^{(j-1)}(w_0)||\ge ||g_j\circ f^{(j-1)}(w_0)||=||f^{(j)}(w_0)||$ for any $w_0\in F$ and any integer $j$ with $3\le j\le 2m$. 

By Lemma $\ref{uv-lemma}$, there exist group elements $U'_i, V'_i,U''_i,V''_i$ generated by $\{x_j,y_j|i<j\le 2m\}$ with $h(U'_i)=h(V'_i)=h(U''_i)=h(V''_i)=1$ such that $$f^{(i)}(a_j)= \begin{cases}U'_i a_j V'_i&\mbox{, if $j$ is even}\\ U''_i a_j V''_i&\mbox{, if $j$ is odd} \end{cases}$$ for each integer $j$ with $0\le j\le i$ and $a_j\in\{x_j,y_j\}$. For each $0\le j \le i$ and $a_j\in\{x_j,y_j\}$, by taking $$(U_j,V_j)=\begin{cases}(U'_i,V'_i)&\mbox{, if $j$ is even}\\(U''_i,V''_i)&\mbox{, if $j$ is odd} \end{cases}$$ we have $f^{(i)}(a_j)=U_j a_j V_j$. Therefore, we can apply Lemma $\ref{cancel-out-lemma}$,  Lemma $\ref{single-letter-lemma}$ and Lemma $\ref{multi-letter-lemma}$ to the integer $i$ and the group homomorphism $f^{(i)}$ for any integer $i$ with $2\le i\le 2m$. In particular, we can apply these lemmas to the integer $2m-2k+1$ and the group homomorphism $f^{(2m-2k+1)}$, and the integer $2m-2k+2$ and the group homomorphism $f^{(2m-2k+2)}$. If $w$ is a conjugate of $x_0$, $x_0^{-1}$, $x_1$ or $x_1^{-1}$ and the statement (c) in Lemma $\ref{single-letter-lemma}$ holds for $2m-2k+1$ and $f^{(2m-2k+1)}$, then the statement (b) in Lemma $\ref{single-letter-lemma}$ does not hold for $2m-2k+2$ and $f^{(2m-2k+2)}$. So in this case, the statement (a) or (d) in Lemma $\ref{single-letter-lemma}$ holds for $2m-2k+1$ and $f^{(2m-2k+1)}$ or for $2m-2k+2$ and $f^{(2m-2k+2)}$. If the cyclic length of $w$ is at least two, and the statement (a) or (c) in Lemma $\ref{multi-letter-lemma}$ holds for $2m-2k+1$ and $f^{(2m-2k+1)}$, then by Lemma $\ref{cancel-out-lemma}$, the statements (a) and (b) in Lemma $\ref{multi-letter-lemma}$ do not hold for $2m-2k+2$ and $f^{(2m-2k+2)}$. The converse is true for the same reason. So in this case, the statement (d), (e) or (f) in Lemma $\ref{multi-letter-lemma}$ holds for $2m-2k+1$ and $f^{(2m-2k+1)}$ or for $2m-2k+2$ and $f^{(2m-2k+2)}$.


In case $k\ge 2$, by inductive hypothesis, we know $||f^{(2m-2k+2)}(w_0)||\ge 2k-2$ for any nontrivial element $w_0\in W$ represented by a word of even length, and $||f^{(2m-2k+2)}(w_0)||\ge k^2-3k+5$ for any element $w_0\in W$ represented by a word of odd length.

For the rest of the proof, let $\min_{(u,v)}$ denote the minimum over all pairs of nontrivial elements in $W$ such that $uv$ is a conjugate of $w$.

If $w$ is represented by a word of even length, and the statement (e) or (f) in Lemma $\ref{multi-letter-lemma}$ holds for $2m-2k+1$ and $f^{(2m-2k+1)}$, then
\begin{align*} &|| f^{(2m-2k)}(w)||\\=&||f^{(2m-2k+1)}\circ f_{2m-2k+1}(w)||\\\ge&\min\{ ||f^{(2m-2k+1)}(w)||+2, \min_{(u,v)}\{||f^{(2m-2k+1)}(u)||+||f^{(2m-2k+1)}(v)||\} \}\\\ge& \min\{||f^{(2m-2k)}(w)||+2,\min_{(u,v)}\{||f^{(2m-2k)}(u)||+||f^{(2m-2k)}(v)||\}\}\\ \ge& \begin{cases}\min\{3,2\}&\mbox{, if }k=1\\\min\{2k,4k-4,2k^2-6k+10\}&\mbox{, if }k\ge 2\end{cases}\\=&2k. \end{align*} If $w$ is represented by a word of even length, and the statement (e) or (f) in Lemma $\ref{multi-letter-lemma}$ holds for $2m-2k+2$ and $f^{(2m-2k+2)}$, then 
\begin{align*} &||f^{(2m-2k)}(w)||\\\ge&||f^{(2m-2k+2)}\circ f_{2m-2k+2}(w)||\\\ge&||f^{(2m-2k+1)}||\\=&\min\{ ||f^{(2m-2k+2)}(w)||+2, \min_{(u,v)}\{||f^{(2m-2k+2)}(u)||+||f^{(2m-2k+2)}(v)||\} \}\\ \ge& \begin{cases}\min\{3,2\}&\mbox{, if }k=1\\\min\{2k,4k-4,2k^2-6k+10\}&\mbox{, if }k\ge 2\end{cases}\\=&2k.\end{align*} Therefore the part (a) of the theorem is true for $k=k'$.

If $w$ is represented by a word of odd length, and the statement (d) or (e) in Lemma $\ref{multi-letter-lemma}$ holds for $2m-2k+1$ and $f^{(2m-2k+1)}$ in case the cyclic length of $w$ is greater than $1$, or the statement (a) or (d) in Lemma $\ref{multi-letter-lemma}$ holds for $2m-2k+1$ and $f^{(2m-2k+1)}$ in case $w$ is a conjugate of $x_0$, $x_0^{-1}$, $x_1$ or $x_1^{-1}$, then
\begin{align*} &||f^{(2m-2k)}(w)||\\=&||f^{(2m-2k+1)}\circ f_{2m-2k+1}(w)||\\\ge&\min\{ ||f^{(2m-2k+1)}(w)||+4m, \min_{(u,v)}\{||f^{(2m-2k+1)}(u)||+||f^{(2m-2k+1)}(v)||\} \}\\\ge& \min\{||f^{(2m-2k+2)}(w)||+4m,\min_{(u,v)}\{||f^{(2m-2k+2)}(u)||+||f^{(2m-2k+2)}(v)||\}\}\\ \ge& \begin{cases}\min\{1+4m,3\}&\mbox{, if }k=1\\\min\{k^2-3k+5+4m,k^2-k+3\}&\mbox{, if }k\ge 2\end{cases}\\=&k^2-k+3. \end{align*} If $w$ is represented by a word of odd length, and the statement (d) or (e) in Lemma $\ref{multi-letter-lemma}$ holds for $2m-2k+2$ and $f^{(2m-2k+2)}$ in case the cyclic length of $w$ is greater than $1$, or the statement (a) or (d) in Lemma $\ref{multi-letter-lemma}$ holds for $2m-2k+2$ and $f^{(2m-2k+2)}$ in case $w$ is a conjugate of $x_0$, $x_0^{-1}$, $x_1$ or $x_1^{-1}$, then
\begin{align*} &||f^{(2m-2k)}(w)||\\\ge&||f^{(2m-2k+1)}(w)||\\=&||f^{(2m-2k+2)}\circ f_{2m-2k+2}(w)||\\\ge&\min\{ ||f^{(2m-2k+2)}(w)||+4m, \min_{(u,v)}\{||f^{(2m-2k+2)}(u)||+||f^{(2m-2k+2)}(v)||\} \}\\ \ge& \begin{cases}\min\{1+4m,3\}&\mbox{, if }k=1\\\min\{k^2-3k+5+4m,k^2-k+3\}&\mbox{, if }k\ge 2\end{cases}\\=&k^2-k+3. \end{align*} Therefore the part (b) of the theorem is true for $k=k'$ and we are done.

\end{proof}
By Theorem $\ref{lower-bound}$, we have $||\overline{w}||= ||f_{2m}\circ \cdots \circ f_3(x_0)||\ge m^2-3m+5$ and $||\overline{w}^2||=||f_{2m}\circ \cdots \circ f_3(x_0^2)||\ge 2m-2$.

\section{Reversed Inequality}

In this section, we prove that there exists a positive constant $c_1$ such that
for sufficiently large integer $m$, if a group element $w$ meets the conditions of Theorem \ref{main}, then it cannot be represented by a word of length at most $c_1^m$. Thus, the minimum length of the word representing such an element $w$ must be between 
$c_1^m$ and $m^{c_2m}$ for positive constants $c_1$ and $c_2$ and sufficiently large integer $m$, though the tighter bounds are currently unknown.

\begin{theorem}\label{reversed}
Given a set $T$ and a function $\mathrm{wt}:T\rightarrow \mathbb{R}^+\cup \{0\}$. Let $F_T$ be the group freely generated by $T$. For a group element $w\in F_T$ represented by a word of length at most $2^m$, we have
$$||w||\le \frac{m+1}{2} ||w^2||$$
where $||\cdot||$ denotes the weighted cancellation norm.
\end{theorem}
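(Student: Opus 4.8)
The plan is to argue by induction on $m$, using the folding description of the weighted cancellation norm. For $m=0$ the word $w$ has length at most $1$: if $w$ is empty both sides vanish, and if $w$ is a single letter $\ell$ then, since $\ell\neq\mathrm{inv}(\ell)$, neither $w$ nor $ww$ admits any folding, so $||w||=\mathrm{wt}(\ell)$, $||w^2||=2\,\mathrm{wt}(\ell)$, and the claim holds with equality. For the inductive step I may assume $2^{m-1}<n\le 2^m$, where $n$ is the length of the chosen word $w=l_1\cdots l_n$ (if $n\le 2^{m-1}$ the stronger bound $||w||\le\tfrac m2||w^2||$ already follows from the inductive hypothesis applied to $w$ itself). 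Take an optimal folding $\mathcal F$ of the word $w^2=l_1\cdots l_{2n}$ (with $l_{n+i}=l_i$). Call a pair of $\mathcal F$ \emph{crossing} if one position lies in $\{1,\dots,n\}$ and the other in $\{n+1,\dots,2n\}$. Non-crossing forces the crossing pairs to be nested: they are $\{p_1,n+q_s\}\supset\cdots\supset\{p_s,n+q_1\}$ with $p_1<\cdots<p_s$ and $q_1<\cdots<q_s$ in $\{1,\dots,n\}$ and $l_{p_t}=\mathrm{inv}(l_{q_{s+1-t}})$ for every $t$, and no other pair of $\mathcal F$ straddles any $p_t$ (inside copy $1$) or any $n+q_t$ (inside copy $2$). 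Hence $\mathcal F$ restricts to foldings of the gaps $A_0,\dots,A_s$ of $w$ cut out by the $p_t$ (where $w=A_0\,l_{p_1}A_1\cdots l_{p_s}A_s$) and of the gaps $B_0,\dots,B_s$ cut out by the $q_t$, giving $||w^2||=\mathrm{cost}(\mathcal F)\ge\sum_r||A_r||+\sum_r||B_r||$. Conversely, optimal foldings of the $A_r$ together with an optimal folding of the subword $h:=l_{p_1}\cdots l_{p_s}$, its arcs transported to the positions $p_1,\dots,p_s$ of $w$ (compatible with the $A_r$-foldings by nesting), form a folding of $w$, so $||w||\le\sum_r||A_r||+||h||$; symmetrically $||w||\le\sum_r||B_r||+||\bar h||=\sum_r||B_r||+||h||$, using that the reverse-inverse word $\bar h$ has the same norm (Theorem~\ref{norm-defining-property}). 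Adding, I obtain the key inequality
\[
2\,||w||\ \le\ ||w^2||+2\,||h||,
\]
in which $h$ is a subword of $w$ of length $s$.

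It then suffices to prove $||h||\le\tfrac m2||w^2||$, which I intend to get from the inductive hypothesis applied to $h$. Two points must be arranged. First, I need $s\le 2^{m-1}$: since $||\cdot||$ is invariant under cyclic rotation of the representing word (Theorem~\ref{norm-defining-property}), the above construction may be carried out after re-cutting $w^2$ at any of the $n$ positions, and the same non-crossing chord diagram $\mathcal F$ remains optimal for every cut. For the cut at position $c$ the number of crossing pairs equals the number of chords of $\mathcal F$ separating the two antipodal gaps at $c$ and $c+n$; summing over $c=1,\dots,n$ gives $\sum_{\text{chords}}\min(\ell,2n-\ell)$ (with $\ell$ the length of a chord), which for a non-crossing chord diagram is at most about $n^2/2$, so some cut yields $s\le\lceil n/2\rceil\le 2^{m-1}$. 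Second, I need $||h^2||\le||w^2||$: here one observes that $h^2$ occurs as the subword of $w^2$ at the positions $\{p_1,\dots,p_s,n+p_1,\dots,n+p_s\}$ and restricts the optimal folding $\mathcal F$ to this position set. Granting both points, the inductive hypothesis gives $||h||\le\tfrac m2||h^2||\le\tfrac m2||w^2||$, and the key inequality yields $2\,||w||\le||w^2||+m\,||w^2||=(m+1)||w^2||$, completing the induction (the case $s=n$, i.e.\ when every letter of $w$ is at a crossing position, is separate and easy: then $l_t=\mathrm{inv}(l_{n+1-t})$, so $w=\bar w$ as a word, $n$ is even, and $||w||=0$).

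The step I expect to be the real obstacle is the inequality $||h^2||\le||w^2||$ — equivalently, that the crossing subword $h$ is no harder to fold twice than $w$ itself. The naive restriction of $\mathcal F$ to the positions of $h^2$ can cost more than $||w^2||$, because a crossing pair $\{p_t,n+q_{s+1-t}\}$ need not have both endpoints among those positions (this happens exactly when $q_{s+1-t}\notin\{p_1,\dots,p_s\}$). Closing this gap will require either a surgery that reroutes the broken crossing pairs inside $h^2$ — exploiting the nested ``rainbow'' structure of the crossing pairs and the identities $l_{p_t}=\mathrm{inv}(l_{q_{s+1-t}})$, which pair up broken endpoints of $h^2$ in complementary patterns — or a preliminary replacement of $\mathcal F$ by an optimal folding chosen to maximize $\#\{t:q_{s+1-t}\in\{p_1,\dots,p_s\}\}$, so that the two families of crossing positions in the two copies of $w$ coincide as far as possible and the restriction to $h^2$ loses nothing. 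The bookkeeping for the re-cutting bound on $s$ in the second paragraph (a purely combinatorial statement about non-crossing chord diagrams) is routine by comparison.
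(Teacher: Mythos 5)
Your skeleton matches the paper's proof quite closely: induction on $m$, an optimal folding $\mathcal{F}$ of $w^2$, the nested family of crossing pairs relative to a cut, an averaging argument over the $n$ cyclic cuts to find a cut with at most $2^{m-1}$ crossing pairs (this is exactly the paper's estimate $\sum_{t}|A_t|=\sum_{\{i,j\}\in\mathcal{F}}\min\{|i-j|,2n-|i-j|\}\le n(n+1)/2$), and a triangle-type reduction of $||w||$ to the norm of the crossing subword. Your inequality $2||w||\le ||w^2||+2||h||$ and the decomposition into gap words $A_r$, $B_r$ are correct. But the proof is not complete, and the gap is exactly the one you flag yourself: the inequality $||h^2||\le||w^2||$ is asserted, not proved. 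Your first suggestion (restrict $\mathcal{F}$ to the positions of $h^2$) fails for the reason you already identify — the $\mathcal{F}$-partner of a crossing letter $l_{p_t}$ sits at position $n+q_{s+1-t}$, which in general is not of the form $n+p_{t'}$, so the restricted diagram is not a folding of $h^2$ — and the proposed repairs (``surgery'' on the broken chords, or choosing $\mathcal{F}$ to align the two families of crossing positions) are left entirely vague; nothing in the nested structure forces such an alignment to exist, so as written this is a missing idea rather than routine bookkeeping.

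The paper closes precisely this step by a different mechanism: it never restricts $\mathcal{F}$ to a position set, but splits $w^2$ and uses subadditivity. In your notation: from $\mathcal{F}$ one builds a folding of $wh^{-1}$ (keep the non-crossing pairs of copy $1$, and pair each $l_{p_t}$ with its mirror letter in $h^{-1}$ — a nested family compatible with the gap foldings) and a folding of $hw$ (keep the non-crossing pairs of copy $2$, and pair the $t$-th letter of $h$ with its actual $\mathcal{F}$-partner $l_{q_{s+1-t}}$). The unpaired costs of these two foldings add up to the cost of $\mathcal{F}$, so $||wh^{-1}||+||hw||\le||w^2||$; then $h^2=(hw^{-1})(wh)$ together with Theorem~\ref{norm-defining-property} (inverse and cyclic invariance) gives $||h^2||\le||hw^{-1}||+||wh||=||wh^{-1}||+||hw||\le||w^2||$. (The paper runs this with the window-side crossing word $v_t$ for the good cut $t$, and in fact gets $||w||\le||v_t||+\tfrac12||w^2||$ directly from the same two quantities, in place of your gap-decomposition inequality; the equality $||w_tv_t^{-1}||+||v_tw_t||=||w^2||$ is the heart of the argument.) So your outline is salvageable, but only after supplying this splitting argument, which is the one genuinely nontrivial step of the theorem and is absent from your proposal.
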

\begin{proof}
We prove the theorem by induction on $m$. The case where $m=0$ is trivial. Now consider the case where $m=m'>0$ assuming the theorem holds for $m=m'-1$.

Let $l_1\cdots l_n$ be a word representing $w$ over the alphabet $S=T\cup T^{-1}$ with $1\le n\le 2^m$, then the word $l_1\cdots l_n\cdot l_1\cdots l_n$ represents $w^2$. Let $\mathcal{F}$ be a folding of $l_1\cdots l_n\cdot l_1\cdots l_n$ with the sum of weights of the letters at unpaired positions equal to $||w^2||$. For each integer $1\le i\le 2n$ which is not an unpaired position in $\mathcal{F}$, define $f(i)$ as the unique integer such that $\{i,f(i)\}\in \mathcal{F}$. 

Suppose $t'$ is an integer with $1\le t'\le n$. Let $A_{t'}$ denote the set of $i$ such that $t'\le i< n+t'$ is not an unpaired position in $\mathcal{F}$ and that $f(i)\le t$ or $f(i)\ge n+t'$. For each $k$ with $1\le k\le n$, let $P_k$ be the set of pairs $\{i,j\}\in\mathcal{F}$ with $\min\{|i-j|, 2n-|i-j| \}\ge k$. Then because the pairs in $\mathcal{F}$ are pairwise not linked, there are at most $n-k+1$ elements in $P_k$. Therefore, we have $\sum_{t'=1}^n|A_{t'}|=\sum_{t'=1}^n \sum_{i\in A_{t'}}1=\sum_{i=1}^{2n}\sum_{i\in A_{t'}}1 =\sum_{\{i,j\}\in\mathcal{F}}\min\{|i-j|, 2n-|i-j| \} =\sum_{k=1}^n |P_k|\le\sum_{k=1}^n(n-k+1)=\frac{n(n+1)}{2}$. So there exists $t$ with $1\le t\le n$ such that $|A_t|\le 2^{m-1}$.

Let $\psi_t: \{1,\cdots |A_t|\}\rightarrow A_t$ denote the unique increasing bijection. Define $w_t=l_t \cdots l_n\cdot l_1\cdots l_{t-1}$ and $v_t=l_{\psi_t(1)}\cdots l_{\psi_t(|A_t|)}$. On one hand, the set $\mathcal{F}_t^+=\{\{i-t+1,j-t+1\}|\{i,j\}\in\mathcal{F}, t\le i\le n+t-1, t\le j\le n+t-1\}\cup\{\{\psi_t(i)-t+1,n+|A_t|+1-i\}|1\le i\le |A_t|\}$ is a folding of the word $l_t \cdots l_n\cdot l_1\cdots l_{t-1}\cdot l_{\psi_t(|A_t|)}^{-1}\cdots l_{\psi_t(1)}^{-1}$ representing $w_t v_t^{-1}$. The sum of weights of the letters at unpaired positions in $\mathcal{F}_t^+$ is equal to the sum of weights of the letters at unpaired positions $i$ with $t\le i< n+t$ in $\mathcal{F}$. On the other hand, let $\phi_t:\{1,\cdots, t-1\}\cup\{n+t,\cdots, 2n\}\rightarrow \{|A_t|+1,\cdots,|A_t|+n\}$ be an auxiliary function defined by $$\phi_t(i)=\begin{cases}|A_t|+n-t+1+i&\mbox{, if }1\le i<t\\|A_t|-n-t+1+i&\mbox{, if }n+t\le i\le 2n.\end{cases}$$Then the set $\mathcal{F}_t^-=\{\{\phi_t(i),\phi_t(j)\}|\{i,j\}\in \mathcal{F}, i,j\in \{1,\cdots, t-1\}\cup\{n+t,\cdots, 2n\} \}\cup\{\{i,\phi_t\circ f\circ \psi_t(i)\}|1\le i\le |A_t|\}$ is a folding of the word $l_{\psi_t(1)}\cdots l_{\psi_t(|A_t|)}\cdot l_t \cdots l_n\cdot l_1\cdots l_{t-1}$ representing $v_tw_t$. The sum of weights of the letters at unpaired positions in $\mathcal{F}_t^-$ is equal to the sum of weights of the letters at unpaired positions $i$ with $i<t$ or $i\ge n+t$ in $\mathcal{F}$. Therefore the sum of weights of the letters at unpaired positions in $\mathcal{F}_t^+$ and $\mathcal{F}_t^-$ is equal to $||w^2||$, which implies $||w^2||=||w_t^2||\le||w_tv_t^{-1}||+||v_t w_t||\le ||w^2||$. Hence $||w_tv_t^{-1}||+||v_t w_t||= ||w^2||$. Because $v_t$ is represented by a word of length $|A_t|\le 2^{m-1}$, by inductive hypothesis, we have $||v_t||\le\frac{m}{2}||v_t^2||$, so \begin{align*}||w||&=||w_t||\\&=\frac{1}{2}(||w_t||-||w_tv_t^{-1}||)+\frac{1}{2}(||w_t||-||v_t w_t||)+\frac{1}{2}||w^2||\\&\le ||v_t||+\frac{1}{2}||w^2||\\&\le \frac{m}{2} ||v_t^2||+\frac{1}{2}||w^2||\\&\le\frac{m}{2}(||v_t w_t^{-1}||+||w_t v_t ||)+||w^2||\\&=\frac{m}{2}||w^2||+||w^2||\\&=  \frac{m+1}{2}||w^2||.\end{align*}
Therefore the theorem is proved for $m=m'$.
\end{proof}

In fact, with a more detailed discussion we can prove that for a word $l_1\cdots l_n$ of length $n\ge 1$ and a folding $\mathcal{F}$ of the word $l_1\cdots l_n\cdot l_1\cdots l_n$, there exists an integer $1\le t \le n$ such that $|A_t|\le \frac{n+2}{3}$, where $A_t$ is defined as in the preceding proof. This allows the extension of Theorem \ref{reversed} to the case where the group element $w$ is represented by a word of length at most $3^m$.

\section*{Acknowledgements}
This research was conducted at (and funded by) the MIT's Undergraduate Research Opportunities Program for summer 2014, under the direction of Larry Guth. I would like to express my very great appreciation to him for introducing me to this problem, for guiding me through the research process, and for offering me valuable revision suggestions.

\end{document}